\theoremstyle{plain}
\newtheorem{theorem}{Theorem}
\newtheorem{lemma}[theorem]{Lemma}
\newtheorem{proposition}[theorem]{Proposition}
\theoremstyle{definition}
\newtheorem{definition}[theorem]{Definition}
\theoremstyle{remark}
\newtheorem{remark}[theorem]{Remark}
\newtheorem{remarks}[theorem]{Remarks}
\newcounter{a}\setcounter{a}{0}
\newcommand{\C}{\textbf{Cond}}
\newenvironment{cond}{\par \refstepcounter{a}  
  {\upshape \textbf{Cond~\thea}:}}{\par}
\newcommand{\ind}[1]{1_{#1}}
\newcommand{\di}[1]{\text{d} #1}
\newcommand{\prob}[1]{\mathbb{P}\left(#1\right)}
\newcommand{\esp}[1]{\mathbb{E}\left[ #1 \right]} 
\newcommand{\eqd}{\stackrel{(d)}{=}}
\newcommand{\borel}[1]{\mathcal{B}\left(#1\right)}
\newcommand{\NN}{\mathbb{N}}
\newcommand{\ZZ}{\mathbb{Z}}
\newcommand{\LL}{\mathbb{L}}
\newcommand{\RR}{\mathbb{R}}
\newcommand{\ZZL}{{\ZZ/2L\ZZ}}
\newcommand{\tore}[1]{\ZZ / #1 \ZZ}
\newcommand{\cyl}[1]{\mathcal{C}_#1}
\newcommand{\mprob}[1]{\mathcal{P} \left( #1 \right)}
\newcommand{\trans}{\Phi}
\newcommand{\Paths}{\Pi}
\newcommand{\Front}{F}
\newcommand{\BB}{B}
\newcommand{\Bridges}{\mathcal{B}}
\newcommand{\Time}{\mathcal{T}}
\newcommand{\TT}{T}
\newcommand{\TL}{\tau}
\newcommand{\meanspeed}{c}
\newcommand{\edgetime}{\zeta}
\newcommand{\card}[1]{\text{card}\left\{ #1 \right\}}
\title{Generalised directed last passage percolation:\\ invariant laws on the cylinders \thanks{This work is supported by the ANR/FNS-16-CE93-0003 grant MALIN.}}
\author{Jérôme Casse \thanks{CEREMADE, CNRS UMR 7534, PSE research university, Université Paris-Dauphine, Place de Lattre de Tassigny, 75775 Paris 16, France. Email: jerome.casse@dauphine.psl.eu.}}
\begin{document}

\maketitle

% \begin{figure}
%   \includegraphics[width = \textwidth]{../Simulation/Cylindre/simulationR.eps}
% \caption{The propagation of the front line in the integrable generalised LPP where $\mu_0$ is the Poisson law of parameter $1$. The cylinder is of size $100$. Each line corresponds to a time multiple of $100$. The upper line (between blue and red) is the front line at time $100$, the one behind (between red and green) is at time $200$, etc. A goal of this article is to define a generalisation of LPP and to give integrable conditions under which we are able to compute the asymptotic speed of the front line as well as the law of its asymptotic shape.}
% \end{figure}

\tableofcontents

\newpage
\begin{abstract}
  The directed last passage percolation (LPP) on the quarter-plane is a growing model. To come into the growing set, a cell needs that the cells on its bottom and on its left to be in the growing set, and then to wait a random time.
  
  We present here a generalisation of directed last passage percolation (GLPP). In GLPP, the waiting time of a cell depends on the difference of the coming times of its bottom and left cells. We explain in this article the physical meaning of this generalisation.

  In this first work on GLPP, we study them as a growing model on the cylinders rather than on the quarter-plane, the eighth-plane or the half-plane. We focus, mainly, on the law of the front line. In particular, we prove, in some integrable cases, that this law could be given explicitly as a function of the parameters of the model.

  These new results are obtained by the use of probabilistic cellular automata (PCA) to study LPP and GLPP.
\end{abstract}

\emph{Key words:} Last passage percolation, integrable (or exactly solvable) models, probabilistic cellular automata\par
\emph{MSC Classes:} 60K35, 82B23\par

\begin{figure}[h]
  \begin{center}
    \includegraphics[width = 0.5\textwidth]{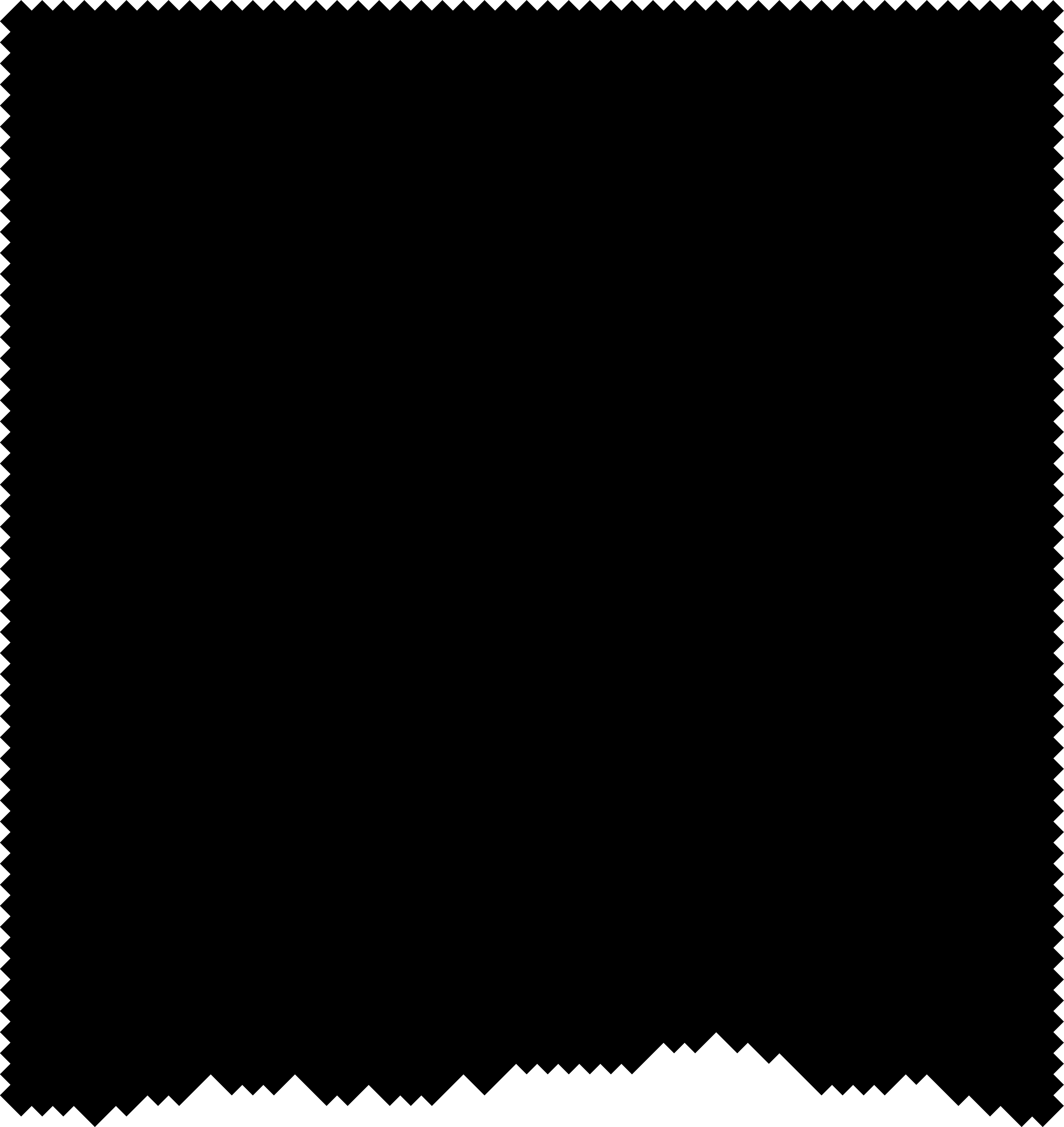}
  \end{center}
  \caption{The growing set (in black) of the LPP on the cylinder of size $50$ at time $200$. The set is growing down. Our main object of study here is the asymptotic law of the bottom line and the asymptotic height of the growing set when the time goes to infinity.}
  % \caption{The propagation of the front line in the integrable generalised LPP where $\mu_0$ is the Poisson law of parameter $1$. The cylinder is of size $100$. Each line corresponds to a time multiple of $100$. The upper line (between blue and red) is the front line at time $100$, the one behind (between red and green) is at time $200$, etc. A goal of this article is to define a generalisation of LPP and to give integrable conditions under which we are able to compute the asymptotic speed of the front line as well as the law of its asymptotic shape.}
\end{figure}

\section{Introduction}
In the preamble of this article, we fix the following notations: $\NN = \{0,1,\dots\}$, $\NN^* = \{1,2 , \dots\}$, $\RR_+ = [0,\infty)$ and $\RR_+^* = (0,\infty)$.
\paragraph{Last Passage Percolation (LPP):}
The \emph{directed Last Passage Percolation} (LPP) is a random lattice growth model. It has been introduced by Rost on the quarter-plane~\cite{Rost81}. To any vertex $z = (x,y) \in \NN^2$, we associate a random variable $\xi_{z}$. The $(\xi_{z})_{z \in \NN^2}$ are i.i.d.\ and distributed according to a probability measure $\mu$ on $\NN^*$ or $\RR_+^*$. Now, to any vertex $z \in \NN^2$, we associate the value
\begin{equation}
  \TL(z) = \max_{\pi = (z_0,\dots,z_k) \in \Paths(z)} \sum_{l=0}^{k} \xi_{z_l}
\end{equation}
where $\Paths(z)$ is the set of directed paths from $(0,0)$ to $z$, that is:
\begin{equation}
  \Paths(z) = \left\{ (z_0,\dots,z_k) \in (\NN^2)^{k+1} : \forall i < k, z_{i+1}-z_i \in  \{(1,0), (0,1)\}, z_0 = (0,0), z_k = z \right\}.
\end{equation}

Another way to define $\TL$ is by induction: for any $(x,y) \in \NN^2$,
\begin{equation}
  \TL((x,y)) = \max(\TL((x-1,y)),\TL((x,y-1))) + \xi_{(x,y)}
\end{equation}
with $\TL((-1,y)) = \TL((x,-1)) = 0$ for any $x,y \in \NN$.

\begin{remarks} \label{rem:banalite}
  \begin{itemize}
    \item Other names for the LPP on the quarter-plane are \emph{corner growth} models, point-to-point LPP, full-space LPP and it is very related to the PolyNuclear Growth (PNG) model and the TASEP (parallel or not). See~\cite{Martin06,Seppalainen09,Rost81,CEP96,BR00,PS02,Johansson00,Baik03,Johansson03,Ferrari04,SI04-1,SI04-2,SI05,BDMMZ01,BFP10,Johansson19,Johansson19-4,JR19} and~\cite[Chapter 2]{OccelliT19} for many references on those models. They are also related to the Hammersley lines models~\cite{AD95,Seppalainen97,CG05,CG06,BEGG16}.
    \item In the previous definition, taking ``$\min$'' instead of ``$\max$'' defines the \emph{directed First Passage Percolation} (FPP) on the quarter-plane.
    \item If we associate the random variables $\xi$ to edges instead of vertices, we define another model of LPP, called directed edge-LPP. In Section~\ref{sec:edge-classical}, we give some few more details about it.
    \end{itemize} 
\end{remarks}

\begin{remark}[Physical meaning of LPP] \label{rem:physsense}
  In our mind, the directed First Passage Percolation represents the time needed for a piece of ground to be wet when it starts raining at time $0$. And, in the directed Last Passage Percolation, the time $\TL(z)$ is the time needed for the piece of ground $z$ to be dry when it stops raining at time $0$. Indeed, to be dry, the piece of ground $(x,y)$ needs both pieces of ground $(x-1,y)$ and $(x,y-1)$ to be dry and, then, waits a random time $\xi_{(x,y)}$ to become dry.
\end{remark}

An interesting object in that model is the increasing sequence of sets of vertices
\begin{equation}
  \left(C_t = \{z \in \NN^2 : \TL(z) \leq t\}\right)_{t \in \NN \text{ or } \RR_+}.
\end{equation}

Throughout this article, we denote by $\mprob{\NN^*}$ the set of probability measures on $\NN^*$ whose support is $\NN^*$, i.e.\ if $\mu \in \mprob{\NN^*}$, then, for any $i \in \NN^*$, $\mu(i) \neq 0$.
\begin{theorem}[see~{\cite[Proposition~2.1]{Martin06}},~{\cite[Theorem~2.1]{Seppalainen09}}] \label{thm:classicalQuarter}
For any $\mu \in \mprob{\NN^*}$, there exists a deterministic function $f_\mu: (0,\infty)^2 \to [0,\infty]$ such that, for all $(x,y) \in (0,\infty)^2$,
\begin{equation}
  f_\mu(x,y) = \lim_{n \to \infty} \frac{\TL((\lfloor nx \rfloor,\lfloor ny \rfloor))}{n}  \text{ a.s.}
\end{equation}
Either $f_\mu = \infty$ or $f_\mu < \infty$ on all of $(0,\infty)^2$. In the latter case, $f_\mu$ is superadditive, concave, continuous, homogeneous, and symmetric on $(0,\infty)^2$. $f_\mu$ is non decreasing on both arguments.
\end{theorem}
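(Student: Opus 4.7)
The plan is to invoke Kingman's superadditive ergodic theorem along rational rays to construct $f_\mu$ on $\mathbb{Q}_{>0}^2$, read off its structural properties there, extend it to $(0,\infty)^2$ by the automatic continuity of concave functions on open convex sets, and finally upgrade the almost-sure convergence from rational to arbitrary $(x,y)$ via a sandwich argument.

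First, fix $(a,b) \in (\NN^*)^2$ and, for $0 \leq m \leq n$, let $Y_{m,n}$ denote the LPP time from $(ma,mb)$ to $(na,nb)$ computed with the weights $(\xi_z)$. Decomposing any path from $(0,0)$ to $(pa,pb)$ at the intermediate vertex $(na,nb)$ gives the superadditivity $Y_{0,p} \geq Y_{0,n} + Y_{n,p}$, while the i.i.d.\ structure of the weights makes $(Y_{m,n})$ stationary under $m \mapsto m+1$ and ergodic (it is driven by a Bernoulli shift). Kingman's superadditive ergodic theorem then yields a deterministic $\gamma(a,b) \in [0,\infty]$ with
\[
  \lim_{n \to \infty} \frac{\TL((na,nb))}{n} = \gamma(a,b) \quad \text{a.s.}
\]
The identity $\gamma(ka,kb) = k\gamma(a,b)$ is immediate from the definition, so setting $f_\mu(a/k,b/k) := \gamma(a,b)/k$ produces a well-defined function on $\mathbb{Q}_{>0}^2$. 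Symmetry then follows from the measure-preserving diagonal reflection of the environment; monotonicity from the inclusion $\Paths(z) \subset \Paths(z')$ when $z \leq z'$ coordinate-wise; and superadditivity $f_\mu(x+x',y+y') \geq f_\mu(x,y) + f_\mu(x',y')$ by concatenating, inside $n(x+x',y+y')$, a first path from $(0,0)$ to $n(x,y)$ with a path using the independent weights beyond $n(x,y)$ that is equidistributed with the analogous path from $(0,0)$ to $n(x',y')$. Together with $1$-homogeneity, this superadditivity is equivalent to concavity.

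The dichotomy then comes out of the two-sided bound
\[
  \min(x,y)\, f_\mu(1,1) \leq f_\mu(x,y) \leq \max(x,y)\, f_\mu(1,1),
\]
valid on $\mathbb{Q}_{>0}^2$ by monotonicity and homogeneity: the lower bound forces $f_\mu \equiv \infty$ as soon as $f_\mu(1,1) = \infty$, while the upper bound forces $f_\mu$ to be finite everywhere as soon as $f_\mu(1,1) < \infty$. In the finite case, concavity on the dense subset $\mathbb{Q}_{>0}^2$ of the open convex cone $(0,\infty)^2$ yields automatic local boundedness and hence a unique continuous extension to $(0,\infty)^2$, which inherits homogeneity, symmetry, concavity, superadditivity, and monotonicity from the rational restriction.

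The main obstacle is upgrading the almost-sure convergence from rational to arbitrary $(x,y)$ so that it holds on a single event of full measure. The remedy is a sandwich using the endpoint monotonicity of $\TL$: given $\epsilon > 0$, by continuity of $f_\mu$ one picks rationals $(x^-,y^-) \leq (x,y) \leq (x^+,y^+)$ with $|f_\mu(x^\pm,y^\pm) - f_\mu(x,y)| < \epsilon$, and then
\[
  \frac{\TL((\lfloor nx^- \rfloor,\lfloor ny^- \rfloor))}{n} \leq \frac{\TL((\lfloor nx \rfloor,\lfloor ny \rfloor))}{n} \leq \frac{\TL((\lfloor nx^+ \rfloor,\lfloor ny^+ \rfloor))}{n}.
\]
The two outer quantities converge almost surely to $f_\mu(x^\pm,y^\pm)$, and taking $\epsilon \downarrow 0$ along a countable family of rational envelopes pins down the limit $f_\mu(x,y)$ for every $(x,y)$ simultaneously on one event of full probability.
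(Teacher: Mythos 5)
Your proposal is correct and follows exactly the route the paper points to: the paper does not prove this theorem (it is a classical result cited from Martin and Sepp\"al\"ainen), but it remarks that the proof uses Kingman's superadditive ergodic theorem, which is precisely the engine of your argument, and your radial construction on $\mathbb{Q}_{>0}^2$, the derivation of homogeneity/superadditivity/symmetry/monotonicity, and the sandwich argument upgrading from rational to arbitrary directions are all standard and sound. One phrasing to tighten: rational concavity on a dense subset of an open convex set does \emph{not} by itself yield local boundedness (there are pathological rationally-affine counterexamples built from Hamel bases), but you do not actually need that implication, since your two-sided bound $\min(x,y)\,f_\mu(1,1) \le f_\mu(x,y) \le \max(x,y)\,f_\mu(1,1)$ already furnishes local boundedness on $\mathbb{Q}_{>0}^2$, and it is rational concavity \emph{together with} this bound that produces the unique continuous concave extension to $(0,\infty)^2$.
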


Proof of this theorem is done by using a (superadditive version) of the Kingman's subadditive ergodic theorem~\cite{Kingman73}. The value of $f_\mu$ is not explicit, except when $\mu$ is a geometrical law or an exponential law. In that case,
\begin{proposition}[\cite{Rost81},~\cite{CEP96}] \label{thm:LPPint}
\begin{itemize}
  \item $f_{\mu}(x,y) = (\sqrt{x}+\sqrt{y})^2$ if $\mu$ is an exponential law of parameter $1$, and
%  and $\displaystyle f_{\mu}(x,y) = \frac{x + 2 \sqrt{xy(1-p)} + y}{p}$ if $\mu$ is a geometrical law of success parameter $p$ on $\NN$ (i.e. $\mu(i) = p (1-p)^i$ for any $i \in \NN$),
  \item $\displaystyle f_{\mu}(x,y) = \frac{(1-p)x + 2 \sqrt{xy(1-p)} + (1-p)y}{p}$ if $\mu$ is a geometrical law of success parameter $p$ on $\NN^*$ (i.e.\ $\mu(i) = p (1-p)^{i-1}$ for any $i \in \NN^*$).
  \end{itemize}
\end{proposition}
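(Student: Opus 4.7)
The plan is to use the stationary LPP method, originating in Rost's work for the exponential case and extended by Cohn--Elkies--Propp for the geometric case. The idea is to modify the boundary data so that the model becomes translation invariant in a strong sense, compute the growth rate in the stationary model by an elementary expectation computation, and then optimise over a free boundary parameter to recover the bulk rate $f_\mu(x,y)$ guaranteed by Theorem~\ref{thm:classicalQuarter}.

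Concretely, for the exponential case I would fix $\rho \in (0,1)$ and replace the boundary conditions $\TL((-1,y)) = \TL((x,-1)) = 0$ by independent increments: $\TL((x,0))-\TL((x-1,0)) \sim \text{Exp}(\rho)$ i.i.d.\ along the horizontal axis and $\TL((0,y))-\TL((0,y-1)) \sim \text{Exp}(1-\rho)$ i.i.d.\ along the vertical axis, independent of the bulk weights $\xi_z \sim \text{Exp}(1)$. A direct computation exploiting memorylessness together with the identity $\min(\text{Exp}(a),\text{Exp}(b))\sim\text{Exp}(a+b)$ (Burke's theorem for LPP) shows that in this augmented model the horizontal increments $\TL((x,y))-\TL((x-1,y))$ remain i.i.d.\ $\text{Exp}(\rho)$ on every row and the vertical increments remain i.i.d.\ $\text{Exp}(1-\rho)$ on every column. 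Summing along a staircase path gives $\esp{\TL^{(\rho)}((x,y))} = x/\rho + y/(1-\rho)$, and since the boundary-loaded process dominates the original one (the added increments are nonnegative), dividing by $n$ and letting $n\to\infty$ yields the upper bound
\[ f_\mu(x,y) \le \inf_{\rho \in (0,1)} \left( \frac{x}{\rho} + \frac{y}{1-\rho} \right) = \left(\sqrt{x}+\sqrt{y}\right)^2, \]
the optimum being attained at $\rho^\star = \sqrt{x}/(\sqrt{x}+\sqrt{y})$. The geometric case is treated by the same recipe, with geometric boundary increments and a discrete analogue of Burke's identity; solving the matching condition on the two axis parameters that makes the max-plus recursion preserve the product geometric law and minimising in the free parameter produces the announced explicit expression.

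For the matching lower bound I would use a coupling/second-class particle argument: one shows that, when travelling in direction $(x,y)$ with $x,y>0$, the optimal path in the stationary model stays in the bulk with probability tending to $1$, so the boundary contribution becomes asymptotically negligible and the bulk growth rate $f_\mu(x,y)$ must equal the optimum of the stationary expression. Equivalently, one invokes the variational characterisation of $f_\mu$ as a Legendre dual of a convex function built from the increment law and checks saturation at $\rho^\star$.

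The main obstacle is precisely the lower bound: the upper bound is bookkeeping once Burke's identity is proved (a short calculation in the exponential case, but more delicate in the geometric one because of ties and because the matching condition between boundary and bulk parameters must be solved in a non-degenerate regime), whereas matching it requires the ergodic input above, which is where the bulk of the technical work sits.
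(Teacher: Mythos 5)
The paper does not prove Proposition~\ref{thm:LPPint}; it imports it from the cited references. Rost~\cite{Rost81} obtains the exponential-case shape $(\sqrt x+\sqrt y)^2$ as the hydrodynamic limit of TASEP, and Cohn--Elkies--Propp~\cite{CEP96} obtain the geometric-case shape from the arctic-circle analysis of random domino tilings of the Aztec diamond. Your proposal follows instead the stationary-LPP (Burke property) route of Seppäläinen and Balázs--Cator--Seppäläinen --- essentially the argument in the Seppäläinen lecture notes the paper also cites for Theorem~\ref{thm:classicalQuarter}. So the route you take is legitimate and by now the standard textbook one, but it is genuinely different from either of the proofs in the paper's references. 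What it buys is a purely probabilistic, exactly-solvable-free argument that also prepares the ground for exit-point/variance estimates; what it costs is that it only works precisely when the increment law admits a Burke identity, which is the same ``integrable'' dichotomy (geometric/exponential) the paper is organised around.

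The upper-bound half of your sketch is correct as written. Given bulk $\mathrm{Exp}(1)$ weights and boundary increments $\mathrm{Exp}(\rho)$ horizontally and $\mathrm{Exp}(1-\rho)$ vertically, the memorylessness plus $\min(\mathrm{Exp}(a),\mathrm{Exp}(b))\sim\mathrm{Exp}(a+b)$ identity propagates i.i.d.\ exponential increments to every row and column, hence $\esp{\TL^{(\rho)}((nx,ny))}=nx/\rho+ny/(1-\rho)$; domination of $\TL$ by $\TL^{(\rho)}$ and Kingman $L^1$-convergence give $f_\mu(x,y)\le\inf_{\rho\in(0,1)}\bigl(x/\rho+y/(1-\rho)\bigr)=(\sqrt x+\sqrt y)^2$ with optimiser $\rho^\star=\sqrt x/(\sqrt x+\sqrt y)$. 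The geometric case is the discrete analogue, with geometric boundary increments whose parameters are tied to $p$ by a matching relation; ties in the discrete recursion make the Burke identity a bit more delicate but do not change the scheme.

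The lower bound is where the sketch is both thin and slightly misphrased. It is not that ``the optimal path stays in the bulk with probability tending to $1$'': in the stationary model the geodesic always enters through the boundary, and off the characteristic direction it does so macroscopically. What one actually needs is that \emph{at the characteristic parameter} $\rho^\star$ the exit point of the geodesic from the boundary is $o(n)$, so that after dividing by $n$ the boundary contribution vanishes and the stationary growth rate is forced to coincide with $f_\mu(x,y)$. Establishing this requires a real input --- either exit-point variance/large-deviation bounds, or a convex-duality argument exploiting the concavity and homogeneity of $f_\mu$ guaranteed by Theorem~\ref{thm:classicalQuarter} together with the family of upper bounds over all $\rho$ --- and without that input the lower bound is circular, since identifying $\rho^\star$ as ``the direction where the exit point is sublinear'' presupposes knowing $f_\mu$. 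Naming this as the hard step is correct; the description of the step as offered would not survive being made precise.
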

See also~\cite{Martin06,Seppalainen09} and~\cite[Chapter 2]{OccelliT19}, but be careful there is sometimes confusion about the result concerning the geometrical law on the literature. Indeed, if we take $\mu$ to be a geometrical law of success parameter $p$ on $\NN$ [so $\left(\xi_z\right)_{z \in \NN^2}$ are now random variables on $\NN$ instead of $\NN^*$] (i.e.\ $\mu(i) = p (1-p)^i$ for any $i \in \NN$), then
\begin{equation}
  f_{\mu}(x,y) = \frac{x + 2 \sqrt{xy(1-p)} + y}{p}.
\end{equation}

In the sequel, we will refer to these explicit cases as ``integrable LPP''.\par

Besides,
% to the explicit $f_\mu$ that is a Law of Large Number (LLN) for this model,
the fluctuations around these explicit values have been studied, as well as the multipoint distribution~\cite{Johansson00,Johansson03,Johansson19,Johansson19-4,JR19,BDMMZ01,BFP10,BL16,BL18,Liu18,BL19-7,BL19-12,PS02}. They are related to the GUE Tracy-Widom distribution and $\text{Airy}_2$ processes. Hence, the LPP is in the KPZ (Kardar-Parisi-Zhang) universality class~\cite{Johansson00,PS02}. For many more details about KPZ universality of the LPP, we refer the interested reader to~\cite{KPZ86,Quastel11,Corwin12,Hairer13,Corwin18,OccelliT19} and references therein.\par

In the following of this article, we consider only discrete time (the support of $\mu$ is $\NN^*$) to get simpler mathematical expressions of ideas and formulas, and also to clarify the discussions. The case where the support is $\RR_+^*$ is done in Section~\ref{sec:cont}. The ideas are the same, but with more technical details.\par

\paragraph{Probabilistic Cellular Automata (PCA):}
The main new idea that leads to this article is the observation that LPP are related to Probabilistic Cellular Automata (PCA).\par
A PCA is a quadruplet $(E,\LL,N,T)$ where
\begin{itemize}
\item $E$ is a discrete space, 
\item $\LL$ is a discrete lattice,
\item $N = (z_1,\dots,z_n)$ is a finite subset of $\LL$,
\item $\TT = (\TT(s_1,\dots,s_{|N|};t))_{s_1,\dots,s_{|N|},t \in E}$ is a transition matrix from $E^{|N|}$ to $E$, meaning that $\TT$ satisfies the two following conditions:
  \begin{itemize}
  \item for any $\displaystyle s_1,\dots,s_{|N|},t \in E$, $\displaystyle \TT(s_1,\dots,s_{|N|};t) \in [0,1]$ and
  \item for any $\displaystyle s_1,\dots,s_{|N|} \in E$, $\displaystyle \sum_{t \in E} \TT(s_1,\dots,s_{|N|};t) = 1$.
  \end{itemize}
\end{itemize}
Each of this quadruplet $(E,\LL,N,T)$ allows to define a stochastic dynamic on $E^\LL$ in the following way: for any $s = (s_z)_{z \in \LL} \in E^{\LL}$, for any finite subset $L \subset \LL$, the probability that the image $U = (U_z)_{z \in \LL}$ of $s$ on $L$ by the dynamic is, for any $(u_z)_{z \in L} \in E^{L}$,
\begin{equation}
  \prob{(U_z = u_z)_{z \in L} | s} = \prod_{z \in L} \TT(s_{z+z_1},\dots,s_{z+z_N};u_z). 
\end{equation}
Hence, we know all the finite-dimensional laws of the random variable $U$ and so, by Kolmogorov's extension theorem, the law of $U$ itself. The random variable $U$ is then the image of $s$ by the stochastic dynamic associated to the PCA $A$, shorted in ``$U$ is the image of $s$ by $A$'' in the sequel. Moreover, $s$ could be a random variable of law $\phi$ on $E^\LL$, then $U$, the image of $s$ by $A$, is a random variable of law $\psi$ on $E^\LL$. Another point of view on the random dynamic associated to $A$ is to see it as a deterministic dynamic on the set of probability measures on $E^\LL$ that maps $\phi$ to $\psi$.
\medskip

Now, for any $\mu \in \mprob{\NN^*}$, we define $A_{\mu}$ the PCA where $E = \ZZ$, $\LL= \ZZ$, $N = \{0,1\}$, and $\TT_\mu$ is defined by: for any $s,t,u \in \ZZ$,
\begin{equation}
  \TT_\mu(s,t;u) = \mu(u - \max(s,t)).
\end{equation}

The first observation that leads to this article is:
\begin{lemma}
Let $(\TL(z))_{z \in \NN}$ be a LPP of parameter $\mu$, then, for any $(x,y) \in \NN^2$, for any $s,t,u \in \NN$,
\begin{equation}
  \prob{\TL((x,y)) = u | \TL((x-1,y)) =s, \TL((x,y-1)) = t} = \TT_\mu(s,t;u).
\end{equation}
\end{lemma}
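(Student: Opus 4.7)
The plan is to derive the statement directly from the inductive definition of the LPP given in the excerpt, namely
\begin{equation*}
\TL((x,y)) = \max\bigl(\TL((x-1,y)),\TL((x,y-1))\bigr) + \xi_{(x,y)}.
\end{equation*}
The boundary conventions $\TL((-1,y)) = \TL((x,-1)) = 0$ are irrelevant here since conditioning on their values yields the same formula.

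First I would observe that $\TL((x-1,y))$ and $\TL((x,y-1))$ are measurable functions of the family $\{\xi_z : z \in R\}$, where
\begin{equation*}
R = \{(a,b) \in \NN^2 : (a,b) \neq (x,y),\ a \leq x,\ b \leq y\},
\end{equation*}
simply because every directed path from $(0,0)$ to $(x-1,y)$ or to $(x,y-1)$ stays in $R$. Since the $(\xi_z)_{z \in \NN^2}$ are i.i.d., the variable $\xi_{(x,y)}$ is independent of the $\sigma$-algebra generated by $\{\xi_z : z \in R\}$, and in particular independent of the pair $(\TL((x-1,y)),\TL((x,y-1)))$.

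Combining this independence with the inductive identity, conditionally on $\{\TL((x-1,y)) = s,\ \TL((x,y-1)) = t\}$ the random variable $\TL((x,y))$ has the same law as $\max(s,t) + \xi_{(x,y)}$ with $\xi_{(x,y)} \sim \mu$. Consequently
\begin{equation*}
\prob{\TL((x,y)) = u \,\big|\, \TL((x-1,y)) = s,\ \TL((x,y-1)) = t} = \mu\bigl(u - \max(s,t)\bigr) = \TT_\mu(s,t;u),
\end{equation*}
which is the claim.

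There is no real obstacle in this argument; the only delicate point is checking the independence of $\xi_{(x,y)}$ from the conditioning random variables, which I would state carefully by pointing out that $\TL((x-1,y))$ and $\TL((x,y-1))$ only use the $\xi_z$'s strictly before $(x,y)$ in the partial order on $\NN^2$. Once this is granted, the rest is a one-line computation from the recursive formula.
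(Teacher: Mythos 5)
Your proof is correct and complete. The paper itself states this lemma as an ``observation'' without supplying any argument, so there is no proof in the paper to compare against; but the argument you give is the natural (and essentially the only) one: identify that $\TL((x-1,y))$ and $\TL((x,y-1))$ are $\sigma(\xi_z : z \in R)$-measurable where $R = \{(a,b) \neq (x,y) : a \leq x, b \leq y\}$, use i.i.d.-ness to get independence of $\xi_{(x,y)}$ from that $\sigma$-algebra, and then read off the conditional law from the recursion $\TL((x,y)) = \max(\TL((x-1,y)),\TL((x,y-1))) + \xi_{(x,y)}$. You were right to flag the independence step as the only delicate point, and you handled it correctly.
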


The second observation is that ``integrable LPP'' correspond to cases where the PCA are \emph{integrable} (a precise definition of integrable PCA in our context is given in Section~\ref{sec:intPCA}). And, the reverse is true, if the PCA is integrable, then the corresponding LPP is integrable.\par

\begin{remark}
  We can also link the directed First Passage Percolation (the same definition than LPP but consider ``min'' instead of ``max'') with parameter $\mu$ and PCA by considering the following transition matrix: for any $s,t,u \in \ZZ$,
  \begin{equation}
    \TT_\mu(s,t;u) = \mu(u - \min(s,t)).
  \end{equation}
  Moreover, by using PCA with memory $2$ as defined in~\cite{CM19}, we can link them to FPP on the triangular lattice. Unfortunately, PCA linked with FPP are not integrable.
\end{remark}

At that point, the idea is to do something similar to what has been done on TASEP in~\cite{CM19}. It is to find integrable PCA that do not model the classical LPP as defined before, but another model that could be seen as a variant/generalisation. Moreover, we want to give, at least in some cases, a physical meaning to this generalisation. Now, we present this new generalisation and its physical meaning.\par

\paragraph{Generalised directed Last Passage Percolation (GLPP):}
Let $(\mu_{\Delta})_{\Delta \in \NN} \in \mprob{\NN^*}^{\NN}$ be a sequence of random probability measures on $\NN^*$. To any vertex $z \in \NN^2$, we attach a sequence of random variables $\xi_{z} = \left( \xi^{(\Delta)}_{z} \right)_{\Delta \in \NN}$ such that, for any $\Delta \in \NN$, $\xi^{(\Delta)}_{z} \sim \mu_\Delta$, and $\left(\xi_{z}\right)_{z \in \NN^2}$ are independent. From this, we define recursively $\TL((x,y))$ by
\begin{itemize}
\item $\TL((0,0)) = \xi^{(0)}_{(0,0)}$,
\item $\TL((x,0)) = \TL((x-1,0)) + \xi^{(\TL((x-1,0)))}_{(x,0)}$,
\item $\TL((0,y)) = \TL((0,y-1)) + \xi^{(\TL((0,y-1)))}_{(0,y)}$,
\item $\TL((x,y)) = \max(\TL((x-1,y)),\TL((x,y-1))) + \xi^{(|\TL((x-1,y)) - \TL((x,y-1))|)}_{(x,y)}$.
\end{itemize}\par
Remark that, in that model, neither the independence of $\left(\xi^{(\Delta)}_z\right)_{\Delta \in \NN}$, neither the identical distribution of $\left(\xi_z\right)_{z \in \NN^2}$ are required.\par

If, for any $\Delta \in \NN$, $\mu_\Delta = \mu_0$, then we obtain the classical LPP on the quarter-plane.\par

\begin{remarks}
  \begin{itemize}
  \item The physical meaning of LPP, as we express in Remark~\ref{rem:physsense}, is preserved and even improved.
    Indeed, in our generalisation, the time $\xi_{(x,y)}$ to dry depends on $|\TL((x-1,y)) - \TL((x,y-1))|$, the difference of drying times of $(x-1,y)$ and $(x,y-1)$. We think that it is more realistic: suppose that $\TL((x-1,y))$ is much bigger than $\TL((x,y-1))$, then, during a time $\Delta = \TL((x,y-1)) - \TL((x-1,y))$, $(x,y)$ receives water only from $(x,y-1)$, so when $(x,y-1)$ is dried, $(x,y)$ has less water that if $\Delta = 0$. This implies that, with this interpretation, $\left(\xi^{(\Delta)}_{(x,y)} \right)_{\Delta \in \NN}$ should be decreasing stochastically in $\Delta$.
  \item The directed edge-LPP can also be viewed as a GLPP, see Lemma~\ref{lem:edge-classical} in Section~\ref{sec:edge-classical}.
\end{itemize}
\end{remarks}

For any $\mu = (\mu_\Delta)_{\Delta \in \NN}$, the GLPP is related to the PCA whose transition matrix $\TT_{\mu}$ is, for any $s,t,u \in \ZZ$,
\begin{equation}
  \TT_{\mu}(s,t;u) = \mu_{|s-t|}(u - \max(s,t)),
\end{equation}
see Lemma~\ref{lem:model} to understand formally this relation. This PCA is integrable (as defined in Section~\ref{sec:intPCA}) if $\mu = \left(\mu_\Delta\right)_{\Delta \in \NN}$ satisfies the following condition
\begin{cond} \label{cond:int}
  for any $\Delta \in \NN$, for any $t \in \NN^*$,
  \begin{equation}
    \mu_\Delta(t) = \frac{\sqrt{\mu_0(t) \mu_0(t+\Delta)}}{\sum_{s \in \NN^*} \sqrt{\mu_0(s) \mu_0(s+\Delta)}}. \label{eq:int}
  \end{equation}
\end{cond}
The denominator is finite (less than $1$) due to Cauchy-Schwarz inequality.\par

\begin{remarks} \label{rem:dim}
  \begin{itemize}
  \item The GLPP is a model parameterised by $\mprob{\NN^*}^{\NN}$, but where "only" $\mprob{\NN^*}$ are integrable. In fact, in the classical case, a similar reduction of the model happens: the classical LPP can be parameterised by any measure $\mu \in \mprob{\NN^*} \simeq [0,1]^{\NN^*}$, but integrability happens when $\mu$ is a geometrical law that could be parameterised by its success parameter, an element of $[0,1]$. Hence, in both cases, ``a power $\NN$ is lost'' between the set of all models and the set of integrable ones.
  \item If the two following conditions hold on the same time: \C~\ref{cond:int} and, for any $\Delta \in \NN$, $\mu_\Delta = \mu_0$, then $\mu_0$ is a geometrical law; and the reverse is true (see Proposition~\ref{lem:classical} on Section~\ref{sec:classical}). Hence, we could not expect an improvement of the integrability conditions of the classical LPP by our methods; but, in the same time, our methods do not forget any ``integrable LPP''.
  \end{itemize} 
\end{remarks}

Our first objective was to generalise Theorem~\ref{thm:LPPint} to this new integrable condition. Unfortunately, for now, we do not succeed. Nevertheless, some simulations and conjectures are given in Section~\ref{sec:exQuarter}.\par
\smallskip
In this article, we are interested in this GLPP, not on the quarter-plane, but on the cylinders.\par
This is not the first time that LPP are not studied on the quarter-plane. In the literature, there are models of LPP on the half-plane (also called LPP line-to-point or Polynuclear Growth Model)~\cite{BR00,Johansson03,PS02,Ferrari04,SI04-1,JR19}, and models of LPP on the eighth-plane (under the name half-plane LPP, they are called half-plane because TASEP related to the LPP are on the half-line, see~\cite{BR01-1,BR01-2,BR01-3,BBNV18,BBCS18,BFO19} and~\cite[Chapter 2]{OccelliT19}. Results about LPP on cylinders can be deduced from the results about TASEP on rings. Recently, many results about the fluctuation of TASEP on rings have been obtained~\cite{BL16,BL18,Liu18,BL19-7,Liu19,BL19-12}.\par
\smallskip
Our aim in this article is to define the GLPP whose dynamics are more complex than the usual ones of LPP models as explained in Remarks~\ref{rem:dim}, and to show that some of them (those that satisfy~\C~\ref{cond:int}) have the potential to be studied as deeply as the LPP with exponential or geometric weights are.  In this first work on the subject, we focus on the front line of GLPP in cylinders. We see in Theorem~\ref{thm:int}~and~Proposition~\ref{prop:int-c} that the invariant probability measures of the front lines have more complex forms that the ones of LPP, see~Propositions~\ref{prop:clasCyl} and~\ref{prop:clasCylR}. 

\paragraph{Content:}
In Section~\ref{sec:LPPcyl}, we define the GLPP (with discrete time) on the cylinders and we express the four main results of this paper: Theorems~\ref{thm:HMC},~\ref{thm:meanspeed},~\ref{thm:int} and~\ref{thm:int-speed}. In Section~\ref{sec:proof}, we prove these four theorems. In Section~\ref{sec:ideas}, we explain how we are able to conjecture Theorem~\ref{thm:int} by using PCA. In Section~\ref{sec:cont}, we treat the continuous case that is when $\left( \mu_{\Delta} \right)_{\Delta \in \RR_+} \in \mprob{\RR_+^*}^{\RR_+}$ is a family of probability measures on $\RR_+^*$. In Section~\ref{sec:examples}, we present how our results on the cylinders apply to classical LPP and directed edge-LPP, and we discuss about the GLPP on the quarter-plane. Finally, in Section~\ref{sec:open}, we express and summarise some open questions on GLPP and some potential directions for future researches.

\section{GLPP on cylinders} \label{sec:LPPcyl}
\subsection{Definition} \label{sec:gLPPdef}
Let $L \in \NN^*$ be an integer and $\mu = (\mu_\Delta)_{\Delta \in \NN} \in \mprob{\NN^*}^{\NN}$ be a sequence of probability measures on $\NN^*$ with full support. The \emph{Generalised directed Last Passage Percolation} (GLPP) on the cylinder of size $L$ with parameter $\mu$ is a growing model on
\begin{equation}
  \cyl{L} = \{ (x,y) : x \in \tore{2L}, y \in \NN : x+y = 0 \mod 2\}
\end{equation}
such that, to each cell $(x,y) \in \cyl{L}$, we associate (by induction) a number $\TL((x,y))$ such that
\begin{align}
  & \TL((2x,0)) = 0 \text{ for any } x \in \tore{L}, \label{eq:LPPinit} \\
  & \TL((x,y))= \max(\,\TL((x-1,y-1))\, , \, \TL((x+1,y-1))\,) + \xi_{(x,y)} \label{eq:LPPdyn}
\end{align}
where $\xi_{(x,y)} \sim \mu_{|\TL(x-1,y-1) - \TL(x+1,y-1)|}$ and $\left( \xi_{(x,y)} \right)_{(x,y) \in \cyl{L}}$ are independent. \par

Our object of study is the curve $\Front_n$ that splits $\{z \in \cyl{L} : \TL(z) \leq n\}$ and $\{z \in \cyl{L}: \TL(z) > n\}$.\par 
In particular, we are interested in the law of $\Front_n$ when $n \to \infty$. For any $n \in \NN$, $\Front_n$ is an element of $\Bridges_L$, the set of bridges of size $2L$ whose steps are $+1$ or $-1$:
\begin{equation}
  \Bridges_L = \left\{ (b_i)_{i \in \ZZL} \in \{-1,+1\}^{\ZZL} : \sum_{i \in \ZZL} b_i =0 \right\}.
\end{equation}
Moreover, we define, for any $n \in \NN$ and any edge $i \in \ZZL$ of $F_n$, $t_{n,i} = n - \TL(z_i)$ where $z_i$ is the face adjacent to the edge $i$ and such that $\TL(z_i) \leq n$. It is denoted by $\tilde{\Front}_n = (\Front_n,(t_{n,i})_{i \in \ZZL})$. This is illustrated in Figure~\ref{fig:TF}. In the following, it is easier to work with $\tilde{F}_n$ than directly with $F_n$ (see Theorem~\ref{thm:HMC} in Section~\ref{sec:ergo}). Hence, many results are stated on $\tilde{\Front}_n$ and then deduced on $\Front_n$.\par

\begin{figure}
  \begin{center}
    \includegraphics{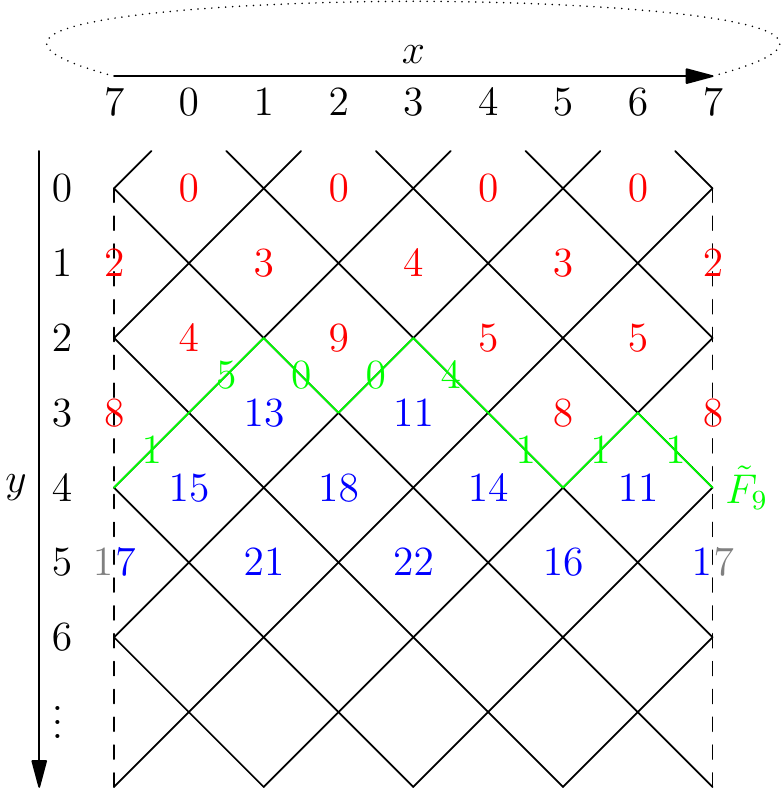}
  \end{center}
  \caption{Example of a LPP with $L=4$. The green line is $\Front_9 = (+1,-1,+1,-1,-1,+1,-1,+1)$ and, with the numbers on edges of $\Front_9$, we obtain $\tilde{\Front_9} = (\Front_9, (5,0,0,4,1,1,1,1))$.} \label{fig:TF}
\end{figure}

Few words about the set $\tilde{\Bridges}_L$ in which the random variable $\tilde{\Front}_n$ takes its values. For any $b \in \Bridges_L$, we define the set
\begin{align} 
\Time_b = \{(t_i & : i \in \ZZL) \in \NN^{\ZZL} : \nonumber \\
& \text{if } b_i = b_{i+1} = 1, \text{then } t_i<t_{i+1}; \nonumber \\
& \text{if } b_i = b_{i+1} = -1, \text{then } t_i>t_{i+1}; \nonumber \\
& \text{if } b_i = -1 \text{ and } b_{i+1} = 1, \text{then } t_i=t_{i+1}\} \label{eq:timesetCont}.
\end{align}
Then, for any $n$, $\tilde{\Front}_n$ is necessary an element of 
\begin{equation}
\tilde{\Bridges}_L = \{(b,t) : b \in \Bridges_L, t\in \Time_b\}.
\end{equation}

We also are interested in the asymptotic mean speed $\meanspeed_L$ of this front line that is
\begin{equation}
  c_L = \lim_{n \to \infty} \frac{\max \left\{y : \esp{\TL\left(\left(\frac{1 - (-1)^y}{2},y\right)\right)} \leq n \right\}}{n}.
\end{equation}
By a change of variable, $c_L$ could be rewritten as
\begin{equation}
  c_L = \lim_{y \to \infty} \frac{y}{\esp{\TL\left(\left(\frac{1 - (-1)^y}{2},y\right)\right)}}.
\end{equation}

For later, in relation to $c_L$, we introduce the notation $\edgetime(e)$ that is the time spend by the edge $e$ into the front line
\begin{equation} \label{eq:edgetime}
  \edgetime(e) = \TL(z'_e) - \TL(z_e),
\end{equation}
where $z'_e$ and $z_e$ are the two faces adjacent to the edge $e$ such that $\TL(z'_e) > \TL(z_e)$.

% This variable is linked with the mean time $\meantime$ of the time spend by an edge into the front line. The time $\edgetime(e)$ spend by an edge $e$ into the front line is
% \begin{equation} \label{eq:edgetime}
%   \edgetime(e) = \TL(z'_e) - \TL(z_e),
% \end{equation}
% where $z'_e$ and $z_e$ are the two faces adjacent to the edge $e$ such that $\TL(z'_e) > \TL(z_e)$. The asymptotic mean time is, then,
% \begin{equation}
%   \meantime = \lim_{y \to \infty} \frac{1}{y} \esp{ \TL\left(\left(\frac{1 - (-1)^y}{2},y\right)\right)} = \frac{1}{c_L}.
% \end{equation}

\begin{remarks}
  \begin{itemize}
  \item Due to invariance by horizontal translation of the model, $c_L$ does not depend on $x$ that's why we have chosen $x=0$ or $x=1$ here. Moreover, $c_L$ exists when $(\tilde{\Front}_n)_{n \in \NN}$ is ergodic, see Theorem~\ref{thm:HMC} in Section~\ref{sec:ergo} for a sufficient condition on $\mu$.
  \item In the definition of the model, we have chosen $\mu_\Delta \in \mprob{\NN^*}$. We could take it in $\mprob{\NN}$ allowing $\mu_\Delta(0) \in (0,1)$: it corresponds, for the growing process, to add several cells in the same time slot if one of them allows another to come. In the following, we do not study this case, even if some of our results apply (we just need to change the definition of $\Time_b$ by allowing equality in the two cases where $b_i=b_{i+1}$). The reason is that it complicates significantly some proofs. In Remark~\ref{rem:nono-proof}, we explain in details the issues of taking $\mu_\Delta(0) \neq 0$.
  \end{itemize}
\end{remarks}

\subsection{Ergodicity of the front line} \label{sec:ergo}
First, the following condition on $\left(\mu_\Delta\right)_{\Delta \in \NN}$ permits to assure the ergodicity of $(\tilde{\Front}_n)_{n \in \NN}$ and so of $(\Front_n)_{n \in \NN}$:
\begin{cond} \label{cond:conv}
  there exists $\alpha> 0$ such that
  \begin{equation}
    \inf_{\Delta \in \NN, t \in \NN^*} \frac{\mu_\Delta(t)}{\sum_{s \geq t} \mu_\Delta(s)} \geq \alpha.
  \end{equation}
\end{cond}

\begin{theorem} \label{thm:HMC}
  For any $\left( \mu_\Delta \right)_{\Delta \in \NN} \in \mprob{\NN^*}^\NN$, $(\tilde{\Front}_n)_{n \in \NN}$ is a Markov chain, and so $(\Front_n)_{n \in \NN}$ is a hidden Markov chain. Moreover, if~\C~\ref{cond:conv} holds, then they are ergodic. 
\end{theorem}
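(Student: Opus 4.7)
Given $\tilde{\Front}_n = (\Front_n, (t_{n,i}))$, at each bridge peak $(+1,-1)$ at positions $(i,i+1)$ there is a unique candidate cell $z$ above, with the two parents being the in-set faces $z_i,z_{i+1}$ adjacent to edges $i$ and $i+1$; both $\Delta_z := |t_{n,i}-t_{n,i+1}|$ and the elapsed waiting time $k_z := \min(t_{n,i}, t_{n,i+1})$ are functions of $\tilde{\Front}_n$ alone. The event ``$z$ is not yet in the growing set at time $n$'' is exactly $\{\xi_z^{(\Delta_z)} > k_z\}$, and by the independence of $(\xi_{z'})_{z'\in\cyl{L}}$ it is the only information the past carries about $\xi_z^{(\Delta_z)}$; hence
\[
  \mathbb{P}\bigl(z \text{ is added at time } n+1 \bigm| \tilde{\Front}_n\bigr) = \frac{\mu_{\Delta_z}(k_z+1)}{\sum_{s > k_z}\mu_{\Delta_z}(s)},
\]
a function of $\tilde{\Front}_n$ only, and conversion events at distinct peaks are conditionally independent given $\tilde{\Front}_n$. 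Once this collection of Bernoulli events is sampled, $\tilde{\Front}_{n+1}$ is determined deterministically: each converted peak $(+1,-1)$ at $(i,i+1)$ becomes a valley $(-1,+1)$ with new ages $t_{n+1,i} = t_{n+1,i+1} = 0$, and every other age is incremented by one. This is the Markov property of $(\tilde{\Front}_n)_{n \in \NN}$; and $(\Front_n)_{n \in \NN}$ is then a hidden Markov chain as a deterministic projection of $(\tilde{\Front}_n)$.

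\textbf{Ergodicity under~\C~\ref{cond:conv}.} \C~\ref{cond:conv} is precisely the statement that the conversion probability above is bounded below by some $\alpha > 0$ uniformly in $(\Delta_z, k_z)$. I would prove ergodicity via the Meyn--Tweedie framework by combining (a) a Foster--Lyapunov drift for $V(\tilde{\Front}) := \max_i t_i$ toward the bounded-age set $C_M := \{\tilde{\Front} : V(\tilde{\Front}) \leq M\}$, with (b) a small-set minorisation on $C_M$. For (a), the combinatorial key is a \emph{peak-propagation} observation: converting a peak at $(j,j+1)$ creates a new peak at $(j-1,j)$ whenever $b_{j-1} = +1$, and similarly at $(j+1,j+2)$ whenever $b_{j+2} = -1$. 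Since every bridge in $\Bridges_L$ contains at least one peak and each monotone run has length at most $L$, any fixed edge can be brought into a peak configuration by at most $L$ successive targeted conversions of conditional probability $\geq \alpha$, after which one further conversion (probability $\geq \alpha$) resets its age. The age at each edge is thus stochastically dominated by a geometric random variable, and $V$ has a negative drift outside $C_M$ for $M$ large enough. For (b), I would iterate the ``every peak converts'' event---of probability $\geq \alpha^L$ at each step, and essentially the only non-trivial event whose probability \C~\ref{cond:conv} can lower-bound uniformly---sufficiently many times to drive two copies of the chain into a common configuration, yielding the needed minorisation.

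\textbf{Main obstacle.} The minorisation step is the delicate one: since \C~\ref{cond:conv} controls hazard rates only from below, the probability that a chosen peak \emph{fails} to convert cannot be lower-bounded uniformly, so one cannot freely sculpt the successor state by mixing conversions and non-conversions. The argument must therefore rely solely on ``convert-all'' events and on a careful analysis of the deterministic ``flip every peak'' map on $\Bridges_L$, showing that its orbits bring any two initial conditions into a common configuration within bounded time (up to matching finite age corrections). Once this synchronisation and the drift estimate are in place, standard results of Meyn and Tweedie give positive Harris recurrence of $(\tilde{\Front}_n)$, and hence the claimed ergodicity of both $(\tilde{\Front}_n)$ and its projection $(\Front_n)$.
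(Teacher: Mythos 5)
Your treatment of the Markov property is essentially identical to the paper's: you identify the per-peak conversion probability as a hazard ratio depending only on $(\Delta_z,k_z)$, which are functions of $\tilde{\Front}_n$, and use conditional independence of the conversion events; this is correct.

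For ergodicity, however, your ``main obstacle'' is not actually an obstacle given the paper's standing assumption, and the workaround you propose to circumvent it is genuinely flawed. Recall that $\mprob{\NN^*}$ is defined in the paper to consist of measures with \emph{full support} on $\NN^*$. Consequently, for every peak, $\sum_{s>k}\mu_\Delta(s) \geq \mu_\Delta(k+1) + \mu_\Delta(k+2) > \mu_\Delta(k+1)$, so the non-conversion probability $1 - p_m^\delta$ is strictly positive at every peak, even though \C~\ref{cond:conv} only lower-bounds $p_m^\delta$. For a chain on the countable state space $\tilde{\Bridges}_L$, irreducibility does not need a uniform lower bound on non-conversion: strict positivity suffices to build a positive-probability path from any state to the zigzag state $(((-1)^i)_i, 0^{\ZZL})$ (always convert) and then from the zigzag state to any target (convert or not at each peak, each with positive probability). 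This is exactly what the paper does, using Brémaud's Foster criterion for countable chains rather than the Meyn--Tweedie small-set machinery you invoke, which is overkill here.

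Your proposed replacement — minorisation built purely on ``convert-all'' events and synchronisation under the deterministic flip-every-peak map — does not work. That map has non-trivial periodic orbits: for any $L$, the two zigzags $((+1,-1)^L)$ and $((-1,+1)^L)$ form a $2$-cycle, and other bridges feed into this cycle at different phases, so two copies started at, say, $((+1,-1)^L)$ and $((+1)^L(-1)^L)$ remain out of phase forever under the flip-all map and never merge. Hence no amount of iterating ``every peak converts'' gives a common-state minorisation, and the coupling you sketch fails. Relatedly, your drift argument for $V((b,t)) = \max_i t_i$ is not airtight: after a convert event $V$ need not decrease (ties at the max cause it to increase by one), and your peak-propagation detour is unnecessary because in any $(b,t)\in\tilde{\Bridges}_L$ the argmax of $t$ is \emph{automatically} adjacent to a peak (this is forced by the monotonicity constraints defining $\Time_b$). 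The paper's choice $l((b,t)) = \sum_i t_i$ avoids both issues: outside the refuge the maximal $t_j$ exceeds $2L/\alpha$, that peak converts with probability $\geq\alpha$ and wipes out $t_j + t_{j-1} \geq t_j$, while the sum increases by at most $2L$ otherwise, giving a clean negative drift in one step.
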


The proof of this theorem is done in Section~\ref{sec:proofHMC}.\par
\medskip

\begin{remarks}
  \begin{itemize}
  \item For those who are not familiar with hidden Markov chains: a process $\left(H_n \right)_{n\in \NN}$ is called a \emph{hidden Markov chain} on a set $E$, if there exists $(\tilde{H}_n)_{n \in \NN}$ a Markov chain on a set $\tilde{E}$ and a function $\pi : \tilde{E} \to E$, such that, for any $n \in \NN$, $H_n = \pi(\tilde{H}_n)$. Hence, if $(\tilde{F}_n)_{n \in \NN}$ is a Markov chain, by projection on the first coordinate, $(\Front_n)_{n \in \NN}$ is a hidden Markov chain.\par

  \item \C~\ref{cond:conv} is sufficient to obtain the ergodicity, but probably not optimal. We could expect weaker conditions by finest control on $\mu_\Delta$, in particular, by controlling the behaviour of $\mu_\Delta$ according to $\Delta$.
  \end{itemize}
\end{remarks}

When~\C~\ref{cond:conv} holds, we denote by $\tilde{\nu}_L$ the unique invariant law of the Markov chain $(\tilde{\Front}_n)_{n \in \NN}$ and $\nu_L$ the one of $(\Front_n)_{n \in \NN}$. Obviously, for any $b \in \Bridges_L$,
\begin{equation}
  \nu_L(b) = \sum_{t \in \Time_b} \tilde{\nu}_L((b,t)).
\end{equation}
We also obtain the asymptotic mean speed $\meanspeed_L$ of the front line as a function of $\tilde{\nu}_L$.
\begin{theorem} \label{thm:meanspeed}
  Let $\mu = (\mu_\Delta)_{\Delta \in \NN} \in \mprob{\NN^*}^\NN$ be such that~\C~\ref{cond:conv} holds. We denote by $\tilde{\nu}_L$ the invariant measure of the Markov chain $(\tilde{\Front}_n)_{n \in \NN}$. Let $(B,(T_1,\dots,T_{2L})) \sim \tilde{\nu}_L$. The asymptotic mean speed $\meanspeed_L$ of the front line of the LPP with parameter $\mu$ on the cylinder of size $L$ is
  \begin{equation}
    \meanspeed_L = \frac{1}{\displaystyle 2 \mathbb{E}\left[T_1\right] + 1} = \frac{1}{\displaystyle 1 + 2 \sum_{(b,(t_1,\dots,t_{2L})) \in \tilde{\Bridges}_L} t_1\, \tilde{\nu}_L((b,t))}.
  \end{equation}
\end{theorem}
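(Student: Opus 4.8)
The plan is to exploit the ergodicity from Theorem~\ref{thm:HMC} together with a renewal/height-counting argument, reading off the asymptotic speed from the stationary distribution of the time-labels on the front line. Fix a column, say $x=0$ (or $x=1$ according to parity), and track the successive times $\TL((\ast,0)), \TL((\ast,2)), \dots$ at which the growing set reaches heights $0, 2, 4, \dots$ in that column. By the recursive definition~\eqref{eq:LPPdyn}, the increment $\TL((x,y))-\max(\TL((x-1,y-1)),\TL((x+1,y-1)))$ is exactly $\xi_{(x,y)}$, but this is not directly the increment along a single column; instead, I would express the passage time through the front line via the edge-time quantities $\edgetime(e)$ of~\eqref{eq:edgetime} and the labels $t_{n,i}$. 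The key identity to establish is that, when a given vertical edge of the cylinder is the one carrying label $t_{n,1}=T_1$ in the stationary front $\tilde F_n$, the time for the front to advance ``past'' that column is, on average, $2\mathbb{E}[T_1]+1$ — the ``$+1$'' accounting for the single new cell added and the ``$2$'' for the two diagonal steps (up-left and up-right) that the front must make to clear one unit of vertical height on the cylinder $\cyl L$.

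Concretely, the first step is to set $Y_n = \max\{y : \TL((\frac{1-(-1)^y}{2},y)) \le n\}$, so that $c_L = \lim_n Y_n/n$ by definition, and rewrite this (via the change of variable already noted in the excerpt) as $c_L = \lim_{y\to\infty} y/\mathbb{E}[\TL((\frac{1-(-1)^y}{2},y))]$. The second step is to decompose $\TL((\ast,y))$ as a telescoping sum over the height increments and to identify each increment with a functional of the front line at the relevant time: as the front sweeps upward, the vertex in our fixed column at height $y$ enters the growing set at time $\TL((\ast,y))$, and the ``local waiting time'' between consecutive heights in that column is governed by the labels $t_{n,i}$ at the edges flanking that column. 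The third step is to invoke the ergodic theorem for the Markov chain $(\tilde F_n)_n$ (Theorem~\ref{thm:HMC}) to replace the time-average of these local waiting times by their stationary expectation $\mathbb{E}_{\tilde\nu_L}[\,\cdot\,]$, and a short combinatorial bookkeeping on $\cyl L$ (each height increment of $2$ in a column corresponds to exactly one freshly-added cell plus the obligatory traversal contributing the two edge-labels) yields the denominator $2\mathbb{E}[T_1]+1$. The fourth step is purely notational: expand $\mathbb{E}[T_1]$ against $\tilde\nu_L$ as $\sum_{(b,(t_1,\dots,t_{2L}))\in\tilde\Bridges_L} t_1\,\tilde\nu_L((b,t))$ to get the second displayed form.

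The main obstacle I anticipate is the bookkeeping in the second and third steps: making rigorous the claim that the per-height waiting time in a fixed column, averaged along the trajectory, converges to $2\mathbb{E}[T_1]+1$ and not some other linear functional of $\tilde\nu_L$. One has to be careful that the front-line labels $t_{n,i}$ are defined relative to the moving time $n$ (they are ``ages'' of edges), so relating $\sum$ of labels to an actual elapsed time requires checking that each unit of elapsed time is counted exactly once as the front advances — i.e.\ that the contributions of the two edges adjacent to our column over one vertical step telescope correctly with the single new $\xi$. A clean way to handle this is to write $n$ itself as a sum over the front edges at time $n$ of a ``how long has this edge been here plus how long until it leaves'' type decomposition, using stationarity to equate the forward and backward contributions; alternatively, one can argue directly from $\TL((\ast,y+2)) - \TL((\ast,y))$ being, in distribution under the stationary front, equal to $(t'_1 + t_1 + \xi)$ for the appropriate labels, then take expectations and use $\mathbb{E}[t'_1]=\mathbb{E}[t_1]=\mathbb{E}[T_1]$ by stationarity plus $\mathbb{E}[\xi]$ absorbed into the ``$1$'' via the normalisation of the $\mu_\Delta$. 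Once this identity is pinned down, the rest is immediate from Theorem~\ref{thm:HMC} and the elementary fact that $\lim y/\mathbb{E}[\TL(\ast,y)] = 1/\lim(\mathbb{E}[\TL(\ast,y)]/y)$ when the latter limit exists and is positive.
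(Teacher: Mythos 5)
Your overall skeleton --- fix the column $x\in\{0,1\}$, write $\TL$ at height $y$ as the telescoping sum $\edgetime_0+\dots+\edgetime_{y-1}$ of the edge-times of the successive position-$1$ edges, and use the ergodicity of $(\tilde{\Front}_n)_{n}$ to convert this into a statement about $\tilde{\nu}_L$ --- is the same as the paper's. But the crux of the theorem, namely the identity relating the mean edge lifetime to the stationary mean label, $\esp{\edgetime_0}=2\,\esp{T_1}+1$, is exactly what you never derive, and both substitutes you offer for it are unsound. The paper obtains it from a precise probabilistic fact: under the invariant law, writing $\edgetime_0$ for the total lifetime of the edge currently at position $1$, one has $\prob{t_{0,1}=k,\ \edgetime_0=j}=\prob{t_{0,1}=k+1,\ \edgetime_0=j}$ for $0\le k\le j-2$ (the event is shifted by one time step and the chain is stationary), hence $t_{0,1}$ is uniform on $\{0,\dots,\edgetime_0-1\}$ given $\edgetime_0$, so $\esp{t_{0,1}}=(\esp{\edgetime_0}-1)/2$. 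Your geometric reading of the constants (``$+1$ for the new cell, $2$ for the two diagonal steps'') is not where they come from, and your alternative identity $\TL((\ast,y+2))-\TL((\ast,y))\eqd t_1'+t_1+\xi$ with ``$\esp{\xi}$ absorbed into the $1$ via the normalisation of the $\mu_\Delta$'' is false: normalisation fixes the total mass of $\mu_\Delta$, not its mean, and the increment over two heights is $\edgetime_y+\edgetime_{y+1}$, a sum of two full lifetimes, not a sum of two ages read off at a single fixed time plus one fresh weight.

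The reason the hand-waving is dangerous is precisely the inspection-paradox point you mention but do not resolve: the labels $t_{n,i}$ are ages observed at a fixed (stationary) time $n$, whereas the quantities $\edgetime_y$ entering $c_L$ are lifetimes indexed by renewals along the column; the lifetime of the edge seen at a stationary time is size-biased relative to the per-height sequence $(\edgetime_y)_y$, so ``replace the time-average of the local waiting times by their stationary expectation'' is not a routine invocation of Birkhoff --- it is the entire content of the step, and it has to pass through an explicit identity such as the conditional-uniformity lemma above, which is exactly what the paper's proof supplies before its final line $c_L=1/\esp{\edgetime_0}$. As written, your steps two and three assert the conclusion rather than derive it, so the proposal has a genuine gap at the only nontrivial point of the proof.
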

The proof of this theorem is done in Section~\ref{sec:proofmeanspeed}. Moreover, for this theorem, it is necessary that: for any $\Delta \in \NN$, $\mu_\Delta(0)=0$.\par
\medskip

In the integrable case (when $\mu$ satisfies~\C~\ref{cond:int}), we have an explicit expression of $\tilde{\nu}_L$, and so of $\nu_L$ and $\meanspeed_L$, as a function of $\mu_0$.\par

\subsection{Integrable GLPP}
First, remark that the set of $\mu = \left( \mu_{\Delta} \right)_{\Delta \in \NN} \in \mprob{\NN^*}^{\NN}$ that satisfy~\C~\ref{cond:int} is parameterised by $\mu_0 \in \mprob{\NN^*}$. Indeed, from any $\mu_0 \in \mprob{\NN^*}$, we can define by~\C~\ref{cond:int} a unique sequence $\mu = (\mu_\Delta)_{\Delta \in \NN}$ in $\mprob{\NN^*}^{\NN}$. Hence, in the following, when we study the integrable case, we reduce the set of parameters $\mu = (\mu_\Delta)_{\Delta \in \NN} \in \mprob{\NN^*}^\NN$ to $\mu_0 \in \mprob{\NN^*}$.\par

When~\C~\ref{cond:int} holds, \C~\ref{cond:conv} becomes the following one on $\mu_0$:
  \begin{cond} \label{cond:conv2}
    there exists $\alpha > 0$ such that
    \begin{equation}
      \sup_{t \in \NN^*}\frac{\mu_0(t)}{\sum_{s\geq t} \mu_0(s)} \geq \alpha.
    \end{equation}
  \end{cond}

\begin{lemma} \label{lem:cc2}
  Let $\mu_0 \in \mprob{\NN^*}$ be such that~\C~\ref{cond:conv2} holds. Define $\mu = (\mu_\Delta)_{\Delta \in \NN} \in \mprob{\NN^*}^\NN$ using~\C~\ref{cond:int}. Then \C~\ref{cond:conv} holds for $\mu$. 
\end{lemma}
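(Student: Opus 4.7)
The plan is to plug the closed form from \C~\ref{cond:int} straight into the ratio appearing in \C~\ref{cond:conv} and lower-bound it uniformly in $(\Delta,t)$. Setting $Z_\Delta := \sum_{s \in \NN^*} \sqrt{\mu_0(s)\mu_0(s+\Delta)}$, the constant $Z_\Delta$ cancels between numerator and denominator, so \C~\ref{cond:conv} reduces to exhibiting $\alpha' > 0$ such that
\[
R(\Delta,t) \;:=\; \frac{\sqrt{\mu_0(t)\mu_0(t+\Delta)}}{\sum_{s \geq t} \sqrt{\mu_0(s)\mu_0(s+\Delta)}} \;\geq\; \alpha'
\qquad \forall\,\Delta \in \NN,\; t \in \NN^*.
\]

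The key step is Cauchy--Schwarz applied to the denominator: after the index shift $u = s + \Delta$,
\[
\sum_{s \geq t} \sqrt{\mu_0(s)\mu_0(s+\Delta)} \;\leq\; \sqrt{\,\sum_{s \geq t} \mu_0(s)\,}\;\cdot\;\sqrt{\,\sum_{u \geq t+\Delta} \mu_0(u)\,}.
\]
Next I would invoke \C~\ref{cond:conv2}, which yields the tail bound $\sum_{s \geq t_0} \mu_0(s) \leq \mu_0(t_0)/\alpha$ at every $t_0 \in \NN^*$, by simple rearrangement. Applying this at $t_0 = t$ and at $t_0 = t+\Delta$ and substituting gives
\[
\sum_{s \geq t} \sqrt{\mu_0(s)\mu_0(s+\Delta)} \;\leq\; \frac{\sqrt{\mu_0(t)\mu_0(t+\Delta)}}{\alpha},
\]
so $R(\Delta,t) \geq \alpha$ uniformly in $(\Delta,t)$.

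This is exactly \C~\ref{cond:conv} with the same constant $\alpha' = \alpha$. I do not foresee any real obstacle: the square-root product structure in \C~\ref{cond:int} is tailor-made for Cauchy--Schwarz, the two factors on the right split along the two coordinates, and the tail estimate handed down by \C~\ref{cond:conv2} is immediate—no iteration, truncation, or approximation is needed, and the constant transfers verbatim from one condition to the other.
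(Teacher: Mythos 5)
Your proposal is correct and essentially identical to the paper's proof: both cancel the normalising constant $Z_\Delta$, apply Cauchy--Schwarz to the sum $\sum_{s\geq t}\sqrt{\mu_0(s)\mu_0(s+\Delta)}$, and then invoke \C~\ref{cond:conv2} twice (at $t$ and at $t+\Delta$), producing the same constant $\alpha$. The only cosmetic difference is that you bound the denominator directly, whereas the paper first factors the ratio into a product of three terms and bounds each term separately.
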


\begin{proof}
  For any $\Delta \in \NN$, $t \in \NN^*$,
  \begin{align*}
    \frac{\mu_\Delta(t)}{\sum_{s \geq t} \mu_\Delta(s)} & = \frac{\sqrt{\mu_0(t) \mu_0(t+\Delta)}}{\sum_{s \geq t} \sqrt{\mu_0(s) \mu_0(s+\Delta)}}  \\
                                                        & = \sqrt{\frac{\mu_0(t)}{\sum_{s \geq t} \mu_0(s)}} \sqrt{\frac{\mu_0(t+\Delta)}{\sum_{s \geq t} \mu_0(s+\Delta)}} \frac{\sqrt{\sum_{s \geq t} \mu_0(s)} \sqrt{\sum_{s \geq t} \mu_0(s+\Delta)}}{\sum_{s \geq t} \sqrt{\mu_0(s) \mu_0(s+\Delta)}}\\
                                                        & \geq \sqrt{\alpha} \times \sqrt{\alpha} \times 1= \alpha.
  \end{align*}
  The last inequality comes from~\C~\ref{cond:conv2} twice and from the Cauchy-Schwarz inequality.
\end{proof}

Before to give the third main theorem of this article, we need to introduce one notation.
For any $b \in \Bridges_L$, for any $t \in \Time_b$, set
 \begin{align} 
   W_{(b,t)} = &  \left( \prod_{i: b_i=b_{i+1}} \frac{\sqrt{\mu_0(|t_{i+1}-t_i|)}}{\sum_{s \geq 1} \sqrt{\mu_0(s)}} \right) \times \nonumber \\
             & \quad \left( \prod_{i: b_i=1,b_{i+1}=-1} \left(\sum_{s > \max(t_i,t_{i+1})} \frac{\sqrt{\mu_0(s-t_i)}}{\sum_{s \geq 1} \sqrt{\mu_0(s)}} \frac{\sqrt{\mu_0(s-t_{i+1})}}{\sum_{s \geq 1} \sqrt{\mu_0(s)}}\right)  \left( \sum_{s \geq 1} \mu_{|t_i-t_{i+1}|}(\min(t_i,t_{i+1}) + s) \right) \right). \label{eq:wbt} 
 \end{align}

 \begin{remark}
   We could obtain a little simplification for $W_{(b,t)}$ with
   \begin{align} 
     W_{(b,t)} = &  \left( \prod_{i: b_i=b_{i+1}} \sqrt{\mu_0(|t_{i+1}-t_i|)} \right) \times \nonumber \\
                 & \quad \left( \prod_{i: b_i=1,b_{i+1}=-1} \left(\sum_{s > \max(t_i,t_{i+1})} \sqrt{\mu_0(s-t_i)} \sqrt{\mu_0(s-t_{i+1})} \right)  \left( \sum_{s \geq 1} \mu_{|t_i-t_{i+1}|}(\min(t_i,t_{i+1}) + s) \right) \right). \label{eq:wbt-simple} 
   \end{align}
   Indeed, the two forms are proportional according to the factor $\left( \sum_{s \geq 1} \sqrt{\mu_0(s)} \right)^{2L}$ that does not depend on $(b,t)$. Moreover, this last form could be simplified in
   \begin{equation} \label{eq:wbt-usimple}
     W_{(b,t)} = \left( \prod_{i: b_i=b_{i+1}} \sqrt{\mu_0(|t_{i+1}-t_i|)} \right) \left( \prod_{i: b_i=1,b_{i+1}=-1} \left(\sum_{s \geq 1} \sqrt{\mu_0(s+t_i)} \sqrt{\mu_0(s+t_{i+1})} \right) \right)
   \end{equation}
   because
   \begin{align}
     & \left(\sum_{s > \max(t_i,t_{i+1})} \sqrt{\mu_0(s-t_i)} \sqrt{\mu_0(s-t_{i+1})} \right)  \left( \sum_{s \geq 1} \mu_{|t_i-t_{i+1}|}(\min(t_i,t_{i+1}) + s) \right) \label{eq:deb} \\
     = & \left(\sum_{s > \max(t_i,t_{i+1})} \sqrt{\mu_0(s-t_i)} \sqrt{\mu_0(s-t_{i+1})} \right)  \left( \sum_{s \geq 1} \frac{\sqrt{\mu_0(s+t_i) \mu_0(s+t_{i+1})}}{\sum_{u \geq 1} \sqrt{\mu_0(u) \mu_0(u+|t_{i}-t_{i+1}|)}}  \right) \text{ (by~\C~\ref{cond:int})} \\
     = &  \frac{\displaystyle \left(\sum_{s > \max(t_i,t_{i+1})} \sqrt{\mu_0(s-t_i)} \sqrt{\mu_0(s-t_{i+1})} \right)  \left(\sum_{s \geq 1} \sqrt{\mu_0(s+t_i) \mu_0(s+t_{i+1})}\right)}{\displaystyle \sum_{v \geq 1 + \max(t_i,t_{i+1})} \sqrt{\mu_0(v-\max(t_i,t_{i+1}))} \sqrt{\mu_0(v-\max(t_i,t_{i+1})+|t_{i}-t_{i+1}|)}} \\
     = & \sum_{s \geq 1} \sqrt{\mu_0(s+t_i)} \sqrt{\mu_0(s+t_{i+1})}. \label{eq:fin}
   \end{align}

   We give these three alternative forms for $W_{(b,t)}$ and not just one because they are all useful in the following. The third one~\eqref{eq:wbt-usimple} is the simplest and its expression is a function of the only $\mu_0$, the parameter of integrable GLPP. The second one~\eqref{eq:wbt-simple} permits to get an easier proof of Theorem~\ref{thm:int}, see Section~\ref{sec:proofint}. Finally, the first one~\eqref{eq:wbt} is the easiest to conjecture by the method explained in Section~\ref{sec:ideas}, and, in particular, in Section~\ref{sec:PCAnu}. 
 \end{remark}
 
\begin{theorem} \label{thm:int}
  For any $\mu_0 \in \mprob{\NN^*}$ such that~\C~\ref{cond:conv2} holds. Define $\mu = (\mu_\Delta)_{\Delta \in \NN}$ by~\C~\ref{cond:int}. In that case, for any $L \in \NN^*$, for any $b \in \Bridges_L$, for any $t \in \Time_b$,
  \begin{equation}
    \tilde{\nu}_L{(b,t)} = \frac{1}{Z_L} W_{(b,t)} \label{eq:conj}
  \end{equation}
  with $\displaystyle Z_L = \sum_{b \in \Bridges_L} \sum_{t \in \Time_b} W_{(b,t)}$. And so
  \begin{equation}
    \nu_L(b) = \frac{1}{Z_L} \sum_{t \in \Time_b} W_{(b,t)}.
  \end{equation}
\end{theorem}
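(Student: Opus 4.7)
By Theorem~\ref{thm:HMC} combined with Lemma~\ref{lem:cc2}, \C~\ref{cond:conv2} forces $(\tilde{\Front}_n)_{n\in\NN}$ to be ergodic, so it admits a unique invariant law; the Cauchy--Schwarz inequality and the full-support hypothesis on $\mu_0$ make $Z_L$ finite and positive, so $(b,t)\mapsto W_{(b,t)}/Z_L$ is a probability measure on $\tilde{\Bridges}_L$. It therefore suffices to verify that this measure is invariant under the one-step kernel of $(\tilde{\Front}_n)_{n\in\NN}$. The plan is to do this by a direct verification using the compact form~\eqref{eq:wbt-simple}.

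The first step is to describe the kernel explicitly. Because the $\xi$-variables at distinct cells are independent, the update from $(b,t)$ is conditionally factorised across the peaks of $\Front_n$: every monotone pair or valley survives with $t_j\mapsto t_j+1$, while every peak $(b_i,b_{i+1})=(+1,-1)$ independently ``resolves'' (the cell above it completes at time $n+1$) with probability
\[
q_{(t_i,t_{i+1})}=\frac{\mu_{|t_i-t_{i+1}|}(\min(t_i,t_{i+1})+1)}{\sum_{s\geq 1}\mu_{|t_i-t_{i+1}|}(\min(t_i,t_{i+1})+s)};
\]
a resolution flips the two signs at $(i,i+1)$ to $(-1,+1)$ with new time coordinates $(0,0)$, and a non-resolution keeps the peak with both times incremented. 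Predecessors of a target $(b',t')\in\tilde{\Bridges}_L$ are therefore indexed by the set $V=\{i : b'_i=-1,\,b'_{i+1}=+1,\,t'_i=t'_{i+1}=0\}$ of freshly created valleys together with, for each $i\in V$, a choice of old peak heights $(a_i,a'_i)\in(\NN^*)^2$ subject to the neighbouring $\Time_b$-constraints.

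The second step is to plug~\eqref{eq:wbt-simple} into
\[
W_{(b',t')}=\sum_{(b,t)\mapsto(b',t')}W_{(b,t)}\,K\bigl((b,t),(b',t')\bigr).
\]
Away from $V$ the pair-types are unchanged and the monotone factors $\sqrt{\mu_0(|t_{j+1}-t_j|)}$ are invariant under the shift $t\mapsto t+1$; moreover, the un-resolved peak factor of $W_{(b,t)}$ multiplied by the ``stays'' probability in $K$ reproduces exactly the corresponding peak factor of $W_{(b',t')}$ (a short computation using \C~\ref{cond:int} that mirrors~\eqref{eq:deb}--\eqref{eq:fin}). The whole identity therefore reduces to one local identity at each $i\in V$, whose key algebraic input is
\[
q_{(a,a')}\cdot\sum_{s\geq 1}\sqrt{\mu_0(s+a)\mu_0(s+a')}=\sqrt{\mu_0(a+1)\,\mu_0(a'+1)},
\]
obtained from \C~\ref{cond:int} by a one-line manipulation. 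The ``freely varying'' peak height (the one not pinned by the neighbouring $\Time_b$-constraints) is then summed out and telescopes against the adjacent $\sqrt{\mu_0(\cdot)}$-factor, yielding exactly the new peak or monotone factor of $W_{(b',t')}$ at the other side.

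The main obstacle I expect is the bookkeeping around each $i\in V$: flipping the signs at $(i,i+1)$ changes the type of the neighbouring pairs $(i-1,i)$ and $(i+1,i+2)$ (monotone $\leftrightarrow$ peak $\leftrightarrow$ valley according to $b'_{i-1}$ and $b'_{i+2}$), so the pre- and post-transition $W$-factors are not in direct bijection near the boundary of $V$. A short case analysis is needed, together with the degenerate subcase of two consecutive resolutions; the form~\eqref{eq:wbt-simple} is tailored so that in every subcase the local invariance collapses to the single algebraic identity above.
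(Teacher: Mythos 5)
Your proposal follows the same route as the paper's own proof: reduce to a direct verification that $W/Z_L$ is invariant under the one-step kernel of $(\tilde{\Front}_n)_{n \in \NN}$, describe the kernel by the local peak-resolution probability $p_m^{\delta}$ (your $q_{(t_i,t_{i+1})}$, which agrees with the paper's formula), and use the key algebraic identity --- equivalent to~\eqref{eq:deb}--\eqref{eq:fin} --- that a peak factor multiplied by the resolution probability collapses to $\sqrt{\mu_0(1+s_i)\,\mu_0(1+s_{i+1})}$; the identity you state is exactly what~\C~\ref{cond:int} yields. Where you stop is precisely where the paper spends the bulk of its effort: the ``short case analysis'' you defer around each $i$ with $t'_i=t'_{i+1}=0$ is the nine-case decomposition pictured in Figure~\ref{fig:9cases} and carried out at length in Section~\ref{sec:proofint}, because the pair types on both sides of a freshly created valley change (cases $4$--$9$), and one must simultaneously track which previous-time coordinates $s_i$ are pinned by $\Time_b$ indicators (so appear as $\ind{s_i=t_{i+1}-1}$, etc.) and which are free and get summed out, then reassemble the surviving factors to match $W_{(b',t')}$. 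You also need the easy preliminary case in which all $t_i>0$, where the predecessor is unique and the argument is one line, and a check that resolutions of adjacent peaks do not interact beyond the factorisation you describe. In short, the plan is structurally correct and your key algebraic input is right, but the proposal as written skips the case-by-case bookkeeping that is the actual substance of the paper's proof; calling it ``a short case analysis'' materially understates the work that remains.
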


Now, we could ask about the limit of $\nu_L$ when $L \to \infty$ on these integrable cases. Currently, it is an open problem.\par

Moreover, we are also able to give the mean speed $\meanspeed_L$ of this front line.
\begin{theorem} \label{thm:int-speed}
  For any $\mu_0 \in \mprob{\NN^*}$ such that~\C~\ref{cond:conv2} holds. Define $(\mu_\Delta)_{\Delta \in \NN}$ by~\C~\ref{cond:int}. In that case, for any $L\in \NN^*$, the asymptotic mean speed of the GLPP with parameter $\mu$ on the cylinder of size $L$ is
  \begin{equation} \label{eq:int-speed}
    \meanspeed_L = \frac{\displaystyle \sum_{(b,t) \in \tilde{\Bridges}_L} W_{(b,t)} }{\displaystyle \sum_{(b,t) = (b,(t_1,\dots,t_{2L})) \in \tilde{\Bridges}_L} (2 t_1 +1) W_{(b,t)}}.
  \end{equation}
\end{theorem}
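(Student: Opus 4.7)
The plan is to combine Theorem~\ref{thm:meanspeed} and Theorem~\ref{thm:int} by direct substitution; the work is essentially bookkeeping once both ingredients are available. First I would check that the hypotheses of the earlier theorems are satisfied. By Lemma~\ref{lem:cc2}, the assumption that $\mu_0$ satisfies \C~\ref{cond:conv2} implies that the family $\mu=(\mu_\Delta)_{\Delta \in \NN}$ defined via~\C~\ref{cond:int} satisfies \C~\ref{cond:conv}. Moreover, since $\mu_0 \in \mprob{\NN^*}$, the formula~\eqref{eq:int} forces $\mu_\Delta \in \mprob{\NN^*}$ for every $\Delta$, so the requirement $\mu_\Delta(0)=0$ needed in Theorem~\ref{thm:meanspeed} holds. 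Hence both Theorem~\ref{thm:meanspeed} and Theorem~\ref{thm:int} are applicable.

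Next I would invoke Theorem~\ref{thm:meanspeed}, which gives
\begin{equation*}
  \meanspeed_L = \frac{1}{\displaystyle 1 + 2 \sum_{(b,(t_1,\dots,t_{2L})) \in \tilde{\Bridges}_L} t_1\, \tilde{\nu}_L((b,t))}.
\end{equation*}
Then I would substitute the explicit invariant law provided by Theorem~\ref{thm:int}, namely $\tilde{\nu}_L((b,t)) = W_{(b,t)}/Z_L$, where $Z_L = \sum_{(b,t) \in \tilde{\Bridges}_L} W_{(b,t)}$. This yields
\begin{equation*}
  \meanspeed_L = \frac{1}{\displaystyle 1 + \frac{2}{Z_L}\sum_{(b,(t_1,\dots,t_{2L})) \in \tilde{\Bridges}_L} t_1 W_{(b,t)}} = \frac{Z_L}{\displaystyle Z_L + 2 \sum_{(b,(t_1,\dots,t_{2L})) \in \tilde{\Bridges}_L} t_1 W_{(b,t)}}.
\end{equation*}

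Finally, I would rewrite the denominator by distributing the sum: $Z_L + 2\sum t_1 W_{(b,t)} = \sum_{(b,t) \in \tilde{\Bridges}_L}(1 + 2t_1) W_{(b,t)}$, and note that $Z_L$ in the numerator equals $\sum_{(b,t)\in \tilde{\Bridges}_L} W_{(b,t)}$ by definition. This produces exactly formula~\eqref{eq:int-speed}, completing the proof.

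There is no real obstacle here: the theorem is a direct corollary of Theorems~\ref{thm:meanspeed} and~\ref{thm:int}, and the only subtlety is verifying through Lemma~\ref{lem:cc2} that the ergodicity hypothesis \C~\ref{cond:conv} is inherited from \C~\ref{cond:conv2}, so that the invariant measure $\tilde{\nu}_L$ exists, is unique, and coincides with the explicit expression of Theorem~\ref{thm:int}.
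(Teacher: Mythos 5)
Your proof is correct and follows exactly the route the paper takes: the paper dispatches Theorem~\ref{thm:int-speed} in one line as a corollary of Theorems~\ref{thm:meanspeed} and~\ref{thm:int}, which is precisely the substitution and rearrangement you carry out. Your additional care in checking that \C~\ref{cond:conv2} yields \C~\ref{cond:conv} via Lemma~\ref{lem:cc2} and that $\mu_\Delta(0)=0$ is a welcome bit of rigor but does not change the argument.
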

As before, it is important here that $\mu_0(0) =0$.
% and, for any $\Delta \in \NN$, $\mu_{\Delta}(0) = 0$
% The condition ``for any $\Delta \in \NN$, $\mu_{\Delta}(0) = 0$'' is supposed all along this paper even if many properties and theorems could hold without it. But, in this theorem, it is a necessary condition.\par 
In Section~\ref{sec:intLPP}, we give another expression for $c_L$ in~\eqref{eq:int-speed-alt}.\par

The proofs of Theorems~\ref{thm:int} and~\ref{thm:int-speed} are done in Section~\ref{sec:proofint}, and they are some computations once the values of $W_{(b,t)}$ and \C~\ref{cond:int} are conjectured. In fact, the most difficult part is to establish these conjectures that is done in Section~\ref{sec:ideas} applying the theory of PCA.

\section{GLPP on cylinders: discrete time} \label{sec:proof}
\subsection{Proof of Theorem~\ref{thm:HMC}} \label{sec:proofHMC}
\paragraph{Proof that $(\tilde{\Front}_n)$ is a Markov chain:} 
The dynamic of $\left(\tilde{\Front}_n\right)_{n \in \NN}$ is the following one: if, at time $n$, we have $(b_n,t_n) = ((b_{n,i})_{i \in \ZZL},(t_{n,i})_{i \in \ZZL})$, then, for any $j \in \ZZL$, for any $k \in \{-1,1\}$ such that
\begin{enumerate}
\item $b_{n,j} = k = b_{n,j+k}$, then $b_{n+1,j} = b_{n,j} = k$ and $t_{n+1,j} = t_{n,j} + 1$ ;
\item $b_{n,j} = k = - b_{n,j+k}$ (i.e. $b_n$ has a local maximum between $j$ and $j+k$), then 
\begin{enumerate}
\item $(b_{n+1,j},b_{n+1,j+k},t_{n+1,j},t_{n+1,j+k}) = (-k,k,0,0)$ with probability $\displaystyle p_{m}^{\delta} = \frac{\mu_\delta(1 + m)}{\sum_{s \geq 1} \mu_\delta(s + m)}$ where $m = \min \left( t_{n,j},t_{n,j+k} \right)$ and $\delta= \left| t_{n,j}-t_{n,j+k} \right|$;
\item $(b_{n+1,j},b_{n+1,j+k},t_{n+1,j},t_{n+1,j+k}) = (k,-k,t_{n,j}+1,t_{n,j+k}+1)$ with probability $\displaystyle 1 - p_{m}^{\delta}$ where $m = 1 + \min \left( t_{n,j},t_{n,j+k} \right)$ and $\delta= \left| t_{n,j}-t_{n,j+k} \right|$.
\end{enumerate}
\end{enumerate}

\begin{figure}
  \begin{center}
    \begin{tabular}{|c|c|}
      \hline
      $n$ & $n+1$ \\\hline
      $k=1$ & $k=1$ \\
      \includegraphics{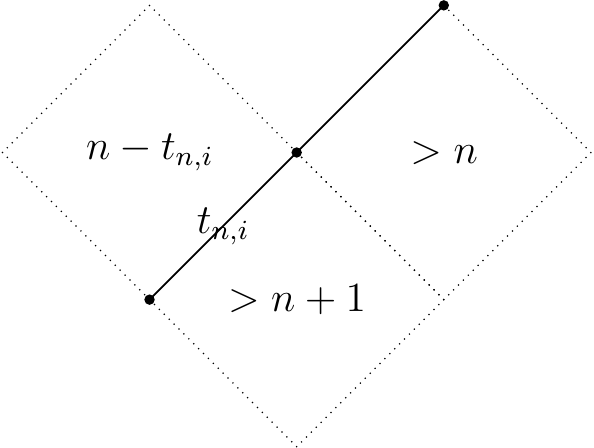} & \includegraphics{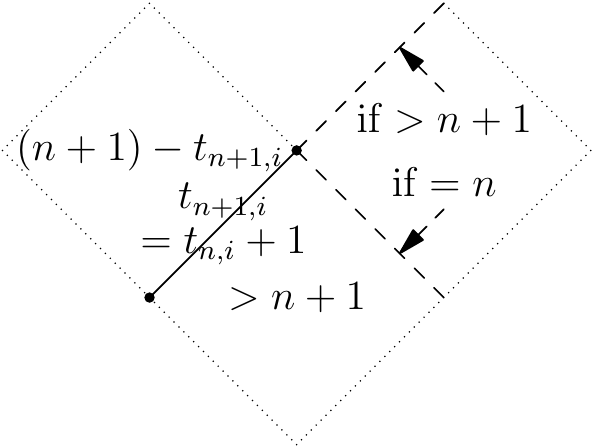}\\
          & w.p. $1$ \\\hline
      $k=-1$ & $k=-1$ \\
      \includegraphics{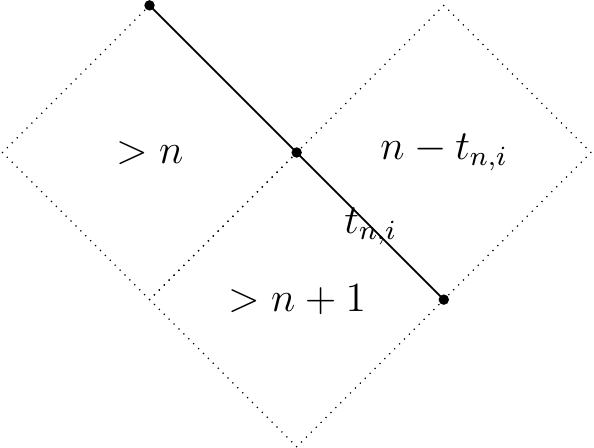} & \includegraphics{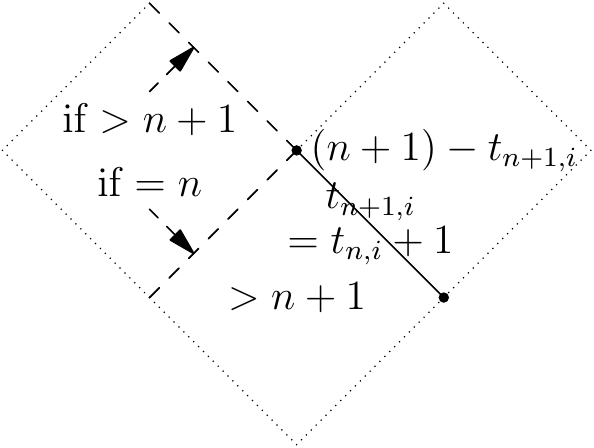} \\
          & w.p. $1$ \\\hline
    \end{tabular}
  \end{center}
  \caption{Case 1 of the dynamic}
  \label{fig:dyn1}
\end{figure}

\begin{figure}
\begin{center}
    \begin{tabular}{|c|c|}
      \hline
      $n$ & $n+1$ \\\hline
      \includegraphics{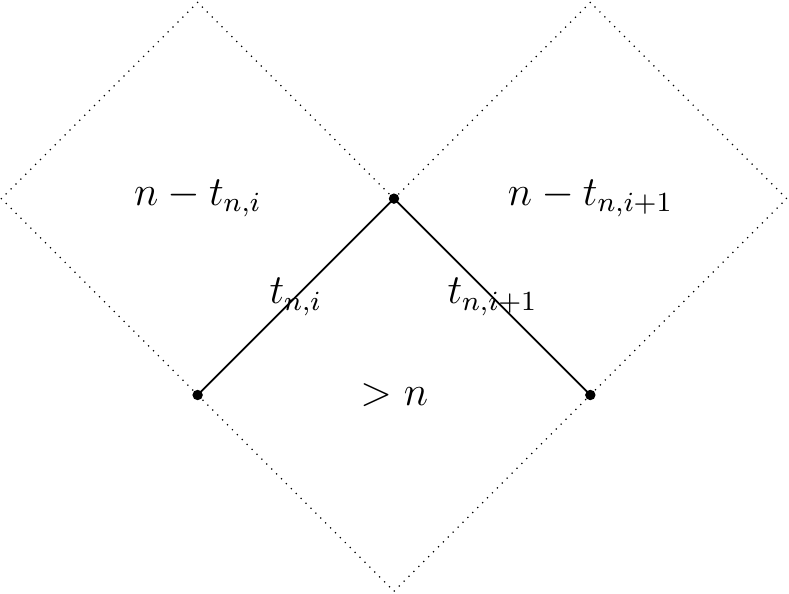} &
      {\begin{tabular}{c}
         (a) \\ \smallskip
         \includegraphics{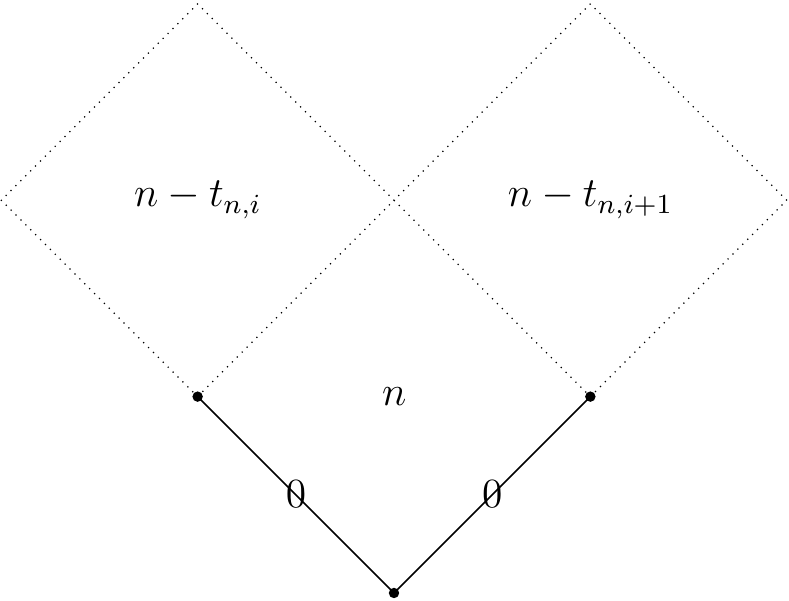}\\ \smallskip
         w.p. $p_{1 + \min(t_{n,j},t_{n,j+1})}^{|t_{n,j}-t_{n,j+1}|}$\\
         \hline
         (b) \\ \smallskip
         \includegraphics{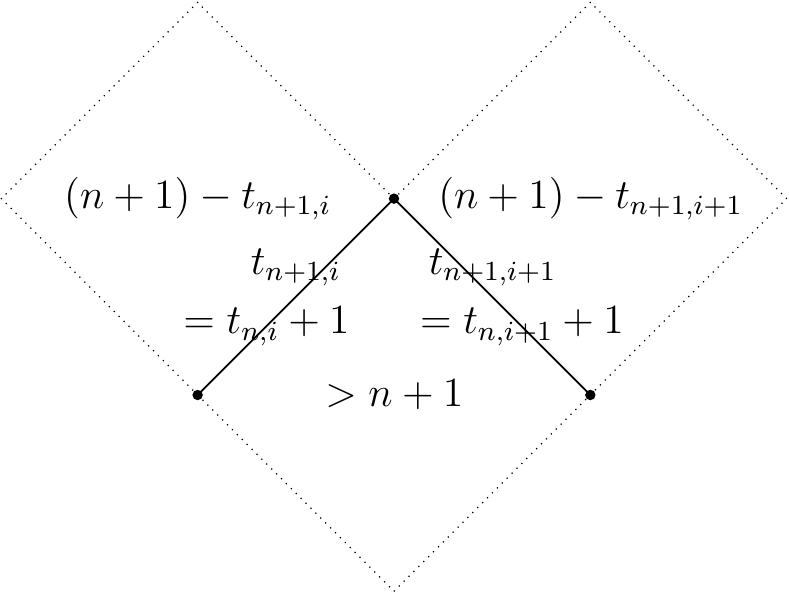} \\ \smallskip
         w.p. $1-p_{1 + \min(t_{n,j},t_{n,j+1})}^{|t_{n,j}-t_{n,j+1}|}$
      \end{tabular}}
      \\\hline
    \end{tabular}
  \end{center}
  \caption{Case 2 of the dynamic}
  \label{fig:dyn2}
\end{figure}

Why is it the same dynamic as the definition of $(\tilde{F}_n)_{n \in \NN}$?
The dynamic is obviously the same in the case 1, illustrated in Figure~\ref{fig:dyn1}. In the case 2, we need to justify the value of $p_{m}^{\delta}$. Suppose that we are in the second case as illustrated in Figure~\ref{fig:dyn2}. Let us define $\TL_j = n-t_{n,j}$ and $\TL_{j+k} = n - t_{n,j+k}$, and denote by $z_1$ the face adjacent to edge $j$ such that $\TL_1 = \TL(z_1) \leq n$, $z_2$ the one adjacent to edge $j+k$ such that $\TL_{j+k} = \TL(z_2) \leq n$ and $z_3$ the one that is adjacent to both edges $j$ and $j+k$. On the GLPP, the fact that $\Front_n$ is $b_n$ means that $\TL(z_1) = \TL_j$, $\TL(z_2) = \TL_{j+k}$ and $\TL(z_3) > n$. Now,
\begin{align}
  & \prob{\TL(z_3) = n+1 | \TL(z_1) = \TL_j, \TL(z_2) = \TL_{j+k}, \TL(z_3) > n } \nonumber \\ 
  & = \frac{\mu_{|\TL_{j} - \TL_{j+k}|}((n+1) - \max(\TL_j,\TL_{j+k}))}{\displaystyle \sum_{s \geq (n+1) - \max(\TL_j,\TL_{j+k})} \mu_{|\TL_{j} - \TL_{j+k}|}(s)}\\
  & = \frac{\mu_{|t_{n,j} - t_{n,j+k}|}(1 + \min(t_{n,j},t_{n,j+k}))}{ \sum_{s \geq 1+ \min(t_{n,j},t_{n,j+l}) } \mu_{|t_{n,j} - t_{n,j+k}|}(s)}\\
  & = p_{m}^{\delta}
\end{align}
In this case, $\tilde{\Front}_{n+1}$ gets a local minimum between $j$ and $j+k$ as illustrated in Figure~\ref{fig:dyn2}(a) that corresponds to case 2(a). Else (with probability $1- p_{m}^{\delta}$), $\TL(z_3) > n+1$, and so we obtain case 2(b).

\begin{remark} \label{rem:nono-proof}
  It is exactly, for this proof, that we want the condition $\mu_{\Delta}(0) = 0$. Indeed, if for some $\Delta$ $\mu_\Delta(0) \in (0,1)$, the transition for the Markov chain $(\tilde{\Front}_n)_{n \in \NN}$ becomes much more complicated. Indeed, in that case, for any local maximum that becomes a local minimum, we have to check that the two possible new local maxima created do or do not become local minima in the same time slot, etc. Hence, instead of having a Markov kernel that is understandable for $(\tilde{\Front}_n)_{n \in \NN}$, we would get a very intricate kernel.
\end{remark}

\paragraph{Proof that $(\tilde{\Front}_n)_{n \in \NN}$ is ergodic:} 
Now, to conclude the proof of Theorem~\ref{thm:HMC}, we have to prove that the Markov chain $( \tilde{\Front}_n )_{n \in \NN}$ is ergodic when~\C~\ref{cond:conv} holds.\par

Firstly, $(\tilde{\Front}_n)_{n \in \NN}$ is irreducible because, from any state, the Markov chain can go to $b_i = (-1)^i$ and $t_i = 0$ by applying case 2(a) to any local maximum at each step of time during $\lceil L/2 \rceil$ time steps (or $\lceil L/2 \rceil +1$ time steps). And, from this state, it can go to any other element of $\tilde{\Bridges}_L$ by changing local maximum to local minimum at some precise moments.\par

Secondly, $(\tilde{\Front}_n)_{n \in \NN}$ is aperiodic. Indeed, from $( ((-1)^i)_{i\in \ZZL}, 0^{\ZZL})$, it can come back in two steps of time by going through $( ((-1)^{i+1})_{i\in \ZZL}, 0^{\ZZL})$, or in three steps of time by going through $( ((-1)^i)_{ i\in \ZZL}, 1^{\ZZL})$, then $( ((-1)^{i+1})_{i\in \ZZL}, 0^{\ZZL})$. So period divides $\text{gcd}(2,3) = 1$, so it is $1$.

The last point is obtained by using the Foster criterion, see~\cite[Theorem~1.1, Chapter~5, p.167]{Bremaud99}. The Lyapunov function that we take is $\displaystyle l((b,t)) = \sum_{i \in \ZZL} t_i$. Our finite refuge is
  \begin{equation}
    R_{2L} = \left \{(b,t) : l((b,t)) \leq 2L \left( \frac{2L}{\alpha} + 1 \right) \right \}.
  \end{equation}

  Now, take $\tilde{\Front}_n = (b,t) \notin R_{2L}$, then $\displaystyle \max_{i \in \ZZL} t_i \geq \frac{2L}{\alpha} + 1$. Denote by $j$ one of the index such that $t_i$ is maximum ($j = \text{argmax}_{i \in \ZZL}(t_i)$). By the fact that $\tilde{\Front}_n \in \tilde{\Bridges}_L$, we know that $b$ has ($b_j=1$ and $b_{j+1}=-1$) or ($b_{j-1}=1$ and $b_{j}=-1$). Suppose that it is $b_{j-1} = 1 = -b_j$. Now, by applying the dynamic of the Markov chain, the local maximum between $j-1$ and $j$ becomes a local minimum with probability $p_{t_{j-1}}^{t_{j} - t_{j-1}}$ and so
  \begin{align*}
    \esp{l(\tilde{\Front}_{n+1}) | \tilde{\Front}_n = (b,t)} & \leq (l(\tilde{\Front}_n) + 2L) (1- p_{t_{j-1}}^{t_{j} - t_{j-1}}) + p_{t_{j-1}}^{t_{j} - t_{j-1}} (l(\tilde{\Front}_n) + 2L - t_j - t_{j-1}) \\
                                           & \leq (l(\tilde{\Front}_n) + 2L) - p_{t_{j-1}}^{t_{j} - t_{j-1}} t_{j} \\
                                           & \leq l(\tilde{\Front}_n) + 2L - \alpha \left( \frac{2L}{\alpha} + 1 \right) = l(\tilde{\Front}_n) - \alpha
  \end{align*}
  Hence, $\tilde{\Front}_n$ is ergodic.
\qed

\subsection{Proof of Theorem~\ref{thm:meanspeed}} \label{sec:proofmeanspeed}
\begin{proof}
  We consider the projection $\sigma : \tilde{\Bridges}_L \to \NN$ that is the projection according to $t_1$: for any $(b = (b_i)_{i \in \ZZL}, t = (t_i)_{i \in \ZZL}) \in \tilde{\Bridges}_L$, $\sigma((b,t)) = t_1$. We denote by $\sigma_{\tilde{\nu}_L}$ the law of the random variable $t_1 = \sigma((b,t))$ when $(b,t) \sim \tilde{\nu}_L$. Moreover, for any $y \in \NN$, we denote by $\edgetime_y$ the time $\edgetime(e)$ as defined in~\eqref{eq:edgetime} where $e$ is the edge between the two squares $\left(\frac{1 - (-1)^y}{2},y\right)$ and $\left(\frac{1 - (-1)^{y+1}}{2},y+1\right)$.\par

Now, let's consider the hidden Markov chain $(t_{n,1})_{n \in \NN} = \left(\sigma(\tilde{\Front}_n)\right)_{n \in \NN}$ where $\tilde{\Front}_0$ is taken under its invariant measure $\tilde{\nu}_L$. Under this invariant law, the sequence of times $(\edgetime_j)_{j \in \NN}$ is simply $\left(t_{n_j-1,1}+1\right)_{j \in \NN}$ where $n_j$ is the $j$th time such that $t_{n_j,1} = 0$, i.e.\ $n_j = \min \{n > n_{j-1}: t_{n,1} = 0 \}$. Now, the proof is quite simple. Indeed, we remark that, for any $j \geq 1$, for any $i \leq j-1$,
  \begin{equation}
    \prob{t_{0,1} = i | \edgetime_0 = j} = \frac{\prob{t_{0,1} = i \text{ and } \edgetime_0 = j}}{\sum_{k=0}^{j-1} \prob{t_{0,1} = k \text{ and } \edgetime_0 = j}} = \frac{1}{j}.
  \end{equation}
  That comes from the fact that, for any $j \geq 2$, for any $0 \leq k \leq j-2$, 
  \begin{equation}
    \prob{t_{0,1} = k \text{ and } \edgetime_0 = j} = \prob{t_{1,1} = k+1 \text{ and } \edgetime_0 = j} = \prob{t_{0,1} = k+1 \text{ and } \edgetime_0 = j}.
  \end{equation}
  The first equality comes from the fact that the value of $t_{n,1}$ increases by 1 at each step of time until $n$ reaches one of the $n_j$ where $t_{n_j,1} = 0$ and so on, and the second equality comes because $(t_{n,1})_{n \in \NN}$ is taken under the invariant law.

  Hence, knowing $\edgetime_0$, the law of $t_{0,1}$ is uniform on $\{0,1,\dots,\edgetime_0-1\}$. So
  \begin{equation}
    \esp{t_{0,1} | \edgetime_0} = \frac{\edgetime_0 - 1}{2}.
  \end{equation}
  And, so,
  \begin{equation}
    \esp{t_{0,1}}= \frac{\esp{\edgetime_0} - 1}{2}.
  \end{equation}

  Finally, $c_L = \displaystyle \frac{1}{\esp{\edgetime_0}} =  \frac{1}{2 \esp{t_{0,1}} + 1}$ and $t_{0,1}$ is distributed according to $\sigma_{\tilde{\nu}_L}$. The first equality is obtained by a law of large number for ergodic Markov chain, or the Birkhoff theorem.
\end{proof}

\begin{remark}
  From the ergodicity of $(\tilde{F}_n)_{n \in \NN}$, we can deduce the one of $(t_{n,1})_{n \in \NN}$. Moreover, the step of $(t_{n,1})_{n \in \NN}$ are adding $1$ or returning to $0$, so the mean of the return time to $0$ is $\esp{t_1}$ with $t_1 \sim \sigma_{\tilde{\nu}_L}$. But $(t_{n,1})_{n \in \NN}$ is ergodic, so its return time to $0$ is finite. That's why we can deduce that $\esp{t_1} < \infty$ if $(\tilde{\Front}_n)_{n \in \NN}$ is ergodic.
\end{remark}

\subsection{Proof of Theorems~\ref{thm:int} and~\ref{thm:int-speed}} \label{sec:proofint}
Suppose that $(\mu_\Delta)_{\Delta \in \NN}$ satisfies \C~\ref{cond:int}. Because the dynamic on $(\tilde{\Front}_n)_{n \in \NN}$ is known, see Section~\ref{sec:proofHMC}, we can just check that the conjectured $\tilde{\nu}_L((b,t))$ given by~\eqref{eq:wbt-simple} and~\eqref{eq:conj} is invariant for the dynamic. Suppose that $\tilde{\Front}_n \sim \tilde{\nu}_L$, then, for any $(b,t) \in \tilde{\Bridges}_L$,
\begin{itemize}
\item if, for any $i$, $t_i > 0$, then
  \begin{align}
    & \prob{\tilde{\Front}_{n+1} = (b,t)}\\
    =\ & \prob{\tilde{\Front}_n = (b,(t_1-1,t_2-1,\dots,t_{2L}-1))} \prod_{i:b_i=1,b_{i+1}=-1} 1-\frac{\mu_{|t_i-t_{i+1}|}(\min(t_i,t_{i+1}))}{\sum_{s \geq 0} \mu_{|t_i-t_{i+1}|}(s + \min(t_i,t_{i+1})}\\
    =\ & \frac{1}{Z_L} \left( \prod_{i: b_i=b_{i+1}} \sqrt{\mu_0(|t_{i+1}-t_i|)} \right) \left( \prod_{i: b_i=1,b_{i+1}=-1} \left(\sum_{s > \max(t_i,t_{i+1})} \sqrt{\mu_0(s-t_i)}  \sqrt{\mu_0(s-t_{i+1})}\right)  \right.  \nonumber \\
    & \left. \left( \sum_{s \geq 1} \mu_{|t_i-t_{i+1}|}(\min(t_i,t_{i+1}) - 1 + s) \left(1 - \frac{\mu_{|t_i-t_{i+1}|}(\min(t_i,t_{i+1}))}{\sum_{s \geq 0} \mu_{|t_i-t_{i+1}|}(s + \min(t_i,t_{i+1})}\right)\right) \right)     
  \end{align}
  
  But,
  \begin{equation}
    \left( \sum_{s \geq  0} \mu_{\Delta}(m + s) \right)\left(1 - \frac{\mu_{\Delta}(m)}{\sum_{s \geq 0} \mu_{\Delta}(m + s)}\right) 
    = \sum_{s \geq 1} \mu_{\Delta}(m + s).
  \end{equation}
  Hence, for any $(b,t)$ such that, for any $i$, $t_i> 0$, $\prob{\tilde{\Front}_{n+1} = (b,t)} = \tilde{\nu}_L((b,t))$.
  
\item we suppose that there exists some $i$ such that $t_i=0$. Because $t \in \Time_b$, this implies that
  \begin{enumerate}
  \item there is an even number of such $i$: $I = \{i_1,i_2,\dots,i_{2m}\}$ with $m > 1$,
  \item and we can pair them $\{(i_1,i_2),\dots,(i_{2m-1},i_{2m})\}$ such that, for any $j \in \{1,\dots,m\}$, $i_{2j} = i_{2j-1} +1$ and $b_{i_{2j-1}} = -1$ and $b_{i_{2j}} = 1$.
  \end{enumerate}
  Then,
  \begin{align*}
    & \prob{\tilde{\Front}_{n+1} = (b,t)}\\
    = & \sum_{(s_{i})_{i \in I} \in \NN^I} \prob{\tilde{\Front}_n = ((b_i \ind{i \notin I} - b_i \ind{i \in I})_{i \in \ZZL},((t_i-1) \ind{i \notin I} + s_i \ind{i \in I})_{i \in \ZZL})} \nonumber \\
    & \quad \left( \prod_{i:t_i=t_{i+1} = 0} \frac{\mu_{|s_i-s_{i+1}|}(1 + \min(s_i,s_{i+1}))}{\sum_{s \geq 1 + \min(s_i,s_{i+1})} \mu_{|s_i-s_{i+1}|}(s)}\right) \\
    & \quad \left( \prod_{i:b_i=1,b_{i+1}=-1,t_i \neq 0 \neq t_{i+1}} 1-\frac{\mu_{|t_i-t_{i+1}|}(\min(t_i,t_{i+1}))}{\sum_{s \geq \min(t_i,t_{i+1})} \mu_{|t_i-t_{i+1}|}(s)}\right)
  \end{align*}
  
  Now, we decompose the product according to the 9 different cases illustrated in Figure~\ref{fig:9cases}. Note that cases 1 and 2 are presented both in the same factor (the first one).
  \begin{figure}
    \begin{center}
      \begin{tabular}{|c|c|c||c|c|c|} \hline
        Case & $n+1$ & $n$ & Case & $n+1$ & $n$ \\ \hline
        1 & \includegraphics{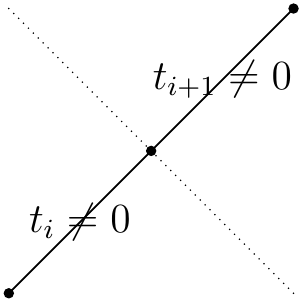} & \includegraphics{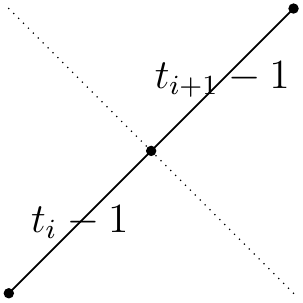} & 4 & \includegraphics{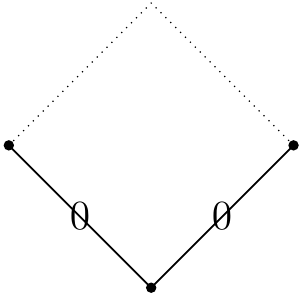} & \includegraphics{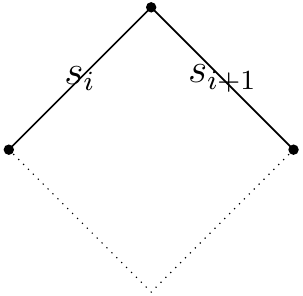}  \\ \hline
        2 & \includegraphics{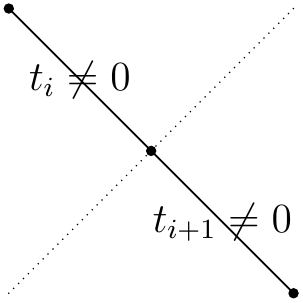} & \includegraphics{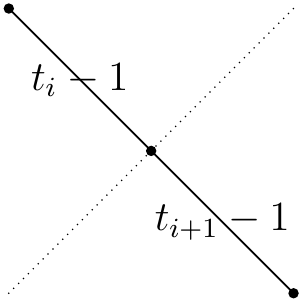} & 5 & \includegraphics{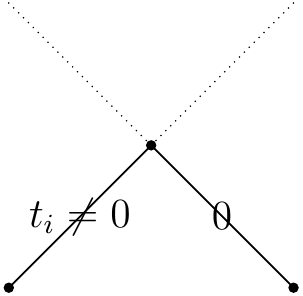} & \includegraphics{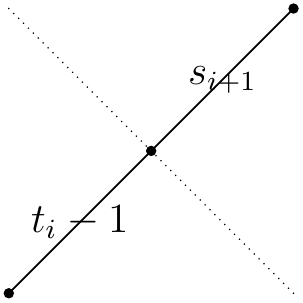}  \\ \hline
        3 & \includegraphics{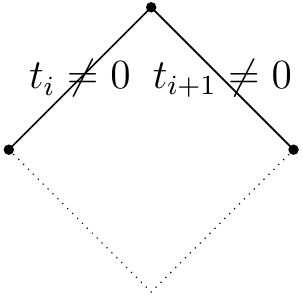} & \includegraphics{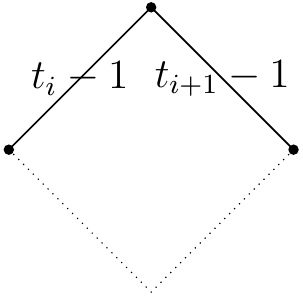} & 6 & \includegraphics{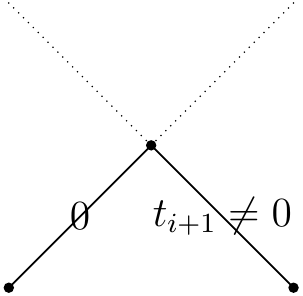} & \includegraphics{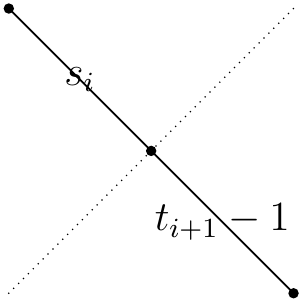}  \\ \hline
      \end{tabular}\newline
      \begin{tabular}{|c|c|c|} \hline
        Case & $n+1$ & $n$ \\ \hline
        7 & \includegraphics{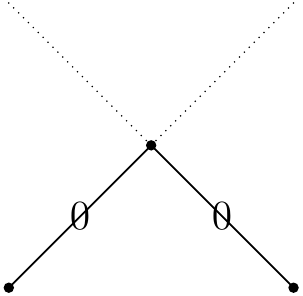} & \includegraphics{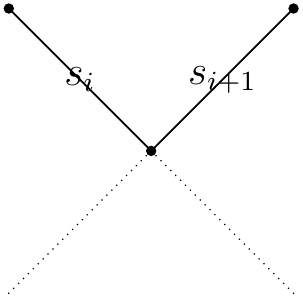} \\ \hline
        8 & \includegraphics{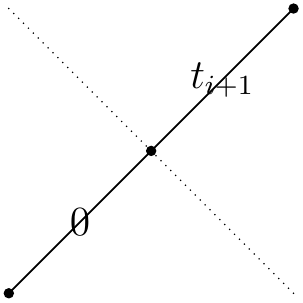} & \includegraphics{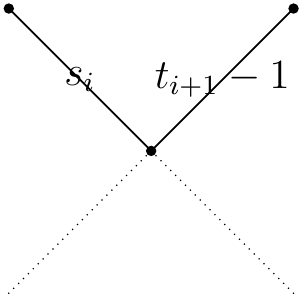} \\ \hline
        9 & \includegraphics{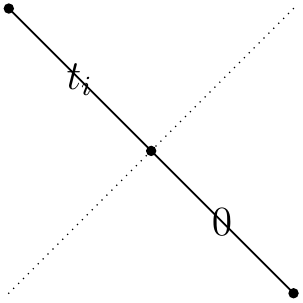} & \includegraphics{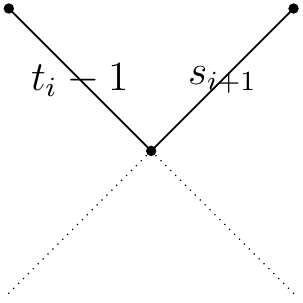} \\ \hline
      \end{tabular}
    \end{center}
    \caption{The 9 different cases.}
    \label{fig:9cases}
  \end{figure}

  \begin{align*}
    & \prob{\tilde{\Front}_{n+1} = (b,t)}\\
    = & \frac{1}{Z_L} \left( \prod_{i: b_i=b_{i+1},t_i \neq 0 \neq t_{i+1}} \sqrt{\mu_0(|t_{i+1}-t_i|)} \right) \\
    & \left[ \prod_{i: b_i=1,b_{i+1}=-1, t_i \neq 0 \neq t_{i+1}} \left(\sum_{s > \max(t_i,t_{i+1})} \sqrt{\mu_0(s-t_i)} \sqrt{\mu_0(s-t_{i+1})}\right)  \right. \\
    &  \quad \left. \left( \sum_{s \geq 1} \mu_{|t_i-t_{i+1}|}(\min(t_i,t_{i+1}) - 1 + s) \left(1 - \frac{\mu_{|t_i-t_{i+1}|}(\min(t_i,t_{i+1}))}{\sum_{s \geq \min(t_i,t_{i+1})} \mu_{|t_i-t_{i+1}|}(s)}\right)\right) \right] \\
    &  \left\{ \sum_{(s_{i})_{i \in I} \in \NN^I} \left( \prod_{i : b_i=b_{i+1}=1, t_i=0 \neq t_{i+1}} \ind{s_i=t_{i+1}-1}  \right) \left( \prod_{i:b_i=b_{i+1}=-1,t_i \neq 0 = t_{i+1}} \ind{s_{i+1}=t_i-1} \right) \right.\\
    & \quad \left(\prod_{b_i = 1, b_{i+1} = -1, t_i \neq 0 = t_{i+1}} \sqrt{\mu_0(s_{i+1}-t_i+1)}\right) \left(\prod_{b_i = 1, b_{i+1} = -1, t_i = 0 \neq t_{i+1}} \sqrt{\mu_0(s_{i}-t_{i+1}+1)}\right) \\
    & \quad \left(\prod_{i:b_i=1,b_{i+1}=-1, t_i=0=t_{i+1}} \ind{s_i=s_{i+1}} \right) \\
    & \quad \left[ \prod_{i: b_i=-1,b_{i+1}=1, t_i = 0 = t_{i+1}} \left(\sum_{s > \max(s_i,s_{i+1})} \sqrt{\mu_0(s-s_i)} \sqrt{\mu_0(s-s_{i+1})}\right)  \right. \\
    &  \quad \quad \left. \left. \left( \sum_{s \geq 1} \mu_{|s_i-s_{i+1}|}(\min(s_i,s_{i+1}) + s) \frac{\mu_{|s_i-s_{i+1}|}(1 + \min(s_i,s_{i+1}))}{\sum_{s \geq 1} \mu_{|s_i-s_{i+1}|}(\min(s_i,s_{i+1})+s)}\right)\right]\right\}
  \end{align*}
  
  Now, in case 4 that is $i$ such that $b_i=-1,b_{i+1}=1, t_i = 0 = t_{i+1}$, by the same computations that have be done to go from~\eqref{eq:deb} to~\eqref{eq:fin},
  \begin{equation}
    \left(\sum_{s > \max(s_i,s_{i+1})} \sqrt{\mu_0(s-s_i)} \sqrt{\mu_0(s-s_{i+1})} \right) \mu_{|s_i-s_{i+1}|}(1 + \min(s_i,s_{i+1}))
    = \sqrt{\mu_0(1+s_i)} \sqrt{\mu_0(1+s_{i+1})}.
  \end{equation}

  Moreover, we remark that any $s_i$ with $i \in I$ must appear twice: once in a case 4, and once and only once between cases $5$, $6$, $7$, $8$ and $9$. So, now, by reordering factor, we obtain 
  \begin{align*}
    & \prob{\tilde{\Front}_{n+1} = (b,t)}\\
    = & \frac{1}{Z_L} \left( \prod_{i: b_i=b_{i+1},t_i \neq 0 \neq t_{i+1}} \sqrt{\mu_0(|t_{i+1}-t_i|)} \right) \\
    & \hspace{-5mm}\left[ \prod_{i: b_i=1,b_{i+1}=-1, t_i \neq 0 \neq t_{i+1}} \left(\sum_{s > \max(t_i,t_{i+1})} \sqrt{\mu_0(s-t_i)} \sqrt{\mu_0(s-t_{i+1})}\right) \left( \sum_{s \geq 1} \mu_{|t_i-t_{i+1}|}(\min(t_i,t_{i+1}) + s) \right) \right] \\
    &  \left\{ \sum_{(s_{i})_{i \in I} \in \NN^I} \left( \prod_{i : b_i=b_{i+1}=1, t_i=0 \neq t_{i+1}} \sqrt{\mu_0(1+s_i)} \ind{s_i=t_{i+1}-1}  \right) \right. \\
    & \quad \left( \prod_{i:b_i= b_{i+1}=-1,t_i \neq 0 = t_{i+1}} \sqrt{\mu_0(1+s_{i+1})} \ind{s_{i+1}=t_i-1} \right) \\
    & \quad \left(\prod_{b_i = 1, b_{i+1} = -1, t_i \neq 0 = t_{i+1}} \sqrt{\mu_0(s_{i+1}-t_i+1)} \sqrt{\mu_0(1+s_{i+1})} \right) \\
    & \quad \left(\prod_{b_i = b_{i+1} = -1, t_i = 0 \neq t_{i+1}} \sqrt{\mu_0(1+s_i)} \sqrt{\mu_0(s_{i}-t_{i+1}+1)} \right) \\
    & \quad \left. \left(\prod_{i:b_i=1,b_{i+1}=-1, t_i=0=t_{i+1}} \sqrt{\mu_0(1+s_i)} \sqrt{\mu_0(1+s_{i+1})} \ind{s_i=s_{i+1}} \right) \right\}
  \end{align*}
  
  And, so,
  \begin{align*}
    & \prob{\tilde{\Front}_{n+1} = (b,t)}\\
    = & \frac{1}{Z_L} \left( \prod_{i: b_i=b_{i+1},t_i \neq 0 \neq t_{i+1}} \sqrt{\mu_0(|t_{i+1}-t_i|)} \right) \\
    & \hspace{-5mm}\left[ \prod_{i: b_i=1,b_{i+1}=-1, t_i \neq 0 \neq t_{i+1}} \left(\sum_{s > \max(t_i,t_{i+1})} \sqrt{\mu_0(s-t_i)} \sqrt{\mu_0(s-t_{i+1})} \right) \left( \sum_{s \geq 1} \mu_{|t_i-t_{i+1}|}(\min(t_i,t_{i+1}) + s) \right) \right] \\  
    &  \left\{ \sum_{(s_{i})_{i \in I} \in \NN^I} \left( \prod_{i : b_i=b_{i+1}=1, t_i=0 \neq t_{i+1}} \sqrt{\mu_0(t_{i+1})} \right) \left( \prod_{i:b_i=b_{i+1}=-1,t_i \neq 0 = t_{i+1}} \sqrt{\mu_0(t_i)} \right) \right. \\
    & \quad \left(\prod_{b_i = 1, b_{i+1} = -1, t_i \neq 0 = t_{i+1}} \sqrt{\mu_0(s_{i+1}-t_i+1)} \sqrt{\mu_0(1+s_{i+1})} \right) \\
    & \quad \left(\prod_{b_i = 1, b_{i+1} = -1, t_i = 0 \neq t_{i+1}} \sqrt{\mu_0(1+s_i)} \sqrt{\mu_0(s_{i}-t_{i+1}+1)} \right) \\
    & \quad \left. \left(\prod_{i:b_i=1,b_{i+1}=-1, t_i=0=t_{i+1}} \mu_0(1+s_i) \right) \right\}
  \end{align*}
  
  Now, we distribute the sum of $s_i$ on each concerning term, 
  \begin{align*}
    & \prob{\tilde{\Front}_{n+1} = (b,t)}\\
    = & \frac{1}{Z_L} \left( \prod_{i: b_i=b_{i+1},t_i \neq 0 \neq t_{i+1}} \sqrt{\mu_0(|t_{i+1}-t_i|)} \right) \\
    & \hspace{-5mm}\left[ \prod_{i: b_i=1,b_{i+1}=-1, t_i \neq 0 \neq t_{i+1}} \left(\sum_{s > \max(t_i,t_{i+1})} \sqrt{\mu_0(s-t_i)} \sqrt{\mu_0(s-t_{i+1})} \right) \left( \sum_{s \geq 1} \mu_{|t_i-t_{i+1}|}(\min(t_i,t_{i+1}) + s) \right) \right] \\  
    &  \left( \prod_{i : b_i=b_{i+1}=1, t_i=0 \neq t_{i+1}} \sqrt{\mu_0(t_{i+1})} \right) \left( \prod_{i:b_i=b_{i+1}=-1,t_i \neq 0 = t_{i+1}} \sqrt{\mu_0(t_i)} \right) \\
    & \left(\prod_{b_i = 1, b_{i+1} = -1, t_i \neq 0 = t_{i+1}} \sum_{s_{i+1} > t_i-1} \sqrt{\mu_0(s_{i+1}-t_i+1)} \sqrt{\mu_0(1+s_{i+1})} \right) \\
    & \left(\prod_{b_i = b_{i+1} = -1, t_i = 0 \neq t_{i+1}} \sum_{s_{i} > t_{i+1}-1} \sqrt{\mu_0(1+s_i)} \sqrt{\mu_0(s_{i}-t_{i+1}+1)} \right) \\
    & \left(\prod_{i:b_i=1,b_{i+1}=-1, t_i=0=t_{i+1}} \sum_{s_i \geq 0} \mu_0(1+s_i) \right)
  \end{align*}
  
  By some changes of variables, we get
  \begin{align*}
    & \prob{\tilde{\Front}_{n+1} = (b,t)}\\
    = & \frac{1}{Z_L} \left( \prod_{i: b_i=b_{i+1},t_i \neq 0 \neq t_{i+1}} \sqrt{\mu_0(|t_{i+1}-t_i|)} \right) \\
    & \hspace{-5mm}\left[ \prod_{i: b_i=1,b_{i+1}=-1, t_i \neq 0 \neq t_{i+1}} \left(\sum_{s > \max(t_i,t_{i+1})} \sqrt{\mu_0(s-t_i)} \sqrt{\mu_0(s-t_{i+1})} \right) \left( \sum_{s \geq 1} \mu_{|t_i-t_{i+1}|}(\min(t_i,t_{i+1}) + s) \right) \right] \\  
    &  \left( \prod_{i : b_i=b_{i+1}=1, t_i=0 \neq t_{i+1}} \sqrt{\mu_0(t_{i+1})} \right) \left( \prod_{i:b_i=b_{i+1}=-1,t_i \neq 0 = t_{i+1}} \sqrt{\mu_0(t_i)} \right) \\
    & \left[\prod_{b_i = 1, b_{i+1} = -1, t_i \neq 0 = t_{i+1}} \left( \sum_{s > t_i} \sqrt{\mu_0(s-t_i)} \sqrt{\mu_0(s)}\right) \underbrace{\left(\sum_{s \geq 1} \mu_{|t_i|}(s) \right)}_{=1}\right] \\
    & \left[\prod_{b_i = 1, b_{i+1} = -1, t_i = 0 \neq t_{i+1}} \left( \sum_{s > t_{i+1}} \sqrt{\mu_0(s)} \sqrt{\mu_0(s-t_{i+1})} \right) \underbrace{\left(\sum_{s \geq 1} \mu_{|t_{i+1}|}(s) \right)}_{=1}\right] \\
    & \left[\prod_{i:b_i=1,b_{i+1}=-1, t_i=0=t_{i+1}} \left(\sum_{s \geq 1} \sqrt{\mu_0(s)} \sqrt{\mu_0(s)} \right) \underbrace{\left(\sum_{s \geq 1} \mu_{0}(s) \right)}_{=1}  \right] \\
    = & \tilde{\nu}_L((b,t))
  \end{align*}
\end{itemize}

That proves that $\tilde{\nu}_L$ is an invariant law of the Markov chain $(\tilde{\Front}_n)_{n \in \NN}$. And, because it is ergodic, we deduce that $\tilde{\nu}_L((b,t)) = \frac{1}{Z_L} W_{(b,t)}$ is the asymptotic law of $(\tilde{\Front}_n)_{n \in \NN}$. And, by projection on the first coordinate, that $\nu_L$ defined by $\nu_L(b) = \frac{1}{Z_L} \sum_{t \in \Time_b} W_{(b,t)}$ is the asymptotic law of $(\Front_n)_{n \in \NN}$ when \C~\ref{cond:int} holds.\qed

Theorem~\ref{thm:int-speed} is now just a corollary of Theorems~\ref{thm:meanspeed} and~\ref{thm:int}.

The main problem with this direct proof is that we do not understand from where the conjectured form~\eqref{eq:conj} comes from as well as~\C~\ref{cond:int}. In the next section, we reveal tools and ideas used to obtain these two formulas.

\section{How to conjecture~\eqref{eq:conj} and~\C~\ref{cond:int}} \label{sec:ideas} 
The main goal of this section is to explain ideas that permit us to conjecture the form~\eqref{eq:conj} of $\tilde{\nu}_L$ and the integrability condition~\C~\ref{cond:int}. For these, we use probabilistic cellular automata  (PCA) and adapt new results of PCA, see~\cite{BGM69,DKT90,CM15,Casse16,CM19} and references therein. Because, in this section, our goal is to establish a conjecture, already proved true on the previous section, we allow us to be sometimes less formal and to skip some proofs if that can clarify the ideas and avoid to lose the reader on some formal details. Nevertheless, we hope that this section enhances the reader by giving it an ``almost true'' alternative proof of Theorem~\ref{thm:int}. 

\subsection{Transformation of $\cyl{L}$ to $\tore{L} \times \NN$}
Before to start, we define a one-to-one transformation $\trans$ from $\tore{L} \times \NN$ to $\cyl{L}$ in the following way:
\begin{equation}
  \trans((x,y)) = (2x+y,y) \text{ and } \trans^{-1}((x,y)) = ((x-y)/2,y).
\end{equation}

This transformation is important because it is more natural to consider the LPP as a law on $\ZZ^{\cyl{L}}$ and the space-time diagram of a PCA as a law on $\ZZ^{\tore{L} \times \NN}$.

\subsection{PCA related to GLPP} \label{sec:PCALPP}
For any $\mu = (\mu_\Delta)_{\Delta \in \NN} \in \mprob{\NN^*}^\NN$, we consider the following PCA $A_{\mu}$ with $E=\ZZ$, $\LL = \tore{L}$, $N(i) = (i,i+1)$ and whose transitions are, for any $a,b,c \in \ZZ$,
\begin{equation} \label{eq:LPPPCA}
  T_{\mu}(a,b;c) = \mu_{|b-a|}(c - \max(a,b)). 
\end{equation}
To $A_\mu$, we associate a law $H_\mu$ on $\ZZ^{\tore{L} \times \NN}$ called its space-time diagram with initial law the Dirac law on $0^{\tore{L}}$: $(\eta((x,y)))_{(x,y) \in \tore{L} \times \NN} \sim H_\mu$ if $(\eta((x,y)) : x \in \tore{L})_{y \in \NN}$ is a Markov chain on $\ZZ^{\tore{L}}$ such that
\begin{itemize}
\item for any $x \in \tore{L}$, $\eta((x,0)) = 0$ and
\item for any $y \in \NN$, $(\eta((x,y+1)))_{x \in \tore{L}}$ is the image of $(\eta((x,y)))_{x \in \tore{L}}$ by $A_\mu$, i.e.\ for any $t = (t_x)_{x \in \tore{L}} \in \ZZ^{\tore{L}}$, 
  \begin{equation}
    \prob{(\eta((x,y+1)))_{x \in \tore{L}} = t} = \sum_{s = (s_x)_{x \in \tore{L}} \in \ZZ^{\tore{L}}} \prob{(\eta((x,y))_{x \in \tore{L}}) = s} \prod_{x \in \tore{L}} T_\mu(s_x,s_{x+1};t_x).
  \end{equation}
\end{itemize}

\begin{lemma} \label{lem:model}
  For any $\mu \in \mprob{\NN^*}^\NN$, denote by $G_\mu$ the law of $(\TL(z))_{z \in \cyl{L}}$, the GLPP on $\cyl{L}$ with parameter $\mu$ as defined in Section~\ref{sec:gLPPdef}. If $(\TL(z))_{z \in \cyl{L}} \sim G_\mu$ and $(\eta(z))_{z \in \tore{L} \times \NN} \sim H_\mu$, then
  \begin{equation}
    \left(\TL(z)\right)_{z \in \cyl{L}} \eqd \left(\eta \left(\trans^{-1}(z)\right)\right)_{z \in \cyl{L}}.
  \end{equation}
\end{lemma}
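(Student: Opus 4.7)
The plan is to prove the equality in distribution by induction on the row index $y\in\NN$, showing that under the change of coordinates $\trans^{-1}$ the recursive definition of the GLPP on $\cyl{L}$ matches exactly the one-step update rule of the PCA $A_\mu$. The key geometric observation is that for any $(x,y+1)\in\tore{L}\times\NN$, one has
\begin{equation*}
\trans((x,y+1))=(2x+y+1,y+1),
\end{equation*}
whose two ``parent'' cells in $\cyl{L}$ are $(2x+y,y)$ and $(2x+y+2,y)$; applying $\trans^{-1}$ these become $(x,y)$ and $(x+1,y)$, which is precisely the neighborhood $N(x)=(x,x+1)$ used by $A_\mu$.

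First I would verify the base case: by definition $\TL((2x,0))=0$ for every $x\in\tore{L}$, and since $\trans((x,0))=(2x,0)$, this matches $\eta((x,0))=0$ for all $x\in\tore{L}$. Hence $(\TL(\trans(x,0)))_{x\in\tore{L}}$ and $(\eta((x,0)))_{x\in\tore{L}}$ have the same (Dirac) law.

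Next I would carry out the inductive step. Set $\tilde{\TL}((x,y)):=\TL(\trans(x,y))$. By the recursion defining the GLPP, and the identification of neighbors above,
\begin{equation*}
\tilde{\TL}((x,y+1))=\max\bigl(\tilde{\TL}((x,y)),\tilde{\TL}((x+1,y))\bigr)+\tilde{\xi}_{(x,y+1)},
\end{equation*}
where $\tilde{\xi}_{(x,y+1)}=\xi_{\trans(x,y+1)}\sim \mu_{|\tilde{\TL}((x,y))-\tilde{\TL}((x+1,y))|}$, and the family $(\tilde{\xi}_{(x,y+1)})_{x\in\tore{L}}$ is independent given row $y$ (because $\trans$ is a bijection and the $(\xi_z)_{z\in\cyl{L}}$ are globally independent). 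Hence conditionally on $(\tilde{\TL}((x,y)))_{x\in\tore{L}}=s$, the row $(\tilde{\TL}((x,y+1)))_{x\in\tore{L}}$ has joint law
\begin{equation*}
\prob{(\tilde{\TL}((x,y+1)))_{x\in\tore{L}}=t\ \big|\ s}=\prod_{x\in\tore{L}}\mu_{|s_{x+1}-s_x|}(t_x-\max(s_x,s_{x+1}))=\prod_{x\in\tore{L}}T_\mu(s_x,s_{x+1};t_x),
\end{equation*}
which is exactly the transition kernel of $A_\mu$. Combined with the induction hypothesis on row $y$, this gives equality in law on row $y+1$.

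Finally I would assemble finite-dimensional marginals: for every finite set $K\subset\tore{L}\times\NN$, bounded by some height $Y$, the joint law of $(\tilde{\TL}(z))_{z\in K}$ and of $(\eta(z))_{z\in K}$ coincide because both are obtained by the same (measurable) function of the independent noises on rows $0,\ldots,Y$ whose conditional row-laws have just been shown to agree. By Kolmogorov's extension theorem this yields $(\tilde{\TL}(z))_{z\in\tore{L}\times\NN}\eqd(\eta(z))_{z\in\tore{L}\times\NN}$, which is precisely the claim after renaming $z\mapsto\trans^{-1}(z)$. There is no real obstacle here; the only care needed is bookkeeping on the bijection $\trans$ and the parity constraint $x+y=0\pmod 2$ implicit in the definition of $\cyl{L}$, which is automatically respected since $\trans(\tore{L}\times\NN)=\cyl{L}$.
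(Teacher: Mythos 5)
Your proof is correct and follows essentially the same route as the paper's: induction on the row index $y$, verification that the conditional law of row $y+1$ given row $y$ under $\trans^{-1}$ coincides with the one-step kernel $\prod_x T_\mu(s_x,s_{x+1};t_x)$ of $A_\mu$, and conclusion via Kolmogorov's extension theorem. The only (minor) difference is that you spell out the parent-cell computation $\trans^{-1}(2x+y,y)=(x,y)$, $\trans^{-1}(2x+y+2,y)=(x+1,y)$ explicitly, which the paper leaves implicit.
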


\begin{proof}
  The proof is done by induction on $y$. When $y=0$, for any $x \in \tore{L}$, $\eta((x,0)) = 0$ because the initial law is the Dirac one on $0^{\tore{L}}$ and $\TL(2x,0) = 0$ by~\eqref{eq:LPPinit}. That ends the case $y=0$.\par
  Now, we suppose that $\left(\eta \left(\trans^{-1}((x,y'))\right)\right)_{(x,y') \in \cyl{L}, y'\leq y} \eqd \left(\TL((x,y'))\right)_{(x,y') \in \cyl{L},y'\leq y}$.

  For any $t =(t_x)_{x \in \ZZL + (y+1 \mod 2)}$,
  \begin{align*}
     & \prob{(\TL((x,y+1)))_{x \in \ZZL + (y+1 \mod 2)}=t | (\TL((x,y')))_{(x,y') \in \cyl{L}, y'\leq y}} \\
    =\ & \prob{(\TL((x,y+1)))_{x \in \ZZL + (y+1 \mod 2) } = t | (\TL((x,y)))_{x \in \ZZL + (y \mod 2)}} \\
    =\ & \prod_{x \in \ZZL + (y+1 \mod 2)} \mu(t_x - \max((\TL(x-1,y-1)),\TL((x+1,y-1)))) & \text{ (cf~\eqref{eq:LPPdyn})} \\
    =\ & \prod_{x \in \ZZL + (y+1 \mod 2)} \TT_{\mu}(\TL((x-1,y-1)),\TL((x+1,y-1));t_x) \\
    =\ & \prod_{x \in \ZZL + (y+1 \mod 2)} \TT_{\mu}(\eta(\trans^{-1}((x-1,y-1))), \eta(\trans^{-1}((x+1,y-1))) ; t_x) \\
    =\ &  \prob{(\eta(\trans^{-1}((x,y+1))))_{x \in \ZZL + (y+1 \mod 2) } = t | (\eta(\trans^{-1}((x,y))))_{x \in \ZZL + (y \mod 2)}} \\
    =\ &  \prob{(\eta(\trans^{-1}((x,y+1))))_{x \in \ZZL + (y+1 \mod 2) } = t | (\eta(\trans^{-1}((x,y'))))_{x \in \ZZL + (y \mod 2),y'\leq y}}.
  \end{align*}
  Hence, by induction, for any $y \in \NN$, $\left(\eta\left(\trans^{-1}((x,y'))\right)\right)_{(x,y') \in \cyl{L}, y'\leq y} \eqd \left(\TL((x,y'))\right)_{(x,y') \in \cyl{L},y'\leq y}$. And then the Kolmogorov's extension theorem concludes the proof.
\end{proof}

\subsection{Integrable PCA} \label{sec:intPCA}
First, remark that PCA related to GLPP could not get an invariant probability measure because their values increase line by line. Nevertheless, we could use recent results and ideas developed in~\cite{DKT90,CM15,Casse16} about PCA whose one invariant probability measure is Markovian. Here, instead of finding invariant \emph{probability} measures, we will look for invariant measures, but \emph{not probabilistic}. Due to that, this section is dedicated to adaptations in that context of previous results that could be found in~\cite{CM15,CM19}.\par

Due to previous works~\cite{CM15,Casse16} about PCA, we focus on measures that have a particular form, introduced here and called cyclic-HZMM (cyclic-Horizontal Zigzag Markovian Measure). Indeed, cyclic-HZMC and HZMC (Horizontal Zigzag Markov Chain) are the only sets of probability measures for which there exist necessary and sufficient conditions that characterise PCA whose one invariant probability measure is in the set. We refer the interested reader in invariant cyclic-HZMC and HZMC of PCA to~\cite{CM15,Casse16,CM19}.
\begin{definition}
  Let $E$ be a discrete set and let $M^+$ and $M^-$ be two Markov kernels from $E$ to $E$ such that $M^+ M^- = M^- M^+$. The measure $\theta_{(M^-,M^+)}$ on $E^{\tore{L}} \times E^{\tore{L}}$ is the cyclic-HZMM with parameter $(M^-,M^+)$ if for any $(s;t) = (s_1,\dots,s_L;t_1,\dots,t_L) \in E^{\tore{L}}\times E^{\tore{L}}$,
  \begin{equation}
    \theta_{(M^-,M^+)}((s;t)) = \prod_{i \in \tore{L}} M^-(s_i;t_i) M^+(t_i;s_{i+1}).
  \end{equation}
\end{definition}  

An illustration is given in Figure~\ref{fig:HZMC}.
\begin{figure}
  $\left. {\begin{minipage}[c]{12cm} \begin{center} \includegraphics{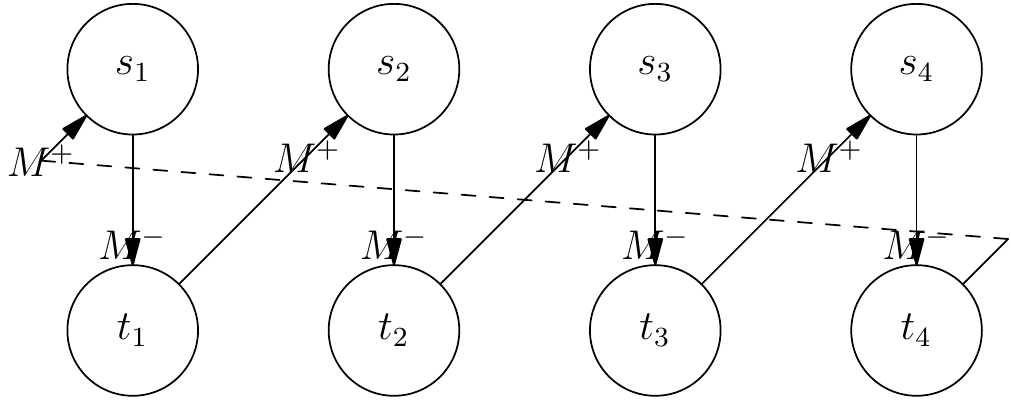} \end{center} \end{minipage}} \right \}$ weight is $\theta_{(M^-,M^+)}$
  \caption{An illustration of the cyclic-HZMM $\theta_{(M^-,M^+)}$ with $L=4$.}
  \label{fig:HZMC}
\end{figure}

\begin{lemma}
  For any $(M^-,M^+)$, $\theta_{(M^-,M^+)}$ is unique and $\theta_{(M^-,M^+)}$ is $\sigma$-finite.
\end{lemma}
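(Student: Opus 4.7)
The plan is to exploit the discreteness of the underlying set. First, since $E$ is discrete (hence at most countable) and $\tore{L}$ is finite, the product $E^{\tore{L}} \times E^{\tore{L}}$ is itself at most countable. I would equip it with the discrete $\sigma$-algebra so that all subsets are measurable and any positive measure on it is determined by its values on singletons.

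For uniqueness, any measure on an at most countable set endowed with the discrete $\sigma$-algebra is completely determined by its values on singletons via countable additivity. Since the defining formula prescribes the value of $\theta_{(M^-,M^+)}$ on every singleton $(s;t)$, and the prescribed values are nonnegative, this yields a (unique, positive, $\sigma$-additive) measure; extending it to arbitrary subsets by summation poses no difficulty.

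For $\sigma$-finiteness, I would use the fact that $M^-$ and $M^+$ are Markov kernels, so each factor $M^{\pm}(\cdot;\cdot)$ lies in $[0,1]$. Hence every singleton has mass bounded by $1$, in particular finite. Since $E^{\tore{L}} \times E^{\tore{L}}$ is at most countable, it admits a countable partition into singletons of finite measure, which is exactly $\sigma$-finiteness. The only point worth flagging is that the measure need not be finite: for instance, if $E = \NN$ and both $M^-$, $M^+$ are the identity kernel $M^{\pm}(s;t) = \mathbf{1}_{s=t}$, then $\theta_{(M^-,M^+)}$ assigns mass $1$ to every constant configuration $s_i = t_i = c$, $c \in \NN$, so the total mass equals $|E| = \infty$. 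Consequently $\sigma$-finiteness, rather than finiteness, is the correct notion, and there is no genuine technical obstacle: the statement is essentially bookkeeping that confirms the definition of cyclic-HZMM is well-posed in this generalised (non-probabilistic) setting.
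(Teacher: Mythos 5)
Your proof is correct and takes essentially the same approach as the paper, which simply remarks that uniqueness is obvious and that $\sigma$-finiteness follows from the discreteness of $E$. You have merely spelled out the bookkeeping (measures on a countable discrete space are determined by singleton masses, each of which is finite here), together with a helpful example showing the measure need not be finite.
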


\begin{proof}
  The first point is obvious and the second one also because $E$ is discrete.
\end{proof}

\begin{lemma} \label{lem:stable}
  Let $E$ be a discrete set. Let $A$ be a PCA with transition $T$ and $(M^-,M^+)$ be a couple of transition matrices from $E$ to $E$. $\theta_{(M^-,M^+)}$ is an invariant cyclic-HZMM of $A$ iff, for any $s,t,u \in E$, 
  \begin{equation} \label{eq:stable}
    (M^- M^+)(s;t) T(s,t;u) = M^-(s;u)M^+(u;t).
  \end{equation}
\end{lemma}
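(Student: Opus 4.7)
The strategy is to decode what invariance of $\theta_{(M^-,M^+)}$ means in terms of factor-wise identities on the two kernels, and then extract the pointwise relation~\eqref{eq:stable} by marginalising coordinates.

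First I compute the marginal of $\theta_{(M^-,M^+)}$ on the bottom row. Using that, for each $i$, $\sum_{t_i}M^-(s_i;t_i)M^+(t_i;s_{i+1})=(M^-M^+)(s_i;s_{i+1})$, I get
\begin{equation*}
\mu(s):=\sum_{t\in E^{\tore{L}}}\theta_{(M^-,M^+)}((s;t))=\prod_{i\in\tore{L}}(M^-M^+)(s_i;s_{i+1}).
\end{equation*}
A cyclic re-indexing of the product together with the commutation hypothesis $M^-M^+=M^+M^-$ yields the same expression for the marginal on the top row. Hence asking that $\theta_{(M^-,M^+)}$ be an invariant cyclic-HZMM of $A$ amounts to the product identity
\begin{equation*}
\prod_{i\in\tore{L}}(M^-M^+)(s_i;s_{i+1})\,T(s_i,s_{i+1};u_i)=\prod_{i\in\tore{L}}M^-(s_i;u_i)M^+(u_i;s_{i+1})
\end{equation*}
holding for every $s,u\in E^{\tore{L}}$.

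The $(\Leftarrow)$ direction is then immediate: if~\eqref{eq:stable} holds triple-wise, each factor on the left of the product identity matches the corresponding factor on the right, so the products coincide and $\theta_{(M^-,M^+)}$ is invariant. For the $(\Rightarrow)$ direction I localise to a single index. Fix $i_0\in\tore{L}$ and sum both sides of the product identity over $(u_j)_{j\ne i_0}$: on the left, $\sum_{u_j}T(s_j,s_{j+1};u_j)=1$ for $j\ne i_0$; on the right, $\sum_{u_j}M^-(s_j;u_j)M^+(u_j;s_{j+1})=(M^-M^+)(s_j;s_{j+1})$. After simplification I obtain
\begin{equation*}
\left(\prod_{i\ne i_0}(M^-M^+)(s_i;s_{i+1})\right)\bigl[(M^-M^+)(s_{i_0};s_{i_0+1})\,T(s_{i_0},s_{i_0+1};u_{i_0})-M^-(s_{i_0};u_{i_0})M^+(u_{i_0};s_{i_0+1})\bigr]=0.
\end{equation*}
For a target triple $(s,t,u)$ with $(M^-M^+)(s;t)>0$, I pick the auxiliary coordinates $(s_i)_{i\ne i_0,i_0+1}$ so that the prefactor is strictly positive and cancel it to obtain~\eqref{eq:stable}. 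In the complementary case $(M^-M^+)(s;t)=0$, the left-hand side of~\eqref{eq:stable} already vanishes, and since $(M^-M^+)(s;t)=\sum_u M^-(s;u)M^+(u;t)$ is a sum of non-negative terms, each summand $M^-(s;u)M^+(u;t)$ vanishes too, so the right-hand side is $0$ as well.

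The main obstacle is the path-finding step in the $(\Rightarrow)$ direction: selecting $(s_i)_{i\ne i_0,i_0+1}$ with positive prefactor amounts to building a closed cycle of length $L-1$ in the support graph of $M^-M^+$ that connects $t$ back to $s$. In the applications relevant here, where the kernels $M^\pm$ are irreducible (and aperiodic) on the relevant support, this is routine; in full generality it is a mild regularity assumption on $M^\pm$ or on $L$ that the statement tacitly requires.
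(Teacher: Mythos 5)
Your proof is correct and is essentially the standard argument the paper delegates to the cited references (DKT90, CM15, Casse16) without reproducing: compute the row marginal of the cyclic-HZMM, $\prod_i(M^-M^+)(s_i;s_{i+1})$, use the commutation $M^-M^+=M^+M^-$ to identify the top and bottom marginals, reformulate invariance as the pointwise product identity, match factor-by-factor for the $(\Leftarrow)$ direction, and localise by summing out all but one site for $(\Rightarrow)$. Your treatment of the degenerate case $(M^-M^+)(s;t)=0$ is also correct: since $(M^-M^+)(s;t)=\sum_u M^-(s;u)M^+(u;t)$ is a sum of non-negative terms, every summand vanishes and both sides of~\eqref{eq:stable} are zero.

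The caveat you flag at the end of the $(\Rightarrow)$ direction is genuine, not cosmetic. As literally stated, the lemma relies on a tacit positivity or irreducibility hypothesis: one needs to be able to complete any edge $s\to t$ of the support graph of $M^-M^+$ into a closed cycle of length $L$ with positive $(M^-M^+)$-weight, otherwise the product identity never constrains $T(s,t;\cdot)$ while~\eqref{eq:stable} still does. Taking $M^-,M^+$ to be commuting deterministic shifts on a cyclic alphabet with period incompatible with $L$ makes $\theta_{(M^-,M^+)}\equiv 0$, which is invariant under every $T$, yet~\eqref{eq:stable} remains a nontrivial constraint on $T$, so $(\Rightarrow)$ can literally fail in such degenerate setups. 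For the kernels the paper actually uses (see~\eqref{eq:MM}), $(M^-M^+)(s;t)>0$ for all $s,t\in\ZZ$, so the cycle-closing step is immediate and your argument is complete there; adding the positivity hypothesis to the lemma statement, or restricting it to the relevant class of $(M^-,M^+)$, would make it precise.
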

Such a PCA is called an integrable PCA and we say that $\theta_{(M^-,M^+)}$ is invariant by $A$.
  
\begin{proof}
  Proof of this lemma is the same as the ones of~\cite[Lemma~16.2]{DKT90},~\cite[Theorem~2.3]{CM15} and \cite[Theorem~1]{Casse16}.
\end{proof}

\begin{remarks}
  \begin{itemize}
  \item We could work with any measure proportional to $\theta_{(M^-,M^+)}$ because they are all invariant by $A$.
  \item In Remark~\ref{rem:alpha}, we show an example of two HZMM that are proportional but with different kernels, i.e.\ $\theta_{(M^-,M^+)} = k \theta_{(N^-,N^+)}$ with $(M^-,M^+) \neq (N^-,N^+)$.
  \end{itemize}
\end{remarks}

From~\eqref{eq:stable}, we obtain the following necessary condition: for any $s,t,u,s',t',u' \in E$ such that $(M^-M^+)(s;t) > 0$, $(M^-M^+)(s;t') > 0$, $(M^-M^+)(s';t) > 0$ and $(M^-M^+)(s';t') > 0$,
\begin{equation} \label{eq:Belyaev}
  T(s,t;u) T(s',t';u) T(s',t;u') T(s,t';u') = T(s',t';u') T(s,t;u') T(s,t';u) T(s',t;u).
\end{equation}
This condition is a very well-known condition in the integrable PCA literature. It was found first in~\cite{BGM69} when $E = 2$, and extended for any finite $E$ in~\cite{CM15}, and for any Polish space $E$ in~\cite{Casse16}.\par
\medskip

We finish this section in a very informal way. Indeed, we use notations as if we manipulate probabilistic measures whereas we are manipulating $\sigma$-finite measures that are not probabilistic.

First, we need to adapt the definition of the space-time diagram of a PCA (defined in Section~\ref{sec:PCALPP}) to see it as a $\sigma$-finite measure on $E^{\cyl{L}}$, but not necessarily probabilistic. Let $A$ be a PCA and $\theta$ be any $\sigma$-finite measure on $E^{\tore{L}} \times E^{\tore{L}}$, the space-time diagram of $A$ under its initial measure $\theta$ is the (formally, we should say ``a'' because uniqueness is not proved) measure $H_{A,\theta}$ on $E^{\tore{L} \times \NN}$ such that if $(\eta((x,y)))_{(x,y) \in \tore{L} \times \NN} \sim H_{A,\theta}$ then, for any $y \geq 1$, the measure of $(\eta((x,y')))_{x \in \tore{L}, 0 \leq y' \leq y}$ is
\begin{equation} \label{eq:HAT}
  \theta( (\eta((x,0)))_{x \in \tore{L}} , (\eta((x,1)))_{x \in \tore{L}} ) \prod_{y' =1}^{y-1} \prod_{x \in \tore{L}} T( \eta((x,y')), \eta((x+1,y')); \eta((x,y'+1))).
\end{equation}

In the following, we are mostly interested when $A$ is an integrable PCA and $\theta$ is its invariant cyclic-HZMM. Indeed, in that case, we are able to give the (non probabilistic) measure of times on any bridge. For any $b = (b_i)_{i \in \ZZL} \in \Bridges_L$ and $z = (x,y) \in \tore{L} \times \NN$, the bridge $b$ with origin $z$, denoted by $\BB_{(b,z)}$, is the sequence of vertices $(x_i,y_i)_{i \in \ZZL}$ such that $(x_1,y_1) = (x,y)$ and, for any $i \in \ZZL$,
\begin{equation}
  (x_{i+1},y_{i+1}) = \begin{cases}
    (x_i,y_i+1) & \text{if } b_i = -1;\\
    (x_i+1,y_i-1) & \text{if } b_i = +1.
  \end{cases}
\end{equation}
Note that we need a condition on $b$ and $z = (x,y)$ to get $\BB_{(b,z)}$ entirely contained on $\tore{L} \times \NN$. The condition is, for any $j$, $y - \sum_{i=1}^j b_i \geq 0$.

\begin{remark}
  Let $A$ be an integrable PCA and $\theta_{(M^-,M^+)}$ be one of its invariant cyclic-HZMM. Let $\eta = (\eta_z)_{z \in \ZZ/L\ZZ \times \NN} \sim H_{A,\theta_{(M^-,M^+)}}$. For $b \in \Bridges_L$ and any $z = (x,y) \in \tore{L} \times \NN$ such that, for any $j$, $y - \sum_{i=1}^j b_i \geq 0$, for any $(t_i)_{i \in \ZZL} \in \ZZ^{\ZZL}$,
  \begin{equation} \label{eq:HAT-bridges}
    H_{A,\theta_{(M^-,M^+)}} \left( \left\{(\eta_{z'})_{z'\in \BB_{(b,z)}} = (t_i)_{i \in \ZZL}\right\} \right) = \prod_{i:b_i=1}M^+(t_i,t_{i+1}) \prod_{i:b_i=-1}M^-(t_i,t_{i+1}).
  \end{equation}
  This is illustrated in Figure~\ref{fig:flip}. This remark is the counterpart of~\cite[Proposition~27]{CM19} when we consider $\sigma$-finite measures instead of probabilistic measures.
\end{remark}

\begin{figure}
  \begin{center} \includegraphics{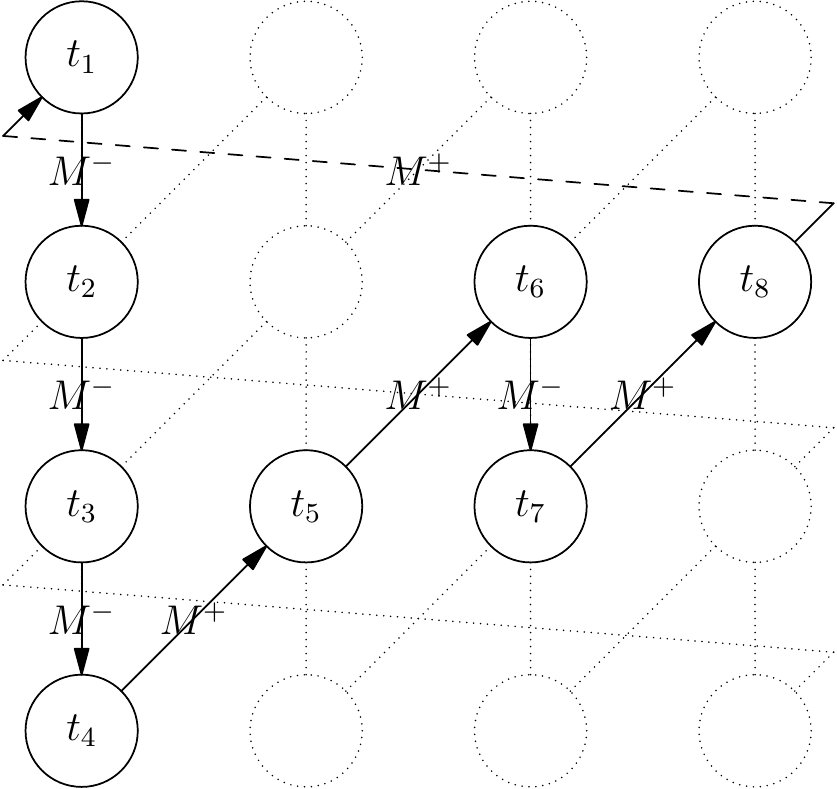} \end{center}
  \caption{The measure to get $(t_1,\dots,t_8)$ along a line $(-1,-1,-1,1,1,-1,1,1)$ is $M^-(t_1,t_2)M^-(t_2,t_3)M^-(t_3,t_4)M^+(t_4,t_5)M^+(t_5,t_6)M^-(t_6,t_7)M^+(t_7,t_8)M^+(t_8,t_1)$.}
  \label{fig:flip}
\end{figure}

\subsection{Integrable GLPP} \label{sec:intLPP}
First, we explain from where the integrable condition~\C~\ref{cond:int} on $(\mu_{\Delta})_{\Delta \in \NN}$ comes.
For PCA related to GLPP (see~\eqref{eq:LPPPCA}), the equation~\eqref{eq:Belyaev} implies (by taking $s=t=0 \leq s'=t'= \Delta < u = \Delta +1 \leq u'= \Delta + v$): for any $\Delta\in \NN$, $v \in \NN^*$,
\begin{equation} 
  \mu_{0}(\Delta+1) \mu_0(1) \mu_{\Delta}(v) \mu_{\Delta}(v) = \mu_0(v) \mu_0(\Delta+v) \mu_\Delta(1) \mu_\Delta(1).
\end{equation}
In particular, for any $\Delta \in \NN$, $v \in \NN^*$,
\begin{equation}
  \mu_{\Delta}(v) = \frac{\mu_\Delta(1)}{\sqrt{\mu_0(1) \mu_0(\Delta+1)}} \sqrt{\mu_0(v) \mu_0(\Delta+v)},
\end{equation}
and using the fact that $\displaystyle \sum_{v \in \NN^*} \mu_{\Delta}(v) = 1$, we obtain~\C~\ref{cond:int}.

Now let us find one couple $(M^-,M^+)$ that is associated to an integrable GLPP.
\begin{proposition} \label{prop:FB}
  Let $\mu_0 \in \mprob{\NN^*}$ be such that $\sum_{v \in \NN^*} \sqrt{\mu_0(v)} < \infty$. Let's define $(\mu_\Delta)_{\Delta \in \NN}$ by~\C~\ref{cond:int}. Let $A_\mu$ be the integrable PCA related to the GLPP with parameter $\mu$, see~\eqref{eq:LPPPCA}. Let's define $(M^-,M^+)$ such that, for any $s,t,u \in \ZZ$,
  \begin{equation} \label{eq:MM}
    M^-(s;u) = \frac{\sqrt{\mu_0(u-s)}}{\sum_{v \in \NN^*} \sqrt{\mu_0(v)}} \text{ and }
    M^+(u;t) = \frac{\sqrt{\mu_0(u-t)}}{\sum_{v \in \NN^*} \sqrt{\mu_0(v)}}.
  \end{equation}
  Then, $\theta_{(M^-,M^+)}$ is an invariant measure of $A_{\mu}$.
\end{proposition}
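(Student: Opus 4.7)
The plan is to apply Lemma~\ref{lem:stable}: it suffices to check that $M^-$ and $M^+$ are Markov kernels, that $M^+ M^- = M^- M^+$, and that the defining identity~\eqref{eq:stable} holds. Write $K = \sum_{v \in \NN^*} \sqrt{\mu_0(v)}$, which is finite by assumption. Row-stochasticity of $M^-$ is immediate since $\sum_{u} M^-(s;u) = \sum_{u > s} \frac{\sqrt{\mu_0(u-s)}}{K} = \frac{1}{K}\sum_{v \geq 1}\sqrt{\mu_0(v)} = 1$, and similarly for $M^+$.

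For commutation, I would compute both products in closed form. In $(M^-M^+)(s;t) = \sum_{u'} M^-(s;u')M^+(u';t)$ the term vanishes unless $u' > \max(s,t)$, so the substitution $w = u' - \max(s,t)$ yields
\begin{equation}
  (M^-M^+)(s;t) = \frac{1}{K^2}\sum_{w \geq 1} \sqrt{\mu_0(w)\,\mu_0(w+|s-t|)},
\end{equation}
using that $\{u'-s, u'-t\} = \{w, w+|s-t|\}$ as an unordered pair. For $(M^+M^-)$ the analogous substitution $w = \min(s,t) - u'$ (with $u' < \min(s,t)$) produces exactly the same expression, so $M^+M^- = M^-M^+$.

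For~\eqref{eq:stable}, note that by~\C~\ref{cond:int},
\begin{equation}
  T_\mu(s,t;u) = \mu_{|s-t|}(u - \max(s,t)) = \frac{\sqrt{\mu_0(u-\max(s,t))\,\mu_0(u-\max(s,t)+|s-t|)}}{\sum_{w \geq 1}\sqrt{\mu_0(w)\,\mu_0(w+|s-t|)}}.
\end{equation}
The key observation is the identity $u - \max(s,t) + |s-t| = u - \min(s,t)$, which gives $\sqrt{\mu_0(u-\max(s,t))\,\mu_0(u-\min(s,t))} = \sqrt{\mu_0(u-s)\,\mu_0(u-t)}$. Multiplying by the closed form of $(M^-M^+)(s;t)$ obtained above, the denominator cancels and one is left with
\begin{equation}
  (M^-M^+)(s;t)\, T_\mu(s,t;u) = \frac{\sqrt{\mu_0(u-s)\,\mu_0(u-t)}}{K^2} = M^-(s;u)\,M^+(u;t),
\end{equation}
which is exactly~\eqref{eq:stable}. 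An appeal to Lemma~\ref{lem:stable} concludes.

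There is essentially no hard step here: once~\C~\ref{cond:int} and the square-root form of $M^\pm$ are put on the table, everything reduces to bookkeeping with $\max$, $\min$ and $|\cdot|$. The only point requiring care is the combinatorial identity $\{u-s, u-t\} = \{u-\max(s,t), u-\min(s,t)\}$ that symmetrises the numerator of $T_\mu$; once this is noted, both commutation and the invariance identity collapse onto the same symmetric expression $\sqrt{\mu_0(u-s)\mu_0(u-t)}/K^2$.
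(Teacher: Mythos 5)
Your proof is correct and follows essentially the same route as the paper: apply Lemma~\ref{lem:stable} and verify~\eqref{eq:stable} by direct computation using~\C~\ref{cond:int}. The only differences are presentational — you avoid the paper's case split $s\leq t$ versus $s\geq t$ by working with the unordered pair $\{u-s,u-t\}=\{u-\max(s,t),u-\min(s,t)\}$, and you spell out both the row-stochasticity of $M^\pm$ and the commutation $M^-M^+=M^+M^-$ (which the paper dispatches as "a similar computation"); these are minor improvements in rigor, not a different argument.
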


\begin{proof}
  To prove that it is invariant, we use Lemma~\ref{lem:stable}. We consider the case $s \leq t$, the case $s \geq t$ is similar. For any $s \leq t < u$,
  \begin{align*}
    & (M^- M^+)(s;t) T(s,t;u) \\
    =\ & \left( \sum_{u > t} \frac{\sqrt{\mu_0(u-s)}}{\sum_{v \geq 1} \sqrt{\mu_0(v)}} \frac{\sqrt{\mu_0(u-t)}}{\sum_{v \geq 1} \sqrt{\mu_0(v)}} \right) \mu_{t-s}(u - t) \\
    =\ & \frac{\sum_{v \geq 1} \sqrt{\mu_0(v + t - s) \mu_0(v+t-t)}}{\left( \sum_{v \geq 1} \sqrt{\mu_0(v)} \right) \left(\sum_{v \geq 1} \sqrt{\mu_0(v)}\right)} \frac{\sqrt{\mu_0(u-s) \mu_0(u-t)}}{\sum_{v \geq 1} \sqrt{\mu_0(v) \mu_0(v + t -s)}} \\
    =\ & \frac{\sqrt{\mu_0(u-s)}}{\sum_{v \geq 1} \sqrt{\mu_0(v)}} \frac{\sqrt{\mu_0(u-t)}}{\sum_{v \leq 1} \sqrt{\mu_0(v)}}\\
    =\ & M^-(s;u) M^+(u;t).
  \end{align*}
  The fact that $M^- M^+ = M^+ M^-$ is a similar computation.
\end{proof}

Here, because $M^-(s,u) = M^-(s+v,u+v)$ and $M^+(u,t) = M^+(u+v,t+v)$, we can express this invariant measure $\theta_{(M^-,M^+)}$ accordingly to the counting measure on $\ZZ$ and the following probability measure $\theta'_{(M^-,M^+)}$ on $\ZZ^{L-1} \times \ZZ^L$:
\begin{equation}
  \theta'_{(M^-,M^+)}((s_2,s_3,\dots,s_L;t_1,t_2,\dots,t_L)) = \frac{1}{Z} M^+(t_L;0) M^-(0;t_1) \prod_{i=2}^L M^+(t_{i-1};s_i) M^-(s_i;t_i)
\end{equation}
where $\displaystyle Z = \sum_{\ZZ^{L-1} \times \ZZ^L} M^+(t_L;0) M^-(0;t_1) \prod_{i=2}^L M^+(t_{i-1};s_i) M^-(s_i;t_i)$, so
\begin{equation}
  \theta_{(M^-,M^+)}((s_1,\dots,s_L;t_1,\dots,t_L)) = Z \sum_{u \in \ZZ}\ind{s_1=u} \theta'_{(M^+,M^-)}((s_2-u,s_3-u,\dots,s_L-u;t_1-u,t_2-u,\dots,t_L-u)).
\end{equation}

\begin{remark}
  The value $Z \leq 1$ and so finite. Indeed, for any $t_L \in \ZZ$, $M^+(t_L;0) \leq 1$, so
  \begin{align*}
    & \sum_{\ZZ^{L-1} \times \ZZ^L} M^+(t_L;0) M^-(0;t_1) \prod_{i=2}^L M^+(t_{i-1};s_i) M^-(s_i;t_i) \\
    \leq & \sum_{\ZZ^{L-1} \times \ZZ^L} M^-(0;t_1) \prod_{i=2}^L M^+(t_{i-1};s_i) M^-(s_i;t_i) \\
    = & \sum_{t_1 \in \ZZ}  M^-(0;t_1) \sum_{s_2 \in \ZZ} M^+(t_1;s_2) \dots \underbrace{\sum_{s_L \in \ZZ} M^+(t_{L-1};s_L) \underbrace{\sum_{t_L \in \ZZ} M^-(s_L;t_L)}_{= 1}}_{=1} \\
    = & 1.
  \end{align*}
\end{remark}

With $\theta'_{(M^-,M^+)}$, we can give another expression of $c_L$. 
Indeed, the probability that $\TL((1,1)) = t_1$ under the ergodic measure when $\TL((0,0)) = 0$ is
\begin{equation}
  \frac{1}{Z} \sum_{s_2,\dots,s_n,t_2,\dots,t_n \in \ZZ} \theta'_{(M^-,M^+)}((s_2,s_3,\dots,s_L;t_1,t_2,\dots,t_L))
\end{equation}
and, by~\eqref{eq:MM},
\begin{equation}
  \theta'_{(M^-,M^+)}((s_2,s_3,\dots,s_L;t_1,t_2,\dots,t_L)) = \frac{\sqrt{\mu_0(t_1) \left( \prod_{i=2}^L \mu_0(t_{i-1} - s_i) \mu_0(t_i - s_i) \right) \mu_0(t_L)}}{Z \left(\sum_{s \geq 1} \sqrt{\mu_0(s)}\right)^{2L}}.
\end{equation}
From these two equations, we can deduce the value of
\begin{equation}
  \esp{\edgetime_0} = \sum_{t_1 \in \NN} \frac{t_1}{Z'} \sqrt{\mu_0(t_1) \left( \prod_{i=2}^L \mu_0(t_{i-1} - s_i) \mu_0(t_i - s_i) \right) \mu_0(t_L)}
\end{equation}
where $Z' = \sum_{(s_2,s_3,\dots,s_L;t_1,\dots,t_L) \in \ZZ^{L-1} \times \ZZ^L} \sqrt{\mu_0(t_1) \left( \prod_{i=2}^L \mu_0(t_{i-1} - s_i) \mu_0(t_i - s_i) \right) \mu_0(t_L)}$ and $\edgetime_0$ is the same notation as the one used on Section~\ref{sec:proofmeanspeed}, and then
\begin{equation} \label{eq:int-speed-alt}
  \meanspeed_L = \frac{\displaystyle \sum_{t_1,s_2,t_2,\dots,s_L,t_L \in \ZZ} \sqrt{\mu_0(t_1) \left( \prod_{i=2}^L \mu_0(t_{i-1} - s_i) \mu_0(t_i - s_i) \right) \mu_0(t_L) } }{\displaystyle \sum_{t_1,s_2,t_2,\dots,s_L,t_L \in \ZZ} t_1 \sqrt{\mu_0(t_1) \left( \prod_{i=2}^L \mu_0(t_{i-1} - s_i) \mu_0(t_i - s_i) \right) \mu_0(t_L) }}.
\end{equation}
This gives a different expression of $c_L$ as a function of $\mu_0$ than the one of~\eqref{eq:int-speed}.\par
\medskip

\begin{remark} \label{rem:alpha}
  For any $\alpha \in (0,\infty)$, if we define, for any $s,t \in \ZZ$ with $t > s$,
  \begin{equation}
    M^-_\alpha(s;t) = \frac{\sqrt{\mu_0(t-s)} \alpha^{t-s}}{\sum_{v \in \NN^*} \sqrt{\mu_0(v)} \alpha^{v}} \text{ and } M^+_\alpha(t;s)  = \frac{\sqrt{\mu_0(t-s)} \alpha^{-(t-s)}}{\sum_{v \in \NN^*} \sqrt{\mu_0(v)} \alpha^{-v}}.
  \end{equation}
  We can check that, for any $\alpha \in (0,\infty)$, $\theta_{(M^-_\alpha,M^+_\alpha)}$ and $\theta_{(M^-_{1/2},M^+_{1/2})}$ are proportional and are invariant measures of $A_{\mu}$.\par
  This remark is not important for the GLPP on the cylinders, but more important for the study of the GLPP on the half-plane. In that case, $\alpha$ parameterises some of the invariant probability measures invariant by translation (the parameter $\alpha$ is then related to the mean slope of the front line), and even maybe all of them. Nowadays, we are not able to answer the last remark, because there exist few works about ergodicity of PCA and, in particular, nothing about the one of that kind of PCA. We suggest the reading of~\cite{Vasilyev78},~\cite{BMM13} and~\cite{DLR02} where one can find the three leading ideas about ergodicity of PCA.\par
  Parameterising invariant measures with $\alpha$ could also play a role to study the GLPP on the quarter-plane.
\end{remark}

\subsection{From PCA to $\nu_L$} \label{sec:PCAnu}
Now, we are about to conclude about explanations of how we have conjectured formula~\eqref{eq:wbt}. Let $\mu_0 \in \mprob{\NN^*}$ be such that~\C~\ref{cond:conv} holds. Define $\mu_{\Delta}$ by~\C~\ref{cond:int}, $T_{\mu}$ by~\eqref{eq:LPPPCA}, and $M^-$ and $M^+$ by~\eqref{eq:MM}. Consider the measure $H_{A,\theta_{(M^-,M^+)}}$ on $\ZZ^{\tore{L} \times \NN}$ defined by~\eqref{eq:HAT}.\par
Let $(b,t) \in \tilde{\Bridges}_L$. In this section, we look at the ``probability'' that $\tilde{F}_n$, the front line at time $n$, is $(b,t)$ on $\left(\TL(z) \right)_{z \in \cyl{L}} = \left( \eta(\trans^{-1}(z)) \right)_{z \in \cyl{L}}$ when the measure of $\left(\eta(z)\right)_{z \in \tore{L} \times \NN}$ is $H_{A,\theta_{(M^-,M^+)}}$ and when this front line is contained on $\cyl{L}$.\par

To do that, just consider what happens for the PCA. For the PCA (so on $\eta$ under $H_{A,\theta_{(M^-,M^+)}}$), that consists of having one of its bridges $B_{(b,(0,y))} = ((x_1,y_1)=(0,y),(x_2,y_2),\dots,(x_{2L},y_{2L}))$ with the following properties: for any $i \in \ZZL$,
\begin{enumerate}
\item $\eta((x_i,y_i)) = n - t_i$, if $b_i=1$,
\item $\eta((x_i,y_i)) = n - t_{i-1}$, if $b_{i-1}=-1$,
\item $\eta((x_i,y_i)) < \min(n-t_{i-1},n-t_i)$, if $b_{i-1}=1,b_i=-1$, and
\item $\eta((x_i,y_i) + (0,1)) > n$.
\end{enumerate}
But, the last condition is equivalent to: for $i$ such that $(b_{i-1}=1,b_i=-1)$, $\eta((x_i,y_i) + (0,1)) > n$. Indeed, the GLPP construction implies that if it is true for all $i$ such that $(b_{i-1}=1,b_i=-1)$, it is then true for any $i$.

Now, by~\eqref{eq:HAT-bridges}, the measure of a bridge that satisfies conditions 1, 2 and 3 is
\begin{align}
  & \left( \prod_{i: b_i=b_{i+1}} \frac{\sqrt{\mu_0(|(n-t_{i+1})-(n-t_i)|)}}{\sum_{s \geq 1} \sqrt{\mu_0(s)}} \right) \nonumber \\
  & \quad \left( \prod_{i: b_i=1,b_{i+1}=-1} \left(\sum_{u \in \ZZ, u < \min(n-t_i,n-t_{i+1})} \frac{\sqrt{\mu_0((n-t_i)-u)}}{\sum_{s \geq 1} \sqrt{\mu_0(s)}} \frac{\sqrt{\mu_0((n-t_{i+1})-u)}}{\sum_{s \geq 1} \sqrt{\mu_0(s)}}\right)\right) \label{eq:red}
\end{align}
and to add condition 4, we multiply by
\begin{equation} \label{eq:blue}
  \prod_{i : b_i=1,b_{i+1}=-1} \sum_{s > n} T(n-t_i,n-t_{i+1};s) = \prod_{i : b_i=1,b_{i+1}=-1} \sum_{v > n} \mu_{|t_i-t_{i+1}|}(v - \max(n-t_i,n-t_{i+1})).
\end{equation}

By simplification and by the changes of variables $u' = n-u$ and $v' = v-n$ in their respective sums, we obtain the conjectured formula~\eqref{eq:wbt}.

This is illustrated in Figure~\ref{fig:explanation}.

\begin{figure}
  \includegraphics{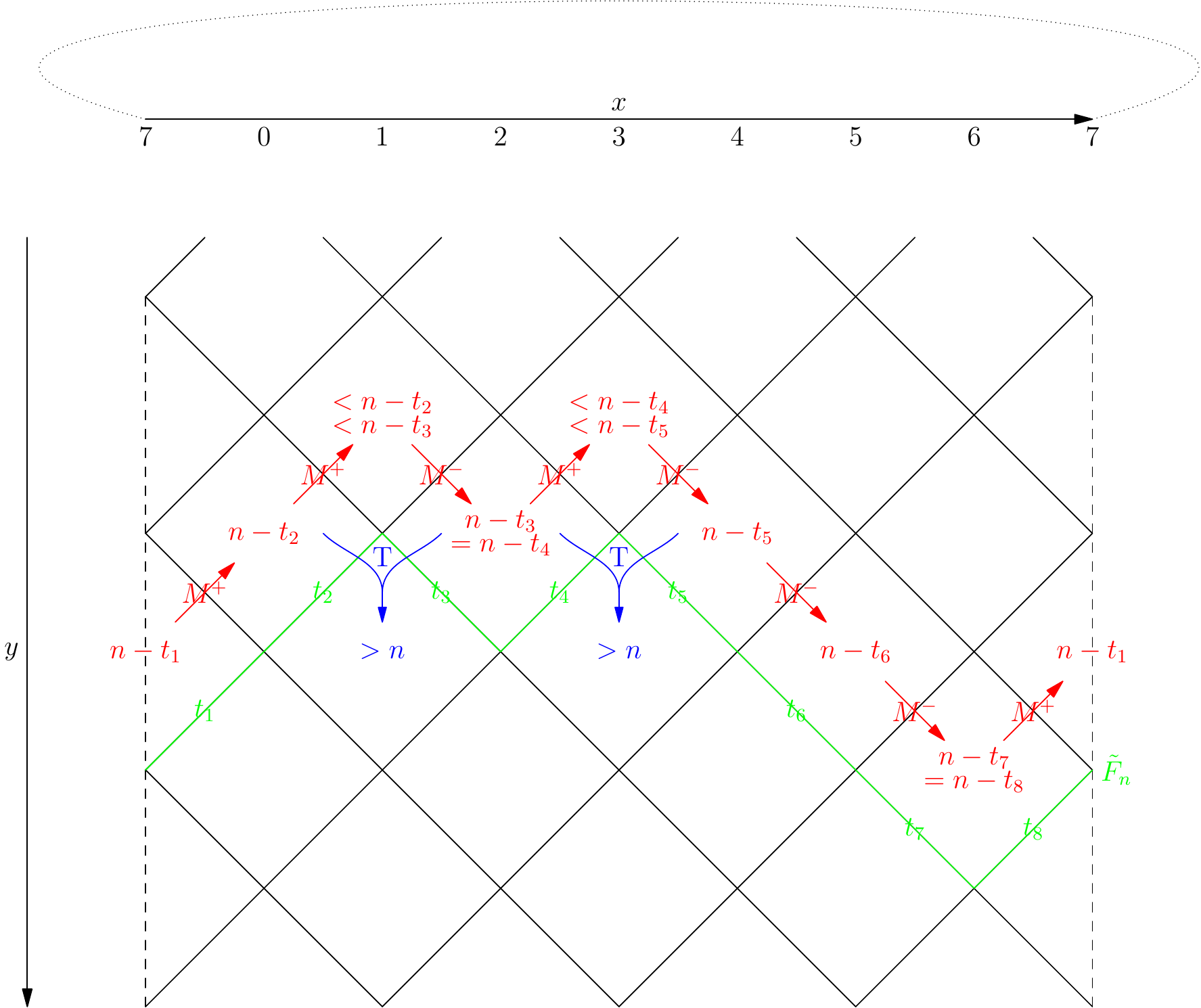}
  \caption{This figure illustrates Section~\ref{sec:PCAnu}. To get the green line, we need to get all the conditions in red (conditions 1, 2 and 3 of Section~\ref{sec:PCAnu}) and blue (condition 4 of Section~\ref{sec:PCAnu} for $i$ such that $(b_{i-1}=1,b_i=-1)$). Getting conditions in red is given by~\eqref{eq:red} and in blue by~\eqref{eq:blue} under the measure $H_{A,\theta_{(M^-,M^+)}}$.\newline Remember that, by the definition of $\tilde{\Bridges}_L$, $t_3=t_4$ and $t_7=t_8$.} \label{fig:explanation}
\end{figure}

\section{GLPP on cylinders: continuous time} \label{sec:cont}
This section is dedicated to GLPP in continuous time. We explain the main difference with the discrete time. In particular, we give some few sufficient conditions (not optimal in general) on the sequences $(\mu_\Delta)_{\Delta \in \RR_+}$ such that the GLPP is well defined and such that the front line $(\tilde{F}_n)_{n \in \RR_+}$ is a non-explosive Markov process. Then, we establish properties of this front line as we have done in the discrete time case.\par
We suppose that, for any $\Delta \in \RR_+$, $\mu_\Delta$ is absolutely continuous according to the Lebesgue measure on $\RR_+^*$. In particular, this implies that we do not consider measures with atoms. Moreover, for any $\Delta \in \RR_+$, the density of $\mu_\Delta$ on $\RR_+^*$ is denoted by $f_\Delta$ and we impose that, for any $\Delta \in \RR_+$, $x \in \RR_+^*$, $f_{\Delta}(x) > 0$ and that $f_{\Delta}$ is $\mathcal{C}_1$, that means it is differentiable and its derivative is continuous.\par 
The first point is that the definition of GLPP is strictly the same as the one in Section~\ref{sec:gLPPdef}, except that we consider $(\mu_{\Delta})_{\Delta \in \RR_+} \in \mprob{\RR_+^*}^{\RR_+}$ instead of $\mprob{\NN^*}^\NN$ and the time $n$ is no more discrete but continuous, so, now, $n \in \RR_+$. Just to be sure, we give quickly the new definition of $\tilde{\Bridges}_L$ in that context. For any $b \in \Bridges_L$, we define the set
\begin{align} 
  \Time_b = \{(t_i & : i \in \ZZL) \in (\RR_+)^{\ZZL} : \nonumber \\
                   & \text{if } b_i = b_{i+1} = 1, \text{then } t_i<t_{i+1}; \nonumber \\
                   & \text{if } b_i = b_{i+1} = -1, \text{then } t_i>t_{i+1}; \nonumber \\
                   & \text{if } b_i = -1 \text{ and } b_{i+1} = 1, \text{then } t_i=t_{i+1}\} \label{eq:timeset}.
\end{align}
Then, $\tilde{\Bridges}_L = \{(b,t) : b \in \Bridges_L, t\in \Time_b\}$.
\par

\begin{lemma}
  Let $(\mu_\Delta)_{\Delta \in \RR_+}$. The process $(\tilde{\Front}_n)_{n \in \RR_+}$ is a Markov process on $\tilde{\Bridges}_L$. 
\end{lemma}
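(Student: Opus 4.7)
My plan is to read off the dynamics of $(\tilde{F}_n)_{n \in \RR_+}$ as a jump process driven by independent ``clocks'' attached to the local maxima of $b$, and then use continuity of the $\mu_\Delta$ to exhibit the conditional distribution of the future given $\tilde{F}_n$. The Markov property will follow because this conditional distribution depends only on $(b,t)$.

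Concretely, suppose $\tilde{F}_n = (b,t)$ and let $J(b) = \{j \in \ZZL : b_j = 1,\ b_{j+1} = -1\}$ be the set of local-maximum positions. For each $j \in J(b)$, let $z_j^\star$ denote the cell of $\cyl{L}$ adjacent to both edges $j$ and $j+1$; this is the unique cell not yet in the growing set whose creation would affect edges $j$ and $j+1$. By the very definition of the GLPP, $\TL(z_j^\star) = n - \min(t_j,t_{j+1}) + \xi_{z_j^\star}^{(|t_j - t_{j+1}|)}$, and the event $\{\tilde{F}_n = (b,t)\}$ is, on the relevant $\sigma$-algebra generated by $(\TL(z))_{z : \TL(z) \leq n}$, exactly the event that for every $j \in J(b)$ we have $\xi_{z_j^\star}^{(|t_j - t_{j+1}|)} > \min(t_j,t_{j+1})$, while every other cell's time has been fixed.

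Since the family $\{\xi_{z_j^\star}^{(|t_j - t_{j+1}|)}\}_{j \in J(b)}$ is independent of the past and mutually independent, and since each $\mu_\Delta$ has a continuous, strictly positive density $f_\Delta$ on $\RR_+^*$, the conditional distribution of the residual waiting time $\rho_j := \xi_{z_j^\star}^{(|t_j - t_{j+1}|)} - \min(t_j,t_{j+1})$, given the past up to time $n$ and $\tilde{F}_n = (b,t)$, is absolutely continuous with density
\begin{equation}
  s \longmapsto \frac{f_{|t_j - t_{j+1}|}(\min(t_j,t_{j+1}) + s)}{\displaystyle \int_{\min(t_j,t_{j+1})}^{\infty} f_{|t_j - t_{j+1}|}(u)\,\di{u}}, \qquad s > 0,
\end{equation}
and the $\rho_j$'s are conditionally independent. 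In particular this joint conditional law is a function of $(b,t)$ only. The next jump of $\tilde{F}$ thus occurs at time $n + \min_{j \in J(b)} \rho_j$, at the position $j^\star = \mathrm{argmin}_j \rho_j$ (a.s.\ unique by absolute continuity), and the effect is the deterministic local update of $(b,t)$ that turns the local maximum at $j^\star$ into a local minimum, resets the two affected time coordinates to $0$, and possibly merges with neighbouring maxima; all of this is a function of $(b,t)$ and $j^\star$.

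Iterating this description, the law of $(\tilde{F}_{n+s})_{s \geq 0}$ conditional on $(\tilde{F}_u)_{u \leq n}$ depends on the past only through $\tilde{F}_n$, which is the Markov property. The main delicate point is to verify that, after each transition, the freshly exposed clocks attached to newly created local maxima are indeed independent of the past: this is immediate because those clocks are the $\xi$ variables at cells that had not been touched yet, and they were never conditioned on. Two subordinate issues worth addressing but straightforward are (i) almost-sure uniqueness of the argmin, which follows from the continuity of the densities, and (ii) measurability of the jump map on $\tilde{\Bridges}_L$, which is clear since $\tilde{\Bridges}_L$ is countable-fibred over the finite set $\Bridges_L$. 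Non-explosion of the jump process is not part of this lemma and will be addressed separately under additional conditions on $(\mu_\Delta)_{\Delta \in \RR_+}$.
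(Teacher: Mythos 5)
Your argument is correct and follows essentially the same route as the paper: you decompose the dynamics into a deterministic drift (all times $t_i$ increase at unit rate) plus a jump process localised at local maxima, and you show the jump rates depend only on $(b,t)$. The paper states these jump rates directly as $\beta_m^\delta = f_\delta(m)/\int_{\RR_+^*} f_\delta(s+m)\,\di{s}$, while you derive them from the residual-time densities and additionally spell out the independence and measurability points that the paper leaves implicit — a more careful exposition of the same idea.
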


\begin{proof}
  The dynamic of $(\tilde{\Front}_n)_{n \in \RR_+}$ has two components: one deterministic and continuous and one that is a jump process on local maximum. Hence,
  \begin{itemize}
  \item for any $j \in \cyl{L}$, $\di{b_{n,j}} = 0$ and $\di{t_{n,j}} = \di{n}$ and, 
  \item for $j \in \cyl{L}$ such that $b_{n,j} = 1$ and $b_{n,j+1} = -1$ (i.e. $b_n$ has a local maximum between $j$ and $j+1$), then $(b_{n,j},b_{n,j+1},t_{n,j},t_{n,j+1})$ jumps to  the value $(-1,1,0,0)$ at rate $\displaystyle \beta_{m}^{\delta} = \frac{f_\delta(m)}{\int_{\RR_+^*} f_\delta(s + m) \di s}$ where $m = \min \left( t_{n,j},t_{n,j+1} \right)$ and $\delta= \left| t_{n,j}-t_{n,j+1} \right|$. \qedhere
  \end{itemize}
\end{proof}

% \jerome{or 
% \begin{proof}
%   The dynamic of $(\tilde{\Front}_n)_{n \in \RR_+}$ from time $n$ to time $n + \di n$ is
%   \begin{enumerate}
%   \item $b_{n,j} = k = b_{n,j+k}$, then $b_{n+\di n,j} = k$ and $t_{n+\di n,j} = t_{n,j} + \di n$ ;
%   \item $b_{n,j} = k = - b_{n,j+k}$ (i.e. $b_n$ has a local maximum between $j$ and $j+k$), then 
%     \begin{enumerate}
%     \item $(b_{n+\di n,j},b_{n+\di n,j+k},t_{n+\di n,j},t_{n+\di n,j+k}) = (-k,k,0,0)$ with rate $\displaystyle \beta_{m}^{\delta} = \frac{f_\delta(m)}{\int_{\RR_+^*} f_\delta(s + m) \di s}$ where $m = \min \left( t_{n,j},t_{n,j+k} \right)$ and $\delta= \left| t_{n,j}-t_{n,j+k} \right|$;
%     \item $(b_{n+\di n,j},b_{n+\di n,j+k},t_{n+\di n,j},t_{n+\di n,j+k}) = (k,-k,t_{n,j}+\di n,t_{n,j+k}+\di n)$. \qedhere
%     \end{enumerate}
%   \end{enumerate}
% \end{proof}
% }

This dynamics corresponds to the following \emph{generator} $G$ on $\mathcal{C}^{1} \left(\tilde{\Bridges}_L \right)$ (the set of function of class $C^1$ from $\tilde{\Bridges}_L$ to $\RR$): for any $h \in \mathcal{C}^1 \left(\tilde{\Bridges}_L \right)$, any $(b,t) \in \tilde{\Bridges}_L$,
\begin{equation} \label{eq:generator}
(Gh)((b,t)) = \frac{\partial h}{\partial \vec{u}}((b,t)) + \sum_{i : b_i=1,b_{i+1}=-1} \frac{f_{\delta}(m)}{\displaystyle \int_{s \geq 0} f_{\delta}(m+s) \di s} \left[ h((b^{(i)},t^{(i)})) - h((b,t)) \right]
\end{equation}
where $\vec{u} = (0^{2L},1^{2L})$, $\delta = |t_{i+1}-t_i|$, $m = \min(t_i,t_{i+1})$, $b^{(i)} = (b_1,\dots,b_{i-1},-1,1,b_{i+2},\dots,b_{2L})$ and $t^{(i)} = (t_1,\dots,t_{i-1},0,0,t_{i+2},\dots,t_{2L})$.

The first difficulty that could not happen in discrete time is that we have to check that the process $(\tilde{\Front}_n)_{n \in \RR_+}$ does not explode. The explosion of the process is here an infinite number of jumps in a finite time that implies an infinite asymptotic mean speed $c_L = \infty$. The notion of explosion is different from the usual one on Markov processes (see~\cite{MT93-part3}). Indeed, the usual one is when the process goes to infinity in a finite time, whereas here this kind of explosion is not possible because times on edges grow at most linearly. And, reciprocally, when $(\tilde{\Front}_n)$ explodes in the GLPP context, it does not explode in the usual context because $(\tilde{\Front}_n)_{n \in \RR_+}$ is then in a compact set of the form $\{(b,t) : b \in \Bridges_L, \forall i, 0 \leq t_i \leq \epsilon\} \cap \tilde{\Bridges}_L$ for a small $\epsilon$.\par 

A sufficient condition to avoid the explosion is the following one:
\begin{cond} \label{cond:noexplosion}
  there exists $\alpha \in (0,1)$, $\epsilon > 0$ such that
  \begin{equation}
    \sup_{\Delta \in \RR_+} \{ \mu_{\Delta}([0,\epsilon]) \} \leq \alpha.
  \end{equation}
\end{cond}
  
\begin{lemma} \label{lem:noexplosion}
  Let $(\mu_\Delta)_{\Delta \in \RR_+} \in \mprob{\RR_+^*}^{\RR_+}$ be such that~\C~\ref{cond:noexplosion} holds. Then, $(\tilde{\Front}_n)_{n \in \RR_+}$ does not explode.
\end{lemma}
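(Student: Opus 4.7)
The plan is to reduce the non-explosion of $(\tilde{\Front}_n)_{n \in \RR_+}$ to the almost-sure finiteness, for every $N \in \RR_+$, of the cardinal $\card{(x,y) \in \cyl{L} : \TL((x,y)) \leq N}$. Indeed, each jump of $(\tilde{\Front}_n)$ in $[0,N]$ corresponds to one new cell entering the growing set and conversely, so the number of jumps in $[0,N]$ equals this cardinal, up to an additive constant coming from the initial row. Thus it suffices to prove that a.s.\ only finitely many cells of $\cyl{L}$ have $\TL \leq N$.

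For this I would use~\C~\ref{cond:noexplosion} through a lower bound of $\TL((x,y))$ along a deterministic directed path. Fix $(x,y) \in \cyl{L}$ with $y \geq 1$ and pick any path $\pi = (z_0, z_1, \ldots, z_y)$ from row $0$ up to $z_y = (x,y)$, each $z_j$ lying at height $j$. The recursive definition of the GLPP yields the deterministic bound $\TL((x,y)) \geq \sum_{j=1}^y \xi_{z_j}$. Let $\mathcal{F}_j$ be the $\sigma$-algebra generated by the $\xi$'s attached to cells of height at most $j$. Writing $z_j = (a_j, j)$, the index $\Delta_j := |\TL((a_j - 1, j-1)) - \TL((a_j + 1, j-1))|$ is $\mathcal{F}_{j-1}$-measurable and $\xi_{z_j}$ is conditionally $\mu_{\Delta_j}$-distributed given $\mathcal{F}_{j-1}$. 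By~\C~\ref{cond:noexplosion}, $\prob{\xi_{z_j} \leq \epsilon \mid \mathcal{F}_{j-1}} = \mu_{\Delta_j}([0,\epsilon]) \leq \alpha$ a.s. Hence the indicators $I_j := \ind{\xi_{z_j} > \epsilon}$ conditionally dominate $\mathrm{Bernoulli}(1-\alpha)$ variables, and a standard coupling provides, on an enlarged probability space, i.i.d.\ $\mathrm{Bernoulli}(1-\alpha)$ variables $B_1, \ldots, B_y$ with $I_j \geq B_j$ a.s.\ for every $j$.

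Combining these ingredients, $\TL((x,y)) \geq \epsilon \sum_{j=1}^y B_j$, so a Chernoff bound produces a constant $c = c(N, \epsilon, \alpha) > 0$ with $\prob{\TL((x,y)) \leq N} \leq \prob{\sum_{j=1}^y B_j \leq N/\epsilon} \leq e^{-cy}$ as soon as $y > 2N/(\epsilon(1-\alpha))$. Since each row of $\cyl{L}$ contains exactly $L$ cells, $\sum_{(x,y) \in \cyl{L}} \prob{\TL((x,y)) \leq N} < \infty$, and Borel-Cantelli gives a.s.\ finitely many cells with $\TL \leq N$, which is the desired finiteness. The delicate point is the conditional-coupling step: one must check that the path $\pi$ being deterministic (independent of the $\xi$'s) implies that $\Delta_j$ is truly $\mathcal{F}_{j-1}$-measurable and that $\xi_{z_j}$ is genuinely conditionally $\mu_{\Delta_j}$-distributed. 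This follows directly from the construction of the GLPP but deserves to be stated carefully; the rest of the argument is routine.
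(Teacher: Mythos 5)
Your proof is correct, and it takes a genuinely different route from the one in the paper. The paper also exploits~\C~\ref{cond:noexplosion} via a Bernoulli coupling --- it attaches independent Bernoulli$(1-\alpha)$ marks $w_z$ to the cells and argues (somewhat informally, with a picture) that in any time window of length $\epsilon$ the number of cells that can join the growing set is bounded in expectation by $2L(2L-3)+L\alpha^{-L}$, using a geometric count of ``blocking'' rows. Your approach instead fixes a horizon $N$, lower-bounds $\TL((x,y))$ along a single deterministic directed path by $\epsilon\sum_{j=1}^y B_j$ through a sequential conditional stochastic domination, and then concludes via Chernoff and Borel--Cantelli that a.s.\ only finitely many cells satisfy $\TL\le N$. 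This is cleaner and more quantitative: it yields an explicit exponential tail $\prob{\TL((x,y))\le N}\le e^{-cy}$ in the height $y$, and it gives almost-sure finiteness of the jump count on any $[0,N]$ rather than a bound in expectation per $\epsilon$-window. Two small points worth making explicit in a final write-up: first, that the number of jumps of $(\tilde{\Front}_n)$ on $[0,N]$ equals $\card{(x,y)\in\cyl{L}: y\ge 1,\ \TL((x,y))\le N}$, so that a.s.\ finiteness of the latter for every $N$ is exactly non-explosion; second, the ``standard coupling'' step should record the underlying lemma that if $\prob{I_j=1\mid\mathcal{F}_{j-1}}\ge 1-\alpha$ a.s.\ for every $j$, then $\sum_{j\le y} I_j$ stochastically dominates a Binomial$(y,1-\alpha)$ random variable --- which is the precise form one wants before invoking Chernoff. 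With those details spelled out, the argument stands on its own.
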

This following condition is probably very far from optimal, in particular, it is uniform on $\Delta$, whereas an optimal condition should consider dependence on it.\par
  
\begin{proof}
  The idea of the proof is to bound the mean number of squares that can arrive during $\epsilon$ units of time. To bound it, we use a coupling between our GLPP with parameter $\mu$ such that \C~\ref{cond:noexplosion} holds and the product measure (with Bernouilli's random variables of parameter $1-\alpha$) on sites. An illustration of what could happen on any time interval of size $\epsilon$ is given in Figure~\ref{fig:proofNoExplosion}.\par
  Let $(\mu_\Delta)_{\Delta \in \RR_+}$ be such that~\C~\ref{cond:noexplosion} holds. Now define, for each site $z$ in $\cyl{L}$, the random variable $w_z$ such that $w_z$ is a Bernoulli variable of parameter $1-\alpha$ (i.e.\ $\prob{w_z = 1} = 1- \alpha$) and $(w_z)_{z \in \cyl{L}}$ are independent. Now, by a coupling, it is easy to see that any square $z$ such that $w_z=1$ waits at least a time $\epsilon$ to come in our GLPP.\par
  Now, we choose one square $z_1$ such that $w_{z_1}=1$. We wait $\epsilon$ units of time before it comes. During this duration, there is at most $(L-1)^2$ squares that can arrive before it arrives: in Figure~\ref{fig:proofNoExplosion} it corresponds to the number of squares between the two red lines $R_1$ and $R_2$. We bound this number of squares by $(2L-3)L$ that is the number of squares between the two blue lines $B_1$ and $B_2$ in Figure~\ref{fig:proofNoExplosion}.\par 
  When the square $z_1=(x_1,y_1)$ has arrived, full lines of squares with $w_z=0$ (in green in Figure~\ref{fig:proofNoExplosion}) can arrive until we reach another square $z_2=(x_2,y_2)$ such that $w_{z_2}=1$ and $y_2-y_1>L$. But this number of lines is a geometric random variable, of success parameter $1-\alpha^L \neq 0$, whose mean is $\alpha^{-L} < \infty$. So we get in mean $\alpha^{-L}L$ squares between the two blue lines $B_2$ and $B_3$ in Figure~\ref{fig:proofNoExplosion}.\par
  Hence, during any interval of time of size $\epsilon$, there are less than the number of squares between the two red lines $R_1$ and $R_4$ in Figure~\ref{fig:proofNoExplosion} that can arrive, that is less than the number of squares between the two blue lines $B_1$ and $B_4$ whose mean number is $2L(2L-3) + L \alpha^{-L}$ finite. \qedhere
  
  \begin{figure}
    \begin{center}
      \includegraphics{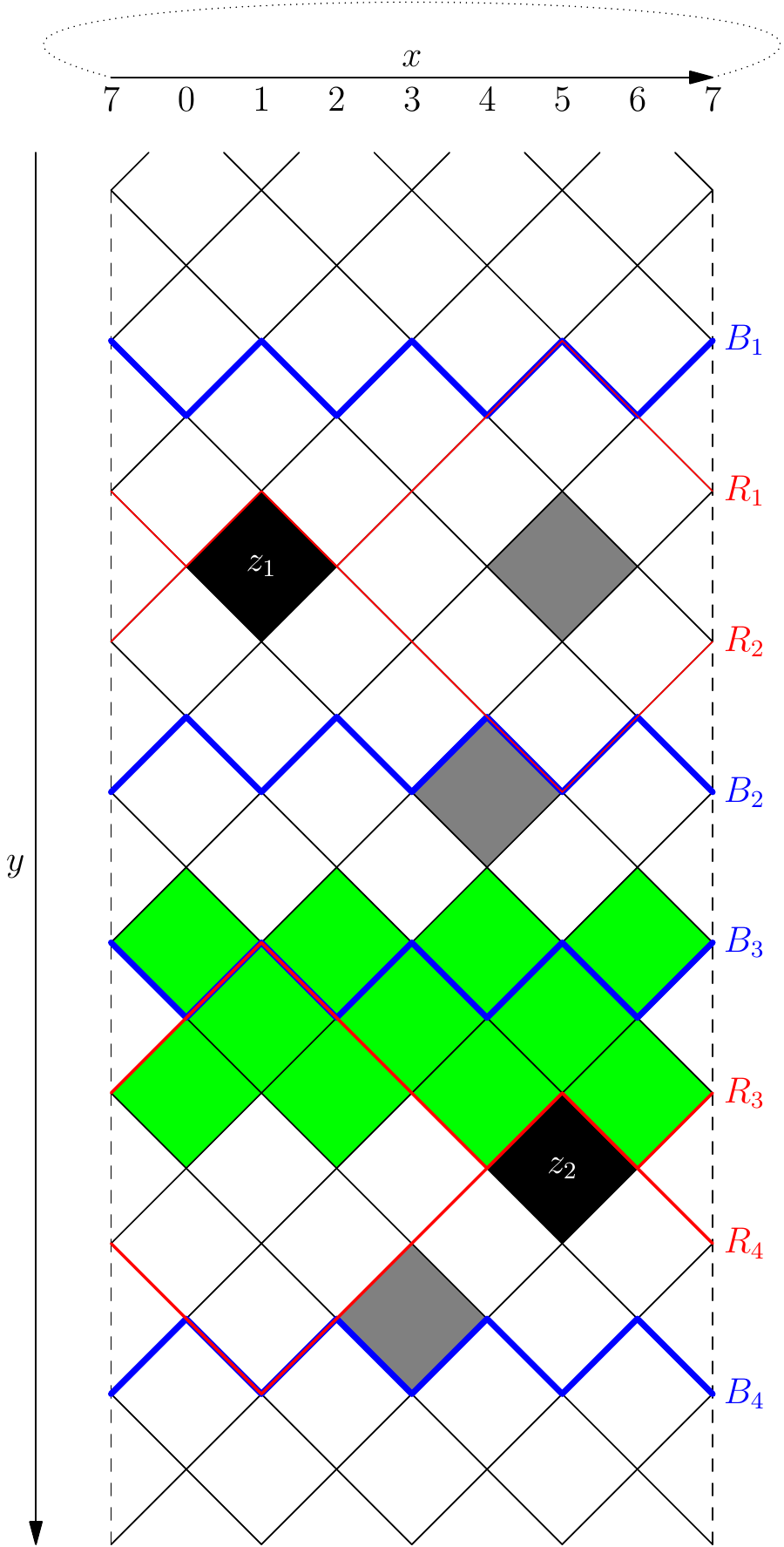}
    \end{center}
    \caption{Here $L=4$. White and green squares are squares such that $w_z=0$, grey squares such that $w_z=1$ but not use to bound, and black squares such that $w_z=1$ and use to find the boundary. A square could be both grey and green because, in that case, it should be black. The total maximum number of squares that can arrive during $\epsilon$ units of time is bounded by the number of squares between the two blue lines $B_1$ and $B_4$.} \label{fig:proofNoExplosion}
  \end{figure}
\end{proof}

After the non-explosion condition, we would like to generalise \C~\ref{cond:conv} to obtain ergodicity of $(\tilde{\Front}_n)_{n \in \RR_+}$. For that, we work with the densities. The equivalent for~\C~\ref{cond:conv} is
\begin{cond} \label{cond:conv-cont}
  there exists $\alpha> 0$ such that
  \begin{equation}
    \inf_{\Delta \in \RR_+, t \in \RR_+^*} \frac{f_\Delta(t)}{\mu_\Delta \left( [t,\infty) \right)} \geq \alpha.
  \end{equation}
\end{cond}
and then we obtain
\begin{proposition} \label{prop:HMC-cont}
  For any $\left( \mu_\Delta \right)_{\Delta \in \RR_+} \in \mprob{\RR_+^*}^{\RR_+}$ such that~\C~\ref{cond:noexplosion} and~\C~\ref{cond:conv-cont} hold, then $\tilde{\Front}_n$ is (exponentially) ergodic.
\end{proposition}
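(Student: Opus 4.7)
The approach is to extend the proof of Theorem~\ref{thm:HMC} to the continuous-time setting using the Meyn--Tweedie theory of Foster--Lyapunov drift conditions. By Lemma~\ref{lem:noexplosion}, Condition~\ref{cond:noexplosion} ensures that $(\tilde{\Front}_n)_{n \in \RR_+}$ does not explode and is therefore a bona fide c\`adl\`ag Markov process on $\tilde{\Bridges}_L$ whose generator $G$ is given by~\eqref{eq:generator}. The strategy is to combine a geometric drift inequality with petiteness of sublevel sets, then invoke~\cite[Theorem~6.1]{MT93-part3}.

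For the drift inequality I would use the same Lyapunov function as in the discrete case,
$V((b,t)) = 1 + \sum_{i \in \ZZL} t_i$. Applying $G$ to $V$ gives
\begin{equation*}
GV((b,t)) = 2L - \sum_{i\, :\, b_i=1,\, b_{i+1}=-1} \beta_{m_i}^{\delta_i}\, (t_i + t_{i+1}),
\end{equation*}
since each $t_i$ drifts upward at unit rate and each local-maximum jump resets the pair $(t_i,t_{i+1})$ to $(0,0)$. Condition~\ref{cond:conv-cont} gives $\beta_{m}^{\delta} \geq \alpha$ uniformly. Every bridge in $\Bridges_L$ has at least one local maximum of $b$, and the monotonicity rules defining $\Time_b$ force the argmax of $(t_i)_{i \in \ZZL}$ to lie at an index adjacent to such a local maximum; hence at least one term in the sum satisfies $t_i + t_{i+1} \geq \max_i t_i \geq (V-1)/(2L)$. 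This yields
\begin{equation*}
GV \leq 2L - \frac{\alpha}{2L}\,(V - 1) \leq -c V + d,
\end{equation*}
with $c = \alpha/(2L)$ and a suitable constant $d > 0$, which is the geometric drift condition.

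Next I would verify that the sublevel sets $C_R = \{V \leq R\}$ are petite. Irreducibility is proved exactly as in the discrete case: from any state one can flip local maxima successively to reach a neighborhood of $((-1)^i)_i$ with $t \equiv 0$ in finite time, and from there, by waiting and flipping, approach any target element of $\tilde{\Bridges}_L$. Because each $\mu_\Delta$ is absolutely continuous with $\mathcal{C}^1$ positive density $f_\Delta$, the jump-time laws are absolutely continuous on $\RR_+$, and the time-$s$ kernel of the process from any point in $C_R$ dominates a non-trivial absolutely continuous measure on an open subset of $\tilde{\Bridges}_L$, giving petiteness of $C_R$. Combining the drift $GV \leq -cV + d$ with this petiteness yields exponential ergodicity of $(\tilde{\Front}_n)_{n \in \RR_+}$ in $V$-norm.

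The step I expect to be the main obstacle is the petiteness verification. The state space is hybrid (discrete $b$-coordinate, continuous $t$-coordinate), so one must carefully track how the positivity and $\mathcal{C}^1$-regularity of the $f_\Delta$'s propagate through a finite sequence of jumps to produce a genuine absolutely continuous minorization on an open set — rather than a measure concentrated on a lower-dimensional subset. By contrast, the drift computation is a direct transcription of the discrete argument and should proceed without difficulty; assembling both ingredients inside the Meyn--Tweedie framework then delivers the announced exponential rate.
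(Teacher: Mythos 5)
Your proposal follows essentially the same route as the paper: same Lyapunov function $V((b,t)) = 1 + \sum_i t_i$, same drift computation giving $GV \leq -\frac{\alpha}{2L}V + d$ via Condition~\ref{cond:conv-cont} and the observation that the argmax of $(t_i)$ is adjacent to a local maximum of $b$, same verification of $\Psi$-irreducibility/petiteness of sublevel sets through the positivity and regularity of the densities $f_\Delta$, and the same Meyn--Tweedie-type geometric drift theorem to conclude (the paper invokes Theorem~5.2 of~\cite{DMT95} rather than~\cite{MT93-part3}, but these belong to the same body of results). Your proposal is, if anything, slightly more explicit than the paper about the role of Condition~\ref{cond:noexplosion} and Lemma~\ref{lem:noexplosion} in guaranteeing that the process is a well-defined c\`adl\`ag Markov process, and your diagnosis that the petiteness verification is the delicate point matches the paper's own caveat that it does not give all the formal details of that step.
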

  
\begin{proof}
  Ergodicity of Markov processes in continuous space and time is much more technical that in discrete space and time. Very good references on it are the series of articles by Meyn and Tweedie~\cite{MT93-part1,MT93-part2,MT93-part3} and references therein. Here, we use Theorem~5.2 in~\cite{DMT95} that completes this series of articles. Moreover, here, the state-space considered is $\tilde{\Bridges}_L$ that adds some complexities because of its structure. Hence, we do not give in the following all the formal details of the proof, but the main ideas that permit to understand and check that it is correct.\par

  We begin here by the construction of a measure $\Psi$ on $\tilde{\Bridges}_L$.
  % , that could be seen as a ``Lebesgue'' measure on $\tilde{\Bridges}_L$.
  First, we need to define, for any $b \in \Bridges_L$, a measure $\Psi_b$ on $\Time_b$. Let $k_b = \card{i: b_i = -1,b_{i+1}=1}$ that is the number of local minima in $b$. We recall that, if $b_i=-1$ and $b_i=1$, then $t_i=t_{i+1}$. Then, we define $\Psi_b$ as a measure on $\Time_b$ that has a density $\psi_b$ according to the Lebesgue measure on $\RR^{2L-k_b}$:
  \begin{equation} \label{eq:HaarMeasure}
    \di \Psi_b(t) = \ind{t \in \Time_b} \prod_{i: b_{i-1} \neq -1 \text{ or } b_i \neq 1} \di t_i.
  \end{equation}
  Now, the measure $\Psi$ on $\tilde{\Bridges}_L$ is defined by
  \begin{equation} \label{eq:psi}
    \Psi = \sum_{b \in \Bridges_L} \delta_b \otimes \Psi_b  
  \end{equation}
  where $\delta_b$ is the Dirac measure on the finite space $\Bridges_L$ and $\otimes$ denotes the product measure.\par
  
  \begin{lemma}
    Let $(\mu_\Delta)_{\Delta \in \RR_+} \in \mprob{\RR_+^*}^{\RR_+}$ be such that their densities $f_{\Delta}$ satisfy, for any $\Delta \in \RR_+$, $x \in \RR_+^*$, $f_\Delta(x) > 0$. The Markov process $(\tilde{\Front}_n)_{n\in \RR_+}$ is $\Psi$-irreducible and aperiodic. 
  \end{lemma}
  
  \begin{proof}
    The definitions of $\Psi$-irreducibility and aperiodicity we use here are the ones given in Section~3 of~\cite{DMT95}.
      
    The $\Psi$-irreducibity is: for any $A \in \borel{\tilde{\Bridges}_L}$, if $\Psi(A) > 0$, then for any $(b,t) \in \tilde{\Bridges}_L$, there exists $t \in \RR_+$ such that $P^t(x,A) > 0$. The idea to prove it, it is just to say that from any configuration $(b,t) \in \Bridges_L$, we could go to the compact set $C_1 = \{ (b,t) : \forall i\ b_{2i}= 1, b_{2i+1} = -1, t_{2i} = t_{2i+1} \leq 1 \}   \cap \tilde{\Bridges}_L$ with a positive probability (for example, if many well chosen squares come during a short period of time), and from the set $C_1$ to any other compact set of $\tilde{\Bridges}_L$ with a positive probability (by choosing well when new squares come). This is possible because we have imposed that $f_{\Delta}(x) > 0$ for any $\Delta \in \RR_+$, $x \in \RR_+^*$.\par
    
    % The fact that $(\tilde{\Front}_n)_{n \in \RR_+}$ is aperiodic is even worst to prove formally that the $\Psi$-irreducibility. In words, the process is aperiodic because jumps can occurs at any time and, jumps reset value of $(t_i,t_{i+1})$ to $(0,0)$ that does not depend on $t$.
    To prove the aperiodicity of the $\Psi$-irreducible $(\tilde{\Front}_n)_{n \in \RR_+}$, we define, for any $\epsilon > 0$, $C_\epsilon = \{ (b,t) : \forall i\ b_{2i}= 1, b_{2i+1} = -1, t_{2i} = t_{2i+1} \leq \epsilon \}   \cap \tilde{\Bridges}_L$. For any $\epsilon > 0$, $C_\epsilon$ is a small set and, for any $x \in C_\epsilon$, for any $t \in \RR_+$, $P^t(x,C_\epsilon) > 0$ (because $f_{\Delta}(x) > 0$ for any $\Delta \in \RR_+$, $x \in \RR_+^*$). 
  \end{proof}

  Now, to conclude we prove the condition~$(\tilde{\mathcal{D}})$ of Theorem~5.2 in~\cite{DMT95} that is an analogue of the Foster criterion in continuous space and time. We choose the same Lyapunov function $V$ as in discrete time (but we need to add a $1$) and we use the generator given in~\eqref{eq:generator}:
  \begin{equation}
    V((b,t)) = 1 + \sum_{i=1}^{2L} t_i.
  \end{equation}
  Then, for any $(b,t) \in \tilde{\Bridges}_L$, for any $n \in \RR_+$,
  % \begin{align*}
  %   & \mathbb{E}_{\tilde{\Front}_n = (b,t)}\left[V(\tilde{\Front}_{n+\di n})\right] - V((b,t)) \\
  %   \leq & V((b,(t_1+\di n,\dots,t_{2L}+\di n))) \prod_{i:b_i=1,b_{i+1}=-1} \left(1 - \frac{f_{|t_{i+1}-t_i|}(\min(t_i,t_{i+1})) \di n}{\int_{(0,\infty]} f_{|t_{i+1}-t_i|}(s+\min(t_i,t_{i+1})) \di s} +o(\di n) \right) \\
  %   & + \sum_{i:b_i=1,b_{i+1} =-1} V((b,(t_1+\di n,\dots,t_{i-1}+\di n,\di n,\di n,t_{i+2}+\di n,\dots,t_{2L}+\di n))) \\
  %   & \qquad \left(\frac{f_{|t_{i+1}-t_i|}(\min(t_i,t_{i+1})) \di n}{\int_{(0,\infty]} f_{|t_{i+1}-t_i|}(s+\min(t_i,t_{i+1})) \di s} + o(\di n) \right) - \left( 1 + \sum_{i=1}^{2L} t_i \right) \\
  %   = & \left( 2L\, \di n + 1 + \sum_{i=1}^{2L} t_i \right) \left(1 - \sum_{i: b_i=1,b_{i+1}=-1} \frac{f_{|t_{i+1}-t_i|}(\min(t_i,t_{i+1})) \di n}{\int_{(0,\infty]} f_{|t_{i+1}-t_i|}(s+\min(t_i,t_{i+1})) \di s} +o(\di n) \right) \\
  %   & + \sum_{i:b_i=1,b_{i+1} =-1} \left( 2L\, \di n + 1 + \sum_{j=1}^{2L} t_j - (t_i+t_{i+1}) \right) \left(\frac{f_{|t_{i+1}-t_i|}(\min(t_i,t_{i+1})) \di n}{\int_{(0,\infty]} f_{|t_{i+1}-t_i|}(s+\min(t_i,t_{i+1})) \di s} + o(\di n) \right) \\
  %   & \quad - \left( 1 + \sum_{i=1}^{2L} t_i \right) \\
  %   \leq & 2L \, \di n - \left( \sum_{i:b_{i}=1,b_{i+1}=-1} (t_i+t_{i+1}) \frac{f_{|t_{i+1}-t_i|}(\min(t_i,t_{i+1}))}{\int_{(0,\infty]} f_{|t_{i+1}-t_i|}(s+\min(t_i,t_{i+1})) \di s} \right) \di n + o(\di n)\\
  %   \leq & 2L \, \di n - \alpha \left( \sum_{i:b_{i}=1,b_{i+1}=-1} (t_i+t_{i+1}) \right) \di n + o(\di n).
  % \end{align*}
  \begin{align*}
    (GV)((b,t)) & = \frac{\partial V}{\partial \vec{u}} ((b,t)) + \sum_{i : b_i = 1 , b_{i+1} = -1} \frac{f_{|t_i-t_{i+1}|}(\min(t_i,t_{i+1}))}{\int_{s \geq 0} f_{|t_i-t_{i+1}|}(\min(t_i,t_{i+1}+s)) \di s} \left[ V((b^{(i)},t^{(i)})) - V((b,t)) \right] \\
    & = 2L - \sum_{i : b_i = 1, b_{i+1}=-1} \frac{f_{|t_i-t_{i+1}|}(\min(t_i,t_{i+1}))}{\int_{s \geq 0} f_{|t_i-t_{i+1}|}(\min(t_i,t_{i+1}+s)) \di s} (t_i+t_{i+1}) \\
    & \leq 2L - \alpha \sum_{i : b_i = 1, b_{i+1}=-1} (t_i+t_{i+1}) \\
  \end{align*}
  The last line is obtained because we suppose~\C~\ref{cond:conv-cont}. Now, if we set $T = \max\{t_1,\dots,t_{2L}\}$, we know that $T$ corresponds to a $t_i$ or $t_{i+1}$ where $b_i=1,b_{i+1}=-1$ and that $\displaystyle T \geq \frac{\sum_{i=1}^{2L} t_i}{2L}$. Hence
  \begin{align*}
    (GV)((b,t)) \leq & 2L - \alpha \frac{\sum_{i=1}^{2L} t_i}{2L} \\
    \leq & - \frac{\alpha}{2L} V((b,t)) + \left(2L + \frac{\alpha}{2L} \right)
  \end{align*}
  In addition, we remark that, for any $T \in \RR_+^*$, $\{(b,t) \in \tilde{\Bridges}_L: V((b,t)) \leq T\}$ are petite sets. Hence, that permits to prove condition~$(\tilde{\mathcal{D}})$ in~\cite{DMT95}. Now, we can apply Theorem~5.2 of~\cite{DMT95} that gives us that $(\tilde{\Front}_n)_{n \in \RR_+}$ is ergodic and even exponentially ergodic. 
\end{proof}

In the following, we denote by $\tilde{\nu}_L$ the invariant measure of $(\tilde{\Front}_n)_{n \in \RR_+}$. 

Now, we could express the asymptotic mean speed.
\begin{proposition}
  Let $\mu = (\mu_\Delta)_{\Delta \in \RR_+} \in \mprob{\RR_+^*}^{\RR_+}$ be such that~\C~\ref{cond:conv} holds. We denote by $\tilde{\nu}_L$ the invariant measure of the Markov chain $(\tilde{\Front}_n)_{n \in \RR_+}$. Let $(B,(T_1,\dots,T_{2L})) \sim \tilde{\nu}_L$. The asymptotic mean speed $\meanspeed_L$ of the front line of the LPP with parameter $\mu$ on the cylinder of size $L$ is
  \begin{equation}
    \meanspeed_L = \frac{1}{2 \esp{T_1}} = \frac{1}{\displaystyle 2 \int_{\tilde{\Bridges}_L} t_1 \di \tilde{\nu}_L((b,t))}
  \end{equation}
\end{proposition}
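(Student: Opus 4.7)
The plan is to adapt the discrete-time argument of Section~\ref{sec:proofmeanspeed} to the continuous setting, the only substantive change being that the sojourn times $\zeta_j$ now take values in $\RR_+$ rather than in $\NN^*$, which removes the additive constant $1$ appearing in the discrete formula.

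First I would fix the column $x \in \{0,1\}$ (which is legitimate by horizontal translation invariance) and consider the projection $\sigma : \tilde{\Bridges}_L \to \RR_+$, $\sigma((b,t)) = t_1$. Starting the chain $(\tilde{\Front}_n)_{n \in \RR_+}$ under its stationary law $\tilde{\nu}_L$ (which exists and is unique by Proposition~\ref{prop:HMC-cont}), I would study the real-valued process $X_n = t_{n,1} = \sigma(\tilde{\Front}_n)$. By inspection of the dynamics (deterministic linear growth at rate $1$ interrupted by jumps to $0$ when the adjacent local maximum becomes a minimum), $X$ grows at unit speed and is reset to $0$ at the successive instants $n_0 < n_1 < n_2 < \ldots$ at which the edge between $(0,y)$ and $(1,y+1)$ (or similar) leaves the front line. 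The sojourn times $\zeta_j = n_{j+1}-n_j$ coincide with the edge-times $\zeta\bigl(e_j\bigr)$ defined in~\eqref{eq:edgetime}. By the (exponential) ergodicity given by Proposition~\ref{prop:HMC-cont}, the ergodic theorem yields
\begin{equation*}
  \meanspeed_L \;=\; \lim_{y\to\infty} \frac{y}{\esp{\TL((x,y))}} \;=\; \frac{1}{\esp{\zeta_0}},
\end{equation*}
where $\zeta_0$ is distributed according to the Palm/stationary law of the sojourns.

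Next, I would establish the continuous analogue of the uniform-conditional identity used in the discrete proof: conditional on $\zeta_0 = s$, the ``age'' $X_0 = t_{0,1}$ is uniform on $[0,s]$. Here the discrete step of shifting by $1$ is replaced by an infinitesimal shift $\epsilon > 0$. Stationarity of $(X_n)$ gives, for any Borel $A \subset [0,s)$ with $A+\epsilon \subset [0,s)$,
\begin{equation*}
  \prob{X_0 \in A,\; \zeta_0 \in \di s} \;=\; \prob{X_\epsilon \in A+\epsilon,\; \zeta_0 \in \di s} \;=\; \prob{X_0 \in A+\epsilon,\; \zeta_0 \in \di s},
\end{equation*}
the first equality coming from the fact that on the event $\{\zeta_0 > \epsilon\}$ the dynamics is simply $X_\epsilon = X_0 + \epsilon$, and the second from stationarity in $n$. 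Passing to sets of small size and using translation invariance of Lebesgue measure shows that the conditional law of $X_0$ given $\zeta_0 = s$ has constant density on $[0,s]$, hence is the uniform law. Therefore $\esp{X_0 \mid \zeta_0} = \zeta_0/2$, so $\esp{T_1} = \esp{X_0} = \esp{\zeta_0}/2$, and substituting into $\meanspeed_L = 1/\esp{\zeta_0}$ yields exactly
\begin{equation*}
  \meanspeed_L \;=\; \frac{1}{2\esp{T_1}} \;=\; \frac{1}{\displaystyle 2 \int_{\tilde{\Bridges}_L} t_1 \,\di \tilde{\nu}_L((b,t))}.
\end{equation*}

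The main technical subtlety is justifying the continuous-time uniform-conditional claim rigorously: in the discrete proof this was an elementary bijection between atoms, but in continuous time one must argue at the level of densities and rule out accumulations of jumps, which is exactly what Lemma~\ref{lem:noexplosion} (non-explosion under~\C~\ref{cond:noexplosion}) provides. Everything else is a straightforward continuous-time transcription of the discrete argument, with the disappearance of the ``$+1$'' stemming from the absence of a minimal waiting time unit.
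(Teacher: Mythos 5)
Your proof is correct and follows essentially the same route as the paper's: the paper simply transcribes the discrete-time argument of Section~\ref{sec:proofmeanspeed} to continuous time, noting that the conditional age $t_{0,1}$ given $\zeta_0$ is now uniform on $[0,\zeta_0]$ rather than on $\{0,\dots,\zeta_0-1\}$, which removes the additive $+1$. You spell out the infinitesimal-shift justification of the uniform-conditional law that the paper leaves implicit, but the structure and conclusion are identical.
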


\begin{proof}
  The proof is the same as the one in discrete time (see~Section~\ref{sec:proofmeanspeed}) but in continuous time. The main difference is that now $(t_{n,1})_{n \in \RR_+}$ varies from $0$ to $\edgetime_j$ (and not $\edgetime_j -1$ as in discrete time). Hence, by the notations of~Section~\ref{sec:proofmeanspeed},
  \begin{equation}
    \esp{t_{0,1}|\edgetime_{0}} = \edgetime_0/2.
  \end{equation}
  That's why $\esp{t_{0,1}} = \esp{\edgetime_0}/2$.
\end{proof}

The integrable case is similar to the one in discrete time. The integrability condition is:
\begin{cond} \label{cond:int-cont}
  For any $\Delta \in \RR_+$, for any $i \in \RR_+^*$,
  \begin{equation}
    f_{\Delta}(i) = \frac{\sqrt{f_{0}(i) f_{0}(i+\Delta)}}{\displaystyle \int_{\RR_+^*}\sqrt{f_0(s) f_0(s+\Delta)} \di s}.
  \end{equation}
\end{cond}

\begin{proposition} \label{prop:int-c}
  Let $\left( \mu_{\Delta}\right)_{\Delta \in \RR_+} \in \mprob{\RR_+^*}^{\RR_+}$ be such that \C~\ref{cond:conv-cont} and~\C~\ref{cond:int-cont} hold, then $\tilde{\nu}_L$ is explicit and its density according to $\Psi$ (defined in~\eqref{eq:psi}) is
  \begin{equation}
    g((b,t)) = \frac{1}{Z} \prod_{i: b_i =b_{i+1}} \sqrt{f_0(|t_{i+1}-t_i|)} \prod_{i: b_i=1,b_{i+1} = -1} \left( \int_{\RR_+^*} \sqrt{f_0(s+t_i)} \sqrt{f_0(s+t_{i+1})} \di s \right) 
  \end{equation}
  where
  \begin{equation}
    Z = \sum_{b \in \Bridges_L} \int_{t \in \Time_b}  \prod_{i: b_i =b_{i+1}} \sqrt{f_0(|t_{i+1}-t_i|)} \prod_{i: b_i=1,b_{i+1} = -1} \left( \int_{\RR_+^*} \sqrt{f_0(s+t_i)} \sqrt{f_0(s+t_{i+1})} \di s \right) \di{t}.
  \end{equation}
\end{proposition}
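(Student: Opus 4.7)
The plan is to adapt the direct verification strategy of Section~\ref{sec:proofint} to the continuous-time setting, checking that the candidate density $g$ makes $g\,d\Psi$ stationary for the generator $G$ of~\eqref{eq:generator}. Since Proposition~\ref{prop:HMC-cont} (applied under the assumptions of the statement together with \C~\ref{cond:noexplosion}, which we include as a standing hypothesis) guarantees uniqueness of the invariant measure, it then suffices to prove that
\begin{equation*}
\int_{\tilde{\Bridges}_L} (Gh)(b,t)\, g(b,t)\, d\Psi(b,t) = 0
\end{equation*}
for every compactly supported test function $h \in \mathcal{C}^1(\tilde{\Bridges}_L)$.

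I would split this integral according to the two parts of $G$. Fixing $b \in \Bridges_L$, the drift term $\int (\partial h/\partial \vec u)\, g\,d\Psi_b$ is handled by integration by parts along $\vec u$ on $\Time_b$; this produces a bulk term $-\int h\,(\partial g/\partial \vec u)\,d\Psi_b$ together with boundary contributions on the faces of $\Time_b$ where a pair $(t_i,t_{i+1})$ with $b_i=-1,b_{i+1}=1$ vanishes. The jump part pairs the outflow $\beta^{\delta_i}_{m_i}\, g(b,t)\, h(b,t)$ from each local maximum of $(b,t)$ with the inflow, integrated over the pre-image, which populates the partner bridge $(b^{(i)}, t^{(i)})$ precisely on the face $t^{(i)}_i = t^{(i)}_{i+1} = 0$. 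The stationarity condition thus splits into an interior identity (the bulk derivative of $g$ compensates the total jump rate) and a family of boundary identities, one per face, each matching a drift boundary with the inflow from a partner bridge.

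Both identities are direct continuous analogs of the computations carried out in Section~\ref{sec:proofint}; in particular, the boundary identity is the counterpart of the chain of equalities~\eqref{eq:deb}--\eqref{eq:fin}, with sums over $\NN^*$ replaced by integrals over $\RR_+^*$ and $\mu_0$ replaced by its density $f_0$. The telescopic cancellation of denominators brought by \C~\ref{cond:int-cont} works identically, and the interior identity follows by logarithmic differentiation of the product formula for $g$, each factor contributing its share of $\beta^{\delta_i}_{m_i}$ via the same integrability relation. Once both identities are in place, standard density arguments (dense in $\mathcal{C}^1_c$) conclude stationarity.

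The main obstacle will be the careful handling of the stratified structure of the support of $\Psi$: the local-minimum constraints $t_i = t_{i+1}$ reduce the effective dimension of $\Time_b$, and the jump inflow is supported on strata of codimension two where both $t_i$ and $t_{i+1}$ vanish simultaneously. One must therefore correctly identify the Lebesgue coordinates on each face before carrying out the integration by parts, and verify that the $\Psi_b$-boundary produced in that way matches exactly the $\Psi_{b^{(i)}}$-support of the image of jumps firing in the partner bridge. Thanks to the absolute continuity and strict positivity of $f_\Delta$ assumed in the preamble of Section~\ref{sec:cont}, the simultaneous creation of several local minima happens at zero infinitesimal rate, so each flip is treated independently at the generator level; this is precisely why the continuous proof avoids the nine-case dichotomy used in Section~\ref{sec:proofint} and reduces to the single universal identity described above.
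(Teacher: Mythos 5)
Your proposal matches the paper's proof both in strategy and in detail: you verify $\int_{\tilde{\Bridges}_L} (Gh)\,g\,\di{\Psi} = 0$ for compactly supported $h$ by integrating by parts along $\vec u$ on each stratum $\Time_b^{(i)}$, then matching the bulk term against the jump outflow via the logarithmic-derivative identity for $F_{(b,t)}$, and matching the boundary term at $t'=0$ against the jump inflow from the partner bridges $b^{(i)}$ via the change of variables $b\to b^{(j)}$, $t\to t^{(j)}$. The only small overstatement is the claim of a ``single universal identity'' replacing the nine-case dichotomy: the paper's boundary bookkeeping still needs a four-case split according to the signs of $b_{j-1}$ and $b_{j+2}$ around the flipped extremum (reduced from nine, as you correctly attribute to the zero rate of simultaneous flips), so the boundary identity you describe still carries a case analysis in its execution.
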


\paragraph{Before the proof}\par
Before to make the proof, we need to fix some notations. First, for any $(b,t) \in \tilde{\Bridges}_L$ and $i \in \tore{2L}$ such that $b_i = 1,b_{i+1}=-1$, we let
\begin{equation}
  b^{(i)} = (b_1,\dots,b_{i-1},-1,1,b_{i+2},\dots,b_{2L}) \text{ and } t^{(i)} = (t_1,\dots,t_{i-1},0,0,t_{i+2},\dots,t_{2L}).
\end{equation}
Moreover, for any $b \in \Bridges_L$, for any $i \in \tore{2L}$ such that $b_i= -1, b_{i+1}=1$, we let
\begin{equation}
  \Time^{(i)}_b = \{t \in \Time_b : t_{i}=t_{i+1}= 0\}.
\end{equation}
As for $\Time_b$ in~\eqref{eq:HaarMeasure}, we define a measure $\Psi^{(i)}_b$ on $\Time^{(i)}_b$ by
\begin{equation}
  \di{\Psi^{(i)}_b}(t) = \ind{t \in \Time^{(i)}_b} \prod_{j: b_{j-1} \neq -1 \text{ or } b_i \neq 1 \text{ or } j \neq i} \di{t_j}.
\end{equation}
We can remark that
\begin{equation}
  \di{\Psi_b}((t_1,\dots,t_{2L})) = \sum_{i:b_{i} = -1,b_{i+1}=1} \di{\Psi^{(i)}_b}((t_1-t_i,\dots,t_{2L}-t_i)) \di{t_i}
\end{equation}
because, for any $b \in \Bridges_L$, $i, j \in \tore{2L}$ such that $b_i=b_j=-1, b_{i+1}=b_{j+1}=1$ and $i \neq j$, $\Time_b^{(i)} \cap \Time_b^{(j)}$ are null sets for the measure $\Psi_b$.
Finally, for any $(b,t) \in \tilde{\Bridges}_L$ and any $t' \in \RR_+$, we let 
\begin{align}
  F_{(b,t)}(t') & = \prod_{j: b_j =b_{j+1}} \sqrt{f_0(|t_{j+1}-t_j|)} \prod_{j: b_j=1,b_{j+1} = -1} \left( \int_{\RR_+^*} \sqrt{f_0(s+t_j+t')} \sqrt{f_0(s+t_{j+1}+ t')} \di s \right)\\
                & = \prod_{j: b_j =b_{j+1}} \sqrt{f_0(|t_{j+1}-t_j|)} \prod_{j: b_j=1,b_{j+1} = -1} \left( \int_{(t',\infty)} \sqrt{f_0(s+t_j)} \sqrt{f_0(s+t_{j+1})} \di s \right)\\
                & = Z\ g(b,(t_1+t',\dots,t_{2L}+t'))
\end{align}
and its derivative is 
\begin{align}
  \frac{\di{F_{(b,t)}}}{\di{t'}}(t') =  & - \sum_{i : b_i = 1 , b_{i+1} = -1} \sqrt{f_0(t'+t_i)} \sqrt{f_0(t'+t_{i+1})} \prod_{j: b_j =b_{j+1}} \sqrt{f_0(|t_{j+1}-t_j|)} \nonumber \\
                                        & \qquad \prod_{j: b_j=1, b_{j+1} = -1, j \neq i} \int_{s \in (t',\infty)} \sqrt{f_0(s+t_j)} \sqrt{f_0(s+t_{j+1})} \di{s}\\
  = &  - \sum_{i : b_i = 1 , b_{i+1} = -1} \frac{\sqrt{f_0(t'+t_i)} \sqrt{f_0(t'+t_{i+1})}}{\int_{s \in (t',\infty)} \sqrt{f_0(s+t_i)} \sqrt{f_0(s+t_{i+1})}} F_{(b,t)}(t')\\
  = & -  \sum_{i : b_i = 1 , b_{i+1} = -1} f_{|t_i-t_{i+1}|}(\min(t_i,t_{i+1}) + t')\ F_{(b,t)}(t').
  % = & -\sum_{i:b_i=1,b_{i+1}=-1} f_{|t_i-t_{i+1}|}(\min(t_i,t_{i+1})+t')  \prod_{j: b_j =b_{j+1}} \sqrt{f_0(|t_{j+1}-t_j|)} \nonumber \\
  %                                       & \qquad \prod_{j: b_j=1, b_{j+1} = -1} \int_{s \in (0,\infty)} \sqrt{f_0(s+t_i+t')} \sqrt{f_0(s+t_{i+1}+t')} \di{s}
\end{align}

\begin{proof}[Proof of Proposition~\ref{prop:int-c}]
  $\bullet$ To prove that $\tilde{\nu}_L$ is invariant, it is sufficient to prove that, for any $h \in \mathcal{C}^1_K(\tilde{\Bridges}_L)$ (the set of function in $\mathcal{C}^1(\tilde{\Bridges}_L)$ whose support is compact)% (i.e.\ there exists $K \in \RR^+$ such that, for any $(b,t) \in \tilde{\Bridges}_L$, if there exists $i$ such that $t_i > K$, then $h((b,t)) = 0$), 
  \begin{displaymath}
    \int_{(b,t) \in \tilde{\Bridges}_L} (Gh)((b,t)) \di{\tilde{\nu}_L}((b,t)) = 0. 
  \end{displaymath}

  \begin{align}
    & \int_{(b,t) \in \tilde{\Bridges}_L} (Gh)((b,t)) \di{\tilde{\nu}_L}((b,t))\\
    = & \frac{1}{Z} \int_{\tilde{\Bridges}_L} \left(\frac{\partial h}{\partial \vec{u}}((b,t)) + \sum_{i : b_i=1,b_{i+1}=-1} f_{|t_i-t_{i+1}|}(\min(t_i,t_{i+1})) \left(h((b^{(i)},t^{(i)})) - h((b,t)) \right) \right) F_{(b,t)}(0) \di{\Psi}((b,t))\\
    = & \frac{1}{Z} \int_{\tilde{\Bridges}_L} \frac{\partial h}{\partial \vec{u}}((b,t)) F_{(b,t)}(0) \di{\Psi}((b,t)) \nonumber \\
    & + \frac{1}{Z} \int_{{\tilde{\Bridges}}_L}\ \sum_{i : b_i=1,b_{i+1}=-1} h((b^{(i)},t^{(i)}))\ f_{|t_i-t_{i+1}|}(\min(t_i,t_{i+1})) F_{(b,t)}(0) \di{\Psi}((b,t)) \nonumber \\
    & - \frac{1}{Z} \int_{{\tilde{\Bridges}}_L}\ \sum_{i : b_i=1,b_{i+1}=-1} h((b,t))\ f_{|t_i-t_{i+1}|}(\min(t_i,t_{i+1})) F_{(b,t)}(0) \di{\Psi}((b,t)). \label{eq:sum1}
  \end{align}

  $\bullet$ Now, we rewrite the first term of the sum~\eqref{eq:sum1}
  \begin{align*}
    & \int_{\tilde{\Bridges}_L} \frac{\partial h}{\partial \vec{u}}((b,t)) F_{(b,t)}(0) \di{\Psi}((b,t)) \\
    = & \sum_{b \in \Bridges_L} \sum_{i : b_i=-1,b_{i+1}=1} \int_{t \in \Time^{(i)}_b} \int_{t'\in \RR_+} \frac{\partial h}{\partial \vec{u}}((b,(t_1+t',\dots,t_{2L}+t'))) F_{(b,t)}(t') \di{t'} \di{\Psi^{(i)}_b}(t).
  \end{align*}
  Now, we integrate by part the integral in $t'$, for that we could consider $h_{(b,t)}(t') = h((b,(t_1+t',\dots,t_{2L}+t'))$, hence $\displaystyle \frac{\partial h}{\partial \vec{u}}((b,(t_1+t',\dots,t_{2L}+t'))) = \frac{\di{h_{(b,t)}}}{\di{t'}}(t')$.
  \begin{align*}
    & \int_{t'\in \RR_+} \frac{\di{h_{(b,t)}}}{\di{t'}}(t') F_{(b,t)}(t') \di{t'} \\
    = & \left[ h_{(b,t)}(t') F_{(b,t)}(t') \right]_{0}^{\infty} - \int_{t'\in \RR^+} h_{(b,t)}(t') \frac{\di{F_{(b,t)}}}{\di t'}(t') \di{t'}\\
    = & 0 - h((b,t)) F_{(b,t)}(0) + \int_{t' \in \RR_+} \sum_{j:b_j=1,b_{j+1}=-1} h_{(b,t)}(t')\ f_{|t_j-t_{j+1}|}(\min(t_j,t_{j+1})+t')\ F_{(b,t)}(t') \di{t'}.
  \end{align*}
  So,
  \begin{align*}
    & \int_{\tilde{\Bridges}_L} \frac{\partial h}{\partial \vec{u}}((b,t)) F_{(b,t)}(0) \di{\Psi}((b,t)) \\
    = & - \sum_{b \in \Bridges_L} \sum_{i : b_i=-1,b_{i+1}=1} \int_{t \in \Time^{(i)}_b} h((b,t)) F_{(b,t)}(0) \di{\Psi^{(i)}_b}(t)\\
    & + \sum_{b \in \Bridges_L} \sum_{i : b_i=-1,b_{i+1}=1} \int_{t \in \Time^{(i)}_b}  \int_{t' \in \RR_+} \sum_{j:b_j=1,b_{j+1}=-1} h_{(b,t)}(t')\ f_{|t_j-t_{j+1}|}(\min(t_j,t_{j+1})+t')\ F_{(b,t)}(t') \di{t'} \di{\Psi^{(i)}_b}(t)\\
    = & - \sum_{b \in \Bridges_L} \sum_{i : b_i=-1,b_{i+1}=1} \int_{t \in \Time^{(i)}_b} h((b,t)) F_{(b,t)}(0) \di{\Psi^{(i)}_b}(t)\\
    & + \int_{\tilde{\Bridges}_L} \sum_{j:b_j=1,b_{j+1}=-1} h((b,t)) f_{|t_j-t_{j+1}|}(\min(t_j,t_{j+1})) F_{(b,t)}(0) \di{\Psi}((b,t))
  \end{align*}
  
  The second term of this last equation cancels with the third of~\eqref{eq:sum1}. So to end the proof, we need to show that
  \begin{align}
    & \int_{{\tilde{\Bridges}}_L}\ \sum_{i : b_i=1,b_{i+1}=-1} h((b^{(i)},t^{(i)}))\ f_{|t_i-t_{i+1}|}(\min(t_i,t_{i+1})) F_{(b,t)}(0) \di{\Psi}((b,t)) \nonumber \\
    = & \sum_{b \in \Bridges_L} \sum_{i : b_i=-1,b_{i+1}=1} \int_{t \in \Time^{(i)}_b} h((b,t)) F_{(b,t)}(0) \di{\Psi^{(i)}_b}(t).
  \end{align}

  $\bullet$ But,
  \begin{align*}
  & \int_{{\tilde{\Bridges}}_L}\ \sum_{i : b_i=1,b_{i+1}=-1} h((b^{(i)},t^{(i)}))\ f_{|t_i-t_{i+1}|}(\min(t_i,t_{i+1})) F_{(b,t)}(0) \di{\Psi}((b,t)) \\
    = & \sum_{b \in \Bridges_L} \int_{t \in \Time_b} \sum_{j : b_j=1,b_{j+1}=-1} h((b^{(j)},t^{(j)})) f_{|t_j-t_{j+1}|}(\min(t_j,t_{j+1})) F_{(b,t)}(0) \di{\Psi_b}(t).
  \end{align*}
  Here, we do a change a variable passing from $b$ to $c$ where $c = b^{(j)}$ and $t$ passing from $u$ to $t^{(j)}$.
  \begin{align*}
    = & \sum_{c \in \Bridges_L} \sum_{j : c_j=-1,c_{j+1}=1} \int_{u \in \Time^{(j)}_c} \di{\Psi^{(j)}_c}(u) h((c,u)) F_{(c,u)}(0) \\
      & \left( \frac{\ind{c_{j-1}=-1}}{\sqrt{f_0(u_{j-1})}} + \frac{\ind{c_{j-1}=1}}{\int_{s \in \RR_+} \sqrt{f_0(u_{j-1}+s)}\sqrt{f_0(s)} \di{s}} \right) \left( \frac{\ind{c_{j+2}=1}}{\sqrt{f_0(u_{j+2})}} + \frac{\ind{c_{j+2}=-1}}{\int_{s \in \RR_+} \sqrt{f_0(u_{j+2}+s)}\sqrt{f_0(s)} \di{s}} \right) \\
      & \left( \ind{c_{j-1}=-1}\ \ind{c_{j+2}=1}\ \underbrace{f_{|u_{j-1}-u_{j+2}|}(\min(u_{j-1},u_{j+2})) \int_{s \in \RR^+} \sqrt{f_0(u_{i-1}+s)} \sqrt{f_0(u_{i+2}+s)} \di{s}}_{= \sqrt{f_0(u_{j-1})} \sqrt{f_0(u_{j+2})}} \right. \\ 
      & \left. + \ind{c_{j-1}=1}\ \ind{c_{j+2}=1} \int_{s_{j}\in \RR_+} \right.\\
      & \qquad \underbrace{f_{|u_{j-1}+s_j-u_{j+2}|}(\min(u_{j-1}+s_j,u_{j+2})) \int_{s \in \RR^+} \sqrt{f_0(u_{i-1}+s_j+s)} \sqrt{f_0(u_{i+2}+s)} \di{s}}_{= \sqrt{f_0(u_{j-1}+s_j)} \sqrt{f_0(u_{j+2})}}\\
      & \qquad \qquad \sqrt{f_0((u_{j-1}+s_{j})-u_{j-1})}\di{s_{j}}\\ 
      & \left. + \ind{c_{j-1}=-1}\ \ind{c_{j+2}=-1} \int_{s_{j+1}\in \RR_+} \sqrt{f_0(u_{j-1})} \sqrt{f_0(u_{j+2}+s_{j+1})} \sqrt{f_0(s_{j+1})} \di{s_{j+1}} \right.\\
      & \left. + \ind{c_{j-1}=1}\ \ind{c_{j+2}=-1} \int_{s_{j}\in \RR_+} \int_{s_{j+1} \in \RR_+} \sqrt{f_0(u_{j-1}+s_j)} \sqrt{f_0(u_{j+2}+s_{j+1})} \sqrt{f_0(s_j)} \sqrt{f_0(s_{j+1})} \di{s_{j}} \di{s_{j+1}} \right) \\
    = &  \sum_{c \in \Bridges_L} \sum_{j : c_j=-1,c_{j+1}=1} \int_{u \in \Time^{(j)}_c} h((c,s)) F_{(c,s)}(0) \di{\Psi^{(j)}_c}(s). 
  \end{align*}
  In previous computation, we split the sum in 4 terms because when $c_{j-1} = -1$, then in $b$, $b_{j-1}=-1$ and $b_j=1$, so $t_j=t_{j-1}=u_{j-1}$ because $t \in \Time_b$ and $u = t^{(i)}$. Similarly, when $c_{j+2}=1$. But, when $c_{j-1}=1$, then in $b$, $t_j > t_{j-1} = u_{j-1}$, and we integrate on all possible value of $s_j = t_j-t_{j-1}$. Similarly, when $c_{j+2} = -1$.  
\end{proof}  

\section{Examples} \label{sec:examples}
In the first part of this section, we apply our previous results to the integrable LPP on the cylinders. That permits us to find a very simple expression of the asymptotic law of the front line. In the second part, we prove that the directed edge-LPP (as defined in Remarks~\ref{rem:banalite}) is a GLPP. And, in the third part, we discuss GLPP on the quarter-plane. In particular, we present some simulations of integrable GLPP on the quarter-plane for different $\mu_0$.\par

\subsection{Integrable LPP on the cylinders} \label{sec:classical}
In this section, we consider that $\mu_\Delta = \mu_0$ for any $\Delta \in \NN$ that corresponds to the LPP and that $\mu_0$ is a geometrical law (on $\NN^*$) of success parameter $p \in (0,1)$, i.e.\ for any $i \in \NN^*$,
\begin{equation}
  \mu_0(i) = p (1-p)^{i-1}.
\end{equation}
Hence, we get the integrable LPP in discrete time.

\begin{lemma} \label{lem:classical}
  Let $\mu_0 \in \mprob{\NN^*}$ and define $\mu = \left(\mu_\Delta\right)_{\Delta \in \NN}$ by~\C~\ref{cond:int}. The two following conditions are equivalent:
  \begin{itemize}
  \item for any $\Delta \in \NN$, $\mu_\Delta = \mu_0$,
  \item there exists $p \in (0,1)$ such that $\mu_0$ is a geometrical law (on $\NN^*$) of success parameter $p$.
  \end{itemize}
\end{lemma}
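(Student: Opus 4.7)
The plan is to prove both implications by direct computation from Cond~\ref{cond:int}, using only the case $\Delta=1$ for the non-trivial direction.

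For the easy direction $(\Leftarrow)$, I would substitute $\mu_0(i) = p(1-p)^{i-1}$ into~\eqref{eq:int}. The key observation is that $\sqrt{\mu_0(t)\mu_0(t+\Delta)} = p(1-p)^{t-1}(1-p)^{\Delta/2}$, so the $\Delta$-dependent factor $(1-p)^{\Delta/2}$ is independent of $t$ and cancels between numerator and denominator. This leaves $\mu_\Delta(t)$ proportional to $(1-p)^{t-1}$, and after normalization this equals $p(1-p)^{t-1} = \mu_0(t)$.

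For the harder direction $(\Rightarrow)$, suppose $\mu_\Delta=\mu_0$ for every $\Delta \in \NN$. Applying~\eqref{eq:int} with $\Delta=1$ alone will be enough: set $C := \sum_{s \in \NN^*}\sqrt{\mu_0(s)\mu_0(s+1)}$; then for every $t \in \NN^*$,
\begin{equation}
C\,\mu_0(t) \;=\; \sqrt{\mu_0(t)\,\mu_0(t+1)}.
\end{equation}
Since $\mu_0 \in \mprob{\NN^*}$ has full support, we may square and divide by $\mu_0(t)$ to obtain the two-term recursion $\mu_0(t+1) = C^2\,\mu_0(t)$. Iterating gives $\mu_0(t) = \mu_0(1)\,(C^2)^{t-1}$. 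The summability of $\mu_0$ forces $C^2 \in (0,1)$; writing $1-p := C^2$ (so $p \in (0,1)$) and imposing $\sum_t \mu_0(t)=1$ pins down $\mu_0(1)=p$, so that $\mu_0(t)=p(1-p)^{t-1}$, a geometric law on $\NN^*$.

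The only potential subtlety is checking that the conclusion from $\Delta=1$ is automatically consistent with the hypothesis for all other $\Delta$; but this is guaranteed by the $(\Leftarrow)$ direction already proved, so no additional verification is needed. No serious obstacle: the whole argument is a short algebraic manipulation once one notices that the $\Delta=1$ case of Cond~\ref{cond:int} already forces a geometric progression.
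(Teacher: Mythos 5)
Your proof is correct and follows essentially the same route as the paper's: for the $(\Leftarrow)$ direction both substitute the geometric law and cancel the $(1-p)^{\Delta/2}$ factor, and for the $(\Rightarrow)$ direction both apply Cond~\ref{cond:int} only with $\Delta=1$ to deduce that $\mu_0(t+1)/\mu_0(t)$ is constant. The paper eliminates the normalization constant by considering the ratio $\mu_0(i)/\mu_0(1) = \mu_1(i)/\mu_1(1)$, while you carry the constant $C$ explicitly and identify $C^2 = 1-p$; this is a cosmetic difference. (Your closing remark about ``consistency with other $\Delta$'' is unnecessary: using only the $\Delta=1$ instance of the hypothesis to derive the conclusion is automatically valid, as you are free to discard part of a hypothesis.)
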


\begin{proof}
  $1 \Leftarrow 2$: for any $\Delta \in \NN$ and $i \in \NN^*$:
  \begin{align*}
    \mu_{\Delta}(i) & = \frac{\sqrt{\mu_0(i) \mu_0(i+\Delta)}}{\sum_{s \geq 1} \sqrt{\mu_0(i) \mu_0(i+\Delta)}} = \frac{\sqrt{p (1-p)^{i-1} p (1-p)^{i+\Delta-1}}}{\sum_{s \geq 1} \sqrt{p (1-p)^{s-1} p (1-p)^{s+\Delta-1}}} \\
                    & = \frac{p (1-p)^{i-1} (1-p)^{\Delta/2}}{\sum_{s \geq 1} p (1-p)^{s-1} (1-p)^{\Delta/2}} = \frac{p (1-p)^{i-1}}{\sum_{s \geq 1} p (1-p)^{s-1}} = p (1-p)^{i-1}.
  \end{align*}\par

  $1 \Rightarrow 2$: for any $i \in \NN^*$:
  \begin{equation}
    \frac{\mu_0(i)}{\mu_0(1)} = \frac{\mu_1(i)}{\mu_1(1)} = \frac{\sqrt{\mu_0(i) \mu_0(i+1)}}{\sqrt{\mu_0(1) \mu_0(2)}}.
  \end{equation}
  Hence,
  \begin{equation}
    \sqrt{\frac{\mu_0(i)}{\mu_0(1)}} = \sqrt{\frac{\mu_0(i+1)}{\mu_0(2)}}.
  \end{equation}
  So, for any $i \in \NN^*$, by denoting $p = 1 - \frac{\mu_0(2)}{\mu_0(1)}$,
  \begin{equation}
    \frac{\mu_0(i+1)}{\mu_0(i)} = \frac{\mu_0(2)}{\mu_0(1)} = 1-p.
  \end{equation}
\end{proof}

In this particular case,
\begin{lemma}
  Let $p \in (0,1)$ and let $\mu_0$ be a geometrical law of success parameter $p$. For any $\Delta \in \NN$, take $\mu_\Delta = \mu_0$. Let's define the front line $(\Front_n)_{n \in \NN}$ as in Section~\ref{sec:gLPPdef}. In this case, the front line $(\Front_n)_{n \in \NN}$ is a Markov chain on $\Bridges_L$.
\end{lemma}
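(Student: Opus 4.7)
The plan is to exploit the memoryless property of the geometric law inside the transition dynamics of $(\tilde{\Front}_n)_{n \in \NN}$ described in Section~\ref{sec:proofHMC}. Recall from there that the only ``random'' ingredient in the transition from $\tilde{\Front}_n=(b,t)$ to $\tilde{\Front}_{n+1}$ is, for each local maximum of $b$ (an index $j$ with $b_j=1,b_{j+1}=-1$), an independent Bernoulli trial of parameter $p_m^{\delta}$ deciding whether this maximum becomes a local minimum, where $m=\min(t_j,t_{j+1})$ and $\delta=|t_j-t_{j+1}|$. The deterministic part of the dynamics (in case~1 of Section~\ref{sec:proofHMC}) only updates the time coordinates.

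First, I would compute $p_m^{\delta}$ when $\mu_\Delta=\mu_0$ is geometric of parameter $p$. Since $\mu_0(i)=p(1-p)^{i-1}$, one has
\begin{equation}
  p_m^{\delta} \;=\; \frac{\mu_0(1+m)}{\sum_{s\geq 1}\mu_0(s+m)} \;=\; \frac{p(1-p)^{m}}{(1-p)^{m}\sum_{s\geq 1}p(1-p)^{s-1}} \;=\; p.
\end{equation}
Thus the flipping probability at every local maximum is the same constant $p$, independent of both $m$ and $\delta$, and therefore independent of $t$.

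Next I would conclude Markovianity of $(\Front_n)_{n\in\NN}$. Given $\Front_n=b$, the law of $\Front_{n+1}$ is obtained by independently flipping each local maximum of $b$ into a local minimum with probability $p$ and leaving the other edges of $b$ unchanged (cases 1 and 2(b) preserve the bridge shape while case 2(a) performs the flip). This specifies a transition kernel $K$ on $\Bridges_L$ depending only on $b$. Formally, for any $b'\in\Bridges_L$ and any $t\in\Time_b$,
\begin{equation}
  \prob{\Front_{n+1}=b' \mid \tilde{\Front}_n=(b,t)} \;=\; K(b,b'),
\end{equation}
where $K(b,b')=p^{k}(1-p)^{\ell-k}$ if $b'$ is obtained from $b$ by flipping some subset of $k$ of the $\ell$ local maxima of $b$, and $0$ otherwise. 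Since the right-hand side does not depend on $t$, a standard conditioning argument gives
\begin{equation}
  \prob{\Front_{n+1}=b' \mid \Front_0,\dots,\Front_n=b} \;=\; \esp{K(b,b') \mid \Front_0,\dots,\Front_n=b} \;=\; K(b,b'),
\end{equation}
so $(\Front_n)_{n\in\NN}$ is a Markov chain on $\Bridges_L$ with kernel $K$.

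The only delicate point, and the one I would be most careful with, is the independence of the flips across distinct local maxima: this is not obvious from the hidden Markov structure alone but follows directly from the description of the kernel of $(\tilde{\Front}_n)_{n\in\NN}$ in Section~\ref{sec:proofHMC}, where the updates at distinct local maxima were defined as independent. Once that is noted, the rest is just reading off that the transition kernel $K$ factorises as a product of independent $\operatorname{Bernoulli}(p)$ trials, one per local maximum of $b$.
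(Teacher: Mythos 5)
Your proof is correct and takes essentially the same route as the paper, which simply asserts the kernel ($M_{b,c}$ = product of independent Bernoulli$(p)$ flips over local maxima) with the phrase "it is not difficult to see and check." You supply exactly the omitted check: the memorylessness of the geometric law makes $p_m^{\delta}=p$ constant in $(m,\delta)$, so the flip probabilities are independent of the auxiliary times $t$, and the tower/conditioning step then yields Markovianity of the projected chain $(\Front_n)$.
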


\begin{proof}
  It is not difficult to see and check that the Markov kernel $M = (M_{b,c})_{b,c \in \Bridges_L}$ is the following one: for any $b = (b_i)_{i \in \ZZL} \in \Bridges_L$, for any $c = (c_i)_{i \in \ZZL} \in \Bridges_L$,
  \begin{equation}
    M_{b,c} = \prod_{i : b_i = b_{i+1} = 1} \ind{b_i=c_i} \prod_{i:b_i = b_{i-1} = -1} \ind{b_i=c_i} \prod_{i:b_i=1=-b_{i+1}} ((1-p) \ind{c_i=1=-c_{i+1}} + p \ind{c_i=-1=-c_{i+1}}). 
  \end{equation}
  In words, nothing changes except on local maxima. Each local maximum becomes a local minimum independently with probability $p$.
\end{proof}

The Markov chain $(F_n)_{n \in \NN}$ is ergodic and its invariant measure is
\begin{proposition} \label{prop:clasCyl}
  Let $\mu_0$ be a geometrical law of success parameter $p \in (0,1)$ and take $\mu_\Delta = \mu_0$ for any $\Delta \in \NN$. The invariant law $\nu_L$ of the LPP with parameter $\mu$ is, for any $b \in \Bridges_L$,
  \begin{equation}
    \nu_L(b) = \frac{1}{Z} \frac{1}{(1-p)^{k_b}}
  \end{equation}
  with $\displaystyle Z = \sum_{b \in \Bridges_L} \frac{1}{(1-p)^{k_b}}$.
\end{proposition}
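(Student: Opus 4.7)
The plan is to exploit the preceding lemma, which identifies $(\Front_n)_{n \in \NN}$ as a Markov chain on $\Bridges_L$ whose kernel $M$ admits the following transparent description: each local maximum of $b$ (i.e.\ an index $i$ with $b_i = 1$ and $b_{i+1} = -1$) is independently flipped to a local minimum $(-1,1)$ with probability $p$ and left untouched with probability $1-p$, while all other coordinates of $b$ are preserved. Since the geometric law of parameter $p$ satisfies \C~\ref{cond:conv} (the ratio $\mu_0(t)/\sum_{s\geq t}\mu_0(s)$ equals $p$ for every $t$), Theorem~\ref{thm:HMC} yields ergodicity and uniqueness of the invariant probability measure, so it is enough to check that $\nu_L(b) = Z^{-1}(1-p)^{-k_b}$ is invariant under $M$.

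To run the invariance check I would enumerate the preimages of a fixed target $c \in \Bridges_L$. Because a step of $M$ alters $b$ only at its local maxima, a bridge $b$ lies in the preimage of $c$ if and only if there exists a subset $T$ of the local minima of $c$ (i.e.\ of indices $i$ with $c_i = -1$ and $c_{i+1} = 1$) such that $b = b_T$ is obtained from $c$ by flipping precisely the pairs indexed by $T$. In $b_T$ these flipped pairs are local maxima, so together with the $k_c$ unflipped local maxima inherited from $c$ they give $k_{b_T} = k_c + |T|$. The corresponding transition probability is then
\begin{equation*}
  M_{b_T,c} = p^{|T|}(1-p)^{k_c},
\end{equation*}
since exactly the $|T|$ maxima in $T$ are flipped and the remaining $k_c$ are kept; meanwhile $\nu_L(b_T) = Z^{-1}(1-p)^{-k_c-|T|}$.

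Summing over all preimages gives
\begin{equation*}
  (\nu_L M)(c) \;=\; \sum_{T \subseteq \mathrm{locmin}(c)} \frac{p^{|T|}(1-p)^{k_c}}{Z\,(1-p)^{k_c + |T|}} \;=\; \frac{1}{Z}\left(1 + \frac{p}{1-p}\right)^{k_c} \;=\; \frac{1}{Z(1-p)^{k_c}} \;=\; \nu_L(c),
\end{equation*}
using that $|\mathrm{locmin}(c)| = k_c$ (ups and downs of any bridge balance) and the binomial theorem. The one step requiring care, and which I see as the main obstacle, is the bijection between preimages of $c$ and subsets of its local minima: one must check that distinct flipped-pair patterns in $c$ come from distinct valid predecessors and, conversely, that every consistent $b$ mapping to $c$ is of this form. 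The essential point is that a single flip affects only the two adjacent coordinates, so preimages are parameterised by exactly those pairs $(i,i+1)$ in $c$ of pattern $(-1,1)$ that were produced by a flip, the remainder of $c$ coinciding with the remainder of $b$. As a sanity check one could derive the same formula from Theorem~\ref{thm:int}, since for $\mu_0(i) = p(1-p)^{i-1}$ the weight $W_{(b,t)}$ factors into pure powers of $(1-p)^{1/2}$ and the sum over $t \in \Time_b$ collapses; but the direct Markovian argument above is the cleaner route.
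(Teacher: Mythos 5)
Your approach is genuinely different from the paper's: the paper proves Proposition~\ref{prop:clasCyl} in Annex~\ref{sec:ann-LPP} as a corollary of Theorem~\ref{thm:int}, by explicitly summing the weight $W_{(b,t)}$ over $t \in \Time_b$ for the geometric $\mu_0$ (a fairly long combinatorial computation using Lemma~\ref{lem:comb}); you instead run a direct invariance check on the finite Markov chain $(\Front_n)_{n\in\NN}$ on $\Bridges_L$ using its simple flip kernel, which is cleaner and self-contained once the bookkeeping is right.

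However, the intermediate claim $k_{b_T} = k_c + |T|$ is false, and so is the resulting expression $M_{b_T,c} = p^{|T|}(1-p)^{k_c}$. Unflipping a local minimum of $c$ at index $j$ (turning $(c_j,c_{j+1})=(-1,1)$ into $(1,-1)$) does create a new local maximum of $b_T$ at $j$, but it also destroys any local maximum of $c$ at $j-1$ or at $j+1$, since the new pair overwrites $c_j$ and $c_{j+1}$. A concrete counterexample with $L=2$: take $c=(1,-1,1,-1)$, so $k_c=2$ with local minima at $2$ and $4$. Taking $T=\{2\}$ gives $b_T=(1,1,-1,-1)$, which has a single local maximum, so $k_{b_T}=1$, not $k_c+|T|=3$. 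Correspondingly $\nu_L(b_T)=Z^{-1}(1-p)^{-1}$ and $M_{b_T,c}=p$, neither of which matches your expressions.

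The saving grace is that the two errors cancel exactly, so your final line is still correct. The right value of a single term in the sum is
\begin{equation*}
  \nu_L(b_T)\,M_{b_T,c} \;=\; \frac{1}{Z\,(1-p)^{k_{b_T}}}\; p^{|T|}\,(1-p)^{k_{b_T}-|T|} \;=\; \frac{1}{Z}\left(\frac{p}{1-p}\right)^{|T|},
\end{equation*}
because to go from $b_T$ to $c$ one must flip exactly the $|T|$ local maxima of $b_T$ at the positions of $T$ and leave the other $k_{b_T}-|T|$ untouched; the factor $(1-p)^{k_{b_T}}$ cancels and the actual value of $k_{b_T}$ is irrelevant. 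From there the binomial theorem over the $2^{k_c}$ subsets of local minima of $c$ yields $Z^{-1}\left(1+\tfrac{p}{1-p}\right)^{k_c}=\nu_L(c)$ exactly as you write. So the fix is simply to delete the sentence asserting $k_{b_T}=k_c+|T|$ and instead write $M_{b_T,c}=p^{|T|}(1-p)^{k_{b_T}-|T|}$, letting the $(1-p)^{k_{b_T}}$ factor cancel against $\nu_L(b_T)$. The bijection between preimages of $c$ and subsets of its local minima, which you correctly flag as the delicate point, is fine: no two adjacent indices of $c$ can both be local minima, so distinct subsets $T$ give distinct bridges $b_T$, and every $b$ with $M_{b,c}>0$ arises this way since the dynamic changes $b$ only at flipped local maxima, which appear as local minima of $c$.
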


\begin{proof}
  It is a result known in the folklore of LPP. In Annex~\ref{sec:ann-LPP}, we deduce it as a corollary of Theorem~\ref{thm:int}.
\end{proof}

When $p \to 0$, we obtain the uniform measure on $\Bridges_L$. This suggests the following proposition:
\begin{proposition} \label{prop:clasCylR}
  Let $\mu_0$ be an exponential law of parameter $\lambda \in (0,\infty)$ and take $\mu_\Delta = \mu_0$ for any $\Delta \in \NN$. The invariant law $\nu_L$ of the LPP with parameter $\mu$ is the uniform law on $\Bridges_L$.
\end{proposition}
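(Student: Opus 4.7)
The plan is to apply Proposition~\ref{prop:int-c} and then integrate the times out explicitly. A first check: for $f_0(x) = \lambda e^{-\lambda x}$ one has $\sqrt{f_0(s) f_0(s+\Delta)} = \lambda\, e^{-\lambda s} e^{-\lambda \Delta/2}$, whose integral over $\RR_+^*$ equals $e^{-\lambda \Delta/2}$, so the ratio defining $f_\Delta$ in \C~\ref{cond:int-cont} recovers $f_0$; by memorylessness the exponential LPP coincides with the integrable GLPP corresponding to $\mu_0$ exponential. \C~\ref{cond:conv-cont} is also immediate (with $\alpha = \lambda$), so Proposition~\ref{prop:int-c} applies and furnishes an explicit density $g((b,t))$ of $\tilde{\nu}_L$ with respect to $\Psi$.

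Substituting $\sqrt{f_0(x)} = \sqrt{\lambda}\, e^{-\lambda x/2}$ and computing $\int_0^\infty \sqrt{f_0(s+t_i) f_0(s+t_{i+1})}\, \di{s} = e^{-\lambda (t_i+t_{i+1})/2}$, the density becomes
\begin{displaymath}
g((b,t)) = \frac{\lambda^{L - k_b}}{Z} \exp\!\left(-\tfrac{\lambda}{2}\Bigl[\sum_{i:\, b_i = b_{i+1}} |t_{i+1} - t_i| + \sum_{i:\, b_i=1,\, b_{i+1}=-1}(t_i + t_{i+1})\Bigr]\right),
\end{displaymath}
where $k_b$ is the common number of local minima and maxima of $b$. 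Labelling them cyclically by $m_1 < M_1 < m_2 < M_2 < \cdots$, writing $\tau_j$ for the value of $t$ at the $j$-th local minimum and $(h_j, h_j')$ for the left / right values at the $j$-th local maximum, the absolute-value sum telescopes on each monotone segment to $h_j - \tau_j$ (up) or $h_j' - \tau_{j+1}$ (down); combined with the local-max terms, the exponent reduces to $-\lambda\bigl[\sum_j (h_j + h_j') - \sum_j \tau_j\bigr]$.

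To obtain $\nu_L(b) = \int g((b,t))\, \di{\Psi_b}(t)$, I would integrate in stages. The intermediate edge-values in the up-segment of length $a_j = M_j - m_j$ lie in a simplex $\tau_j < \cdots < h_j$ of volume $(h_j - \tau_j)^{a_j - 2}/(a_j - 2)!$ on which the integrand is constant; substituting $u_j = h_j - \tau_j$ and applying the gamma integral yields $e^{-\lambda \tau_j}\, \lambda^{-(a_j - 1)}$, and similarly $e^{-\lambda \tau_{j+1}}\, \lambda^{-(b_j - 1)}$ on the down-segment of length $b_j = m_{j+1} - M_j$. Combined with the local-max factor $e^{-\lambda(h_j + h_j')/2}$, each excursion contributes $\lambda^{1 - (a_j + b_j)/2}\, e^{-\lambda(\tau_j + \tau_{j+1})/2}$. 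Multiplying over $j$ (using $\sum_j (a_j + b_j) = 2L$ and $\sum_j (\tau_j + \tau_{j+1}) = 2\sum_j \tau_j$) gives $\lambda^{k_b - L}\, e^{-\lambda \sum_j \tau_j}$, and integrating each $\tau_j \in \RR_+$ contributes $\lambda^{-k_b}$; combined with the prefactor $\lambda^{L - k_b}$, all $k_b$-dependence cancels, so $\nu_L(b) = \lambda^{-L}/Z$ is independent of $b$ and hence $\nu_L$ is uniform on $\Bridges_L$. The main delicate step is to verify that the per-excursion formula remains valid in the degenerate adjacency cases $a_j = 1$ or $b_j = 1$ (where $h_j = \tau_j$ or $h_j' = \tau_{j+1}$ is forced and a degree of freedom disappears); a direct inspection shows the same formula $\lambda^{1 - (a_j + b_j)/2}\, e^{-\lambda(\tau_j + \tau_{j+1})/2}$ holds in those cases as well.
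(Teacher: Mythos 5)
Your approach is correct and genuinely different from the paper's. The paper dispenses with $\tilde{\Bridges}_L$ entirely: by memorylessness of the exponential, $(\Front_n)_{n\in\RR_+}$ itself is a Markov jump process on the finite set $\Bridges_L$, each of the $k_b$ local maxima of $b$ flipping to a local minimum independently at rate $\lambda$; the uniform law is then stationary because $b$ has out-rate $k_b\lambda$ and exactly $k_b$ neighbours from which it can be entered, each at rate $\lambda$. That is a two-line balance argument. Your route instead invokes the heavy machinery of Proposition~\ref{prop:int-c} and marginalises $t$ out of the explicit density $g((b,t))$, which is precisely the continuous-time counterpart of what the paper does in its Annex when it re-derives Proposition~\ref{prop:clasCyl} (the geometric case) from Theorem~\ref{thm:int}. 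Both are valid; yours buys nothing extra for this particular statement but does illustrate that Proposition~\ref{prop:int-c} specialises correctly, which is a useful sanity check.

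One bookkeeping slip to flag. Keeping the prefactor $\lambda^{L-k_b}$ aside, integrating $h_j$ together with the $a_j-2$ intermediates against the $h_j$-dependent part $e^{-\lambda h_j}$ of the reduced exponent gives $e^{-\lambda\tau_j}\,\lambda^{-(a_j-1)}$ (and similarly for the down-segment), and combining with the leftover factor $e^{+\lambda\tau_j}$ one gets per excursion $\lambda^{2-a_j-b_j}\,e^{-\lambda(\tau_j+\tau_{j+1})/2}$, \emph{not} $\lambda^{1-(a_j+b_j)/2}$. Your stated per-excursion exponent would be correct only if the $\lambda^{(a_j+b_j-2)/2}$ share of the prefactor had already been distributed in, but then the later phrase ``combined with the prefactor $\lambda^{L-k_b}$'' double-counts it: read literally your computation yields $\lambda^{k_b-L}\cdot\lambda^{-k_b}\cdot\lambda^{L-k_b}=\lambda^{-k_b}$, which is $b$-dependent, not $\lambda^{-L}$. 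Done consistently (for instance $\lambda^{L-k_b}\cdot\lambda^{2k_b-2L}\cdot\lambda^{-k_b}=\lambda^{-L}$), the $b$-dependence does cancel and one lands on $\nu_L(b)=\lambda^{-L}/Z$, so the conclusion and the overall strategy stand; only the intermediate accounting needs to be straightened out. Your remark on the degenerate adjacencies $a_j=1$ or $b_j=1$ is well taken and indeed the formula $e^{-\lambda\tau_j}\lambda^{-(a_j-1)}$ continues to hold there.
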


\begin{proof}
  It is also a very well-known result about LPP on cylinders. A simple way to prove it is to remark that: for any $b$ with $k$ local maxima (and $k$ local minima), during a short period of time $\di n$,
  \begin{itemize}
  \item if $\Front_n = b$, it goes out of $b$ with probability $k \lambda \di n + o(\di n)$ and
  \item if $\Front_n \neq b$, then there is $k$ ways to become $b$ (it corresponds to the $k$ bridges where one and only one of the minimum local of $b$ is a maximum local), and so under the uniform measure on $\Bridges_L$, the probability for $\Front_{n+\di n}$ to be $b$ is $k \lambda \di n + o(\di n)$. \qedhere
  \end{itemize}
\end{proof}

\subsection{Classical edge-LPP} \label{sec:edge-classical}
In this section, we just want to prove that the classical edge-LPP is just a particular case of the GLPP.\par

\begin{lemma} \label{lem:edge-classical}
  Let $\mu \in \mprob{\NN^*}$. The ``classical directed edge-LPP'' with weight law $\mu$ on edges is the GLPP with parameter $(\mu_\Delta)_{\Delta \in \NN}$ where, for any $\Delta \in \NN$, any $i \in \NN^*$,
  \begin{equation}
    \mu_{\Delta}(i) = \mu(i+\Delta) \sum_{j=1}^{i-1} \mu(j) + \mu(i) \sum_{j=1}^{i-1+\Delta} \mu(j) + \mu(i) \mu(i + \Delta).
  \end{equation}
\end{lemma}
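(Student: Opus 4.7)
The plan is to verify the claim by comparing the two recursive definitions vertex by vertex, using the following one-step computation of the conditional distribution of a new increment.

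Let $(\eta_e)_{e}$ be i.i.d.\ $\mu$-distributed edge weights on the edges of $\NN^2$, and write $\tau(x,y)$ for the associated directed edge-LPP value (with the convention $\tau(-1,y)=\tau(x,-1)=0$). By the standard induction, $\tau(x,y) = \max\bigl(\tau(x-1,y)+\eta_{e_1},\ \tau(x,y-1)+\eta_{e_2}\bigr)$, where $e_1$ is the edge from $(x-1,y)$ to $(x,y)$ and $e_2$ the edge from $(x,y-1)$ to $(x,y)$. The key observation is that $e_1$ and $e_2$ are \emph{new} edges: they do not appear in any path to $(x-1,y)$ or $(x,y-1)$. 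Hence the pair $(\eta_{e_1},\eta_{e_2})$ is independent of $\bigl(\tau(x',y') : (x',y')\preceq (x,y),\ (x',y')\neq (x,y)\bigr)$, and we may compute the law of the increment conditionally on the two past values.

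Write $a = \tau(x-1,y)$, $b = \tau(x,y-1)$, $m = \max(a,b)$ and $\Delta = |a-b|$. By symmetry of the roles of $e_1,e_2$ (since the edge weights are i.i.d.) it suffices to treat the case $a \leq b$, i.e.\ $m=b$ and $b-a = \Delta$. Then $\tau(x,y) - m = \max(\eta_{e_1}-\Delta,\ \eta_{e_2})$, and for any $i \in \NN^*$ a direct case analysis (splitting on which of $\eta_{e_2}=i$ or $\eta_{e_1}=i+\Delta$ realises the maximum, and handling the tie) gives
\begin{align*}
\prob{\max(\eta_{e_1}-\Delta,\eta_{e_2}) = i}
&= \prob{\eta_{e_2}=i,\ \eta_{e_1}\leq i+\Delta-1} + \prob{\eta_{e_1}=i+\Delta,\ \eta_{e_2}\leq i-1} + \prob{\eta_{e_1}=i+\Delta,\ \eta_{e_2}=i}\\
&= \mu(i)\sum_{j=1}^{i+\Delta-1}\mu(j) + \mu(i+\Delta)\sum_{j=1}^{i-1}\mu(j) + \mu(i)\mu(i+\Delta),
\end{align*}
which is exactly $\mu_\Delta(i)$ as defined in the statement. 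By independence of $(\eta_{e_1},\eta_{e_2})$ from the past, this is the conditional law of $\tau(x,y)-m$ given $(\tau(x',y'))_{(x',y')\prec(x,y)}$ and $\Delta$.

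It then remains to assemble these one-step identities. Proceeding by induction on $(x,y)$ in any order compatible with the partial order on $\NN^2$, one checks that the joint law of $(\tau(z))_{z\in\NN^2}$ coincides with that of a GLPP with parameter $(\mu_\Delta)_{\Delta\in\NN}$ above: at each new vertex the conditional law of the new value given the past matches the GLPP transition, so the two processes have equal finite-dimensional distributions, and Kolmogorov's extension theorem concludes. The only point requiring a bit of care is the combinatorial bookkeeping in the one-step computation (avoiding double-counting the tie $\eta_{e_2}=i=\eta_{e_1}-\Delta$), which accounts for the third summand $\mu(i)\mu(i+\Delta)$ in the expression for $\mu_\Delta(i)$; everything else is a straightforward conditioning argument.
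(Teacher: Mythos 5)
Your proposal is correct and takes essentially the same approach as the paper: both proofs isolate the two fresh edges incident to the new vertex, observe that their weights are independent of the past, and compute the law of $\max(\eta_{e_1}-\Delta,\eta_{e_2})$ by splitting on which term attains the maximum (plus the tie term $\mu(i)\mu(i+\Delta)$). The only difference is that you spell out the independence-from-the-past and Kolmogorov assembly steps explicitly, which the paper leaves implicit.
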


\begin{proof}
  Let's suppose that $(i,j)$ arrived at time $a$ and $(i+2,j)$ arrived at time $b = a + \Delta$. Then, $(i+1,j+1)$ arrives at time $t = \max(a + \zeta_1,a + \Delta + \zeta_2)$ where $(\zeta_1,\zeta_2)$ are i.i.d.\ with law $\mu$. Now suppose that $t$ could be rewritten as $a+\Delta + \zeta$ where $\zeta \sim \mu_\Delta$. Hence,
  \begin{align*}
    & \mu_{\Delta}(i) = \prob{\zeta = i} \\
    = & \prob{a + \Delta + i = a + \zeta_1 > a + \Delta + \zeta_2} + \prob{a + \Delta + i = a + \Delta + \zeta_2 > a + \zeta_1} \\
    & \quad + \prob{a + \Delta + i = a + \Delta + \zeta_2 = a + \zeta_1} \\
    = & \prob{\zeta_1 = i + \Delta \text{ and } \zeta_2 < i} + \prob{\zeta_1 < i + \Delta \text{ and } \zeta_2 = i} + \prob{\zeta_1 = i + \Delta \text{ and } \zeta_2 = i} \\
    = & \mu(i+\Delta) \sum_{j=1}^{i-1} \mu(j) + \mu(i) \sum_{j=1}^{i-1+\Delta} \mu(j) + \mu(i) \mu(i + \Delta). \qedhere
  \end{align*}
\end{proof}

It seems that there does not exist an integrable model of classical directed edge-LPP via our methods.

\subsection{GLPP on the quarter-plane} \label{sec:exQuarter}
In this section, we make a few comments and remarks about the difference between the LPP on the quarter-plane and the GLPP on the quarter-plane.\par

We give first a formal definition of the GLPP on the quarter-plane. It is the same as the one given in the introduction but with a translation by the vector $(1,1)$.\par
Let $(\mu_{\Delta})_{\Delta \in \NN} \in \mprob{\NN^*}^{\NN}$. To each cell $(x,y) \in \NN^2$, we associate a number $\TL((x,y))$ such that
% \begin{align*}
%   & \TL((x,0)) = 0 \text{ for any } x \in \NN, \\
%   & \TL((0,y)) = 0 \text{ for any } y \in \NN, \\
%   & \TL((x,y)) = \max(\,\TL((x,y-1))\, , \, \TL((x-1,y))\,) + \xi_{(x,y)} \text{ for any } (x,y) \in (\NN^*)^2.
% \end{align*}
\begin{itemize}
  \item $\TL((x,0)) = 0$ for any $x \in \NN$,
  \item $\TL((0,y)) = 0$ for any $y \in \NN$,
  \item $\TL((x,y)) = \max(\,\TL((x,y-1))\, , \, \TL((x-1,y))\,) + \xi_{(x,y)}$ for any $(x,y) \in (\NN^*)^2$
\end{itemize}
where $\xi_{(x,y)} \sim \mu_{|\TL(x,y-1) - \TL(x-1,y)|}$ and $\left( \xi_{(x,y)} \right)_{(x,y) \in \cyl{L}}$ are independent. \par

As before, we could be interested in the study of the curve $\Front_n$ that splits $\{z \in (\NN^*)^2 : T(z) \leq n\}$ and $\{z \in (\NN^*)^2 : T(z) > n\}$, and, in particular, by its asymptotic shape when $n \to \infty$.\par

Contrary to the classical case (see Theorem~\ref{thm:classicalQuarter}), in GLPP, in most cases, the superadditive property of $(\TL(z))_{z \in \NN^2}$ does not hold. But we could obtain it in some special (and restrictive?) cases:
\begin{proposition} \label{prop:concave}
  Let $\mu = (\mu_\Delta)_{\Delta \in \NN} \in \mprob{\NN^*}^{\NN}$. 
  If, the following condition holds
  \begin{cond} \label{cond:subadd}
    for any $\Delta \in \NN$,
    \begin{equation}
      \mu_{\Delta}([0,n]) \leq \mu_{\Delta+1}([0,n]) \leq \mu_{\Delta}([0,n+1]),
    \end{equation}
  \end{cond}
  then, for any $z_1,z_2 \in \NN^2$,
  \begin{equation}
    \TL(z_1+z_2) \geq \TL(z_1) + \TL(z_2)\ a.s.
  \end{equation}
\end{proposition}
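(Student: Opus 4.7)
My plan is to construct a coupling of $(\TL(z))_{z \in \NN^2}$ with a second GLPP $(\TL'(z))_{z \in \NN^2}$ of the same law such that, pointwise on the coupled space, $\TL(z_1+z_2)-\TL(z_1) \geq \TL'(z_2)$ almost surely; combined with $\TL' \eqd \TL$, this is the content of the superadditive inequality (and is the form actually needed to invoke a Kingman-type ergodic argument).

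The first step is to reformulate \C~\ref{cond:subadd} as a regularity property of the quantile functions $F_\Delta^{-1}(u) := \inf\{n \geq 1 : \mu_\Delta([0,n]) \geq u\}$. The first half of \C~\ref{cond:subadd} gives $F_{\Delta'}^{-1}(u) \leq F_\Delta^{-1}(u)$ for $\Delta' \geq \Delta$, while iterating $\mu_{\Delta+1}([0,n]) \leq \mu_\Delta([0,n+1])$ yields $F_\Delta^{-1}(u) \leq F_{\Delta'}^{-1}(u) + (\Delta'-\Delta)$. Together,
\begin{equation*}
0 \leq F_\Delta^{-1}(u) - F_{\Delta'}^{-1}(u) \leq \Delta' - \Delta, \qquad \Delta' \geq \Delta,\ u \in (0,1).
\end{equation*}
I would then realise $\TL$ via i.i.d.\ uniforms $(U_z)_{z \in \NN^2}$ on $(0,1)$ by setting $\xi_z := F_{\Delta_z}^{-1}(U_z)$, with $\Delta_z := |\TL(z-(1,0)) - \TL(z-(0,1))|$. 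The cases where $z_1$ or $z_2$ has a vanishing coordinate follow from the monotonicity of $\TL$ in each coordinate (which holds because $\xi_z \geq 1$), so I may assume $z_1, z_2 \in (\NN^*)^2$.

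Writing $V(z) := \TL(z_1+z) - \TL(z_1)$ for $z \in \NN^2$, and defining $\TL'$ on $\NN^2$ as the GLPP (with the usual zero boundary on the axes) driven by the shifted uniforms $(U_{z_1+z})_{z \in \NN^2}$, the translation-invariance of the i.i.d.\ uniforms yields $\TL' \eqd \TL$. A direct rewriting of the GLPP recursion at $z_1+z$ shows that, on $(\NN^*)^2$, $V$ satisfies the same quantile-driven recursion as $\TL'$ with the same random input $U_{z_1+z}$, while on the axes of $\NN^2$ one has $V(z) \geq 0 = \TL'(z)$ by monotonicity of $\TL$. I would then induct on $|z|_1$ to prove $V(z) \geq \TL'(z)$ for every $z \in \NN^2$. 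Setting $e_1 := (1,0),\ e_2 := (0,1)$, $V_i := V(z-e_i)$, $T_i := \TL'(z-e_i)$, and $U := U_{z_1+z}$, the inductive step reduces to
\begin{equation*}
\bigl[\max(V_1,V_2) - \max(T_1,T_2)\bigr] + \bigl[F_{|V_1-V_2|}^{-1}(U) - F_{|T_1-T_2|}^{-1}(U)\bigr] \geq 0.
\end{equation*}
Using $\max(a,b) = \min(a,b) + |a-b|$ and $V_i \geq T_i$ (so that $\min(V_1,V_2) \geq \min(T_1,T_2)$), the first bracket is bounded below by both $0$ and $|V_1-V_2|-|T_1-T_2|$; the Lipschitz control from Step~1 bounds the second bracket below by $0$ when $|V_1-V_2| \leq |T_1-T_2|$ and by $-(|V_1-V_2|-|T_1-T_2|)$ otherwise. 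A short case distinction on the sign of $|V_1-V_2|-|T_1-T_2|$ closes the induction, and specialising to $z = z_2$ gives $\TL(z_1+z_2) - \TL(z_1) \geq \TL'(z_2)$, the claim.

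The main obstacle is the combinatorial balancing in the inductive step: one must verify that the deterministic gain $\max(V_1,V_2) - \max(T_1,T_2)$ always dominates the possible random loss $F_{|T_1-T_2|}^{-1}(U) - F_{|V_1-V_2|}^{-1}(U)$, in both regimes $|V_1-V_2| \lessgtr |T_1-T_2|$. Once \C~\ref{cond:subadd} is rephrased as a $1$-Lipschitz, nonincreasing control on $\Delta \mapsto F_\Delta^{-1}(u)$ and $V$ is identified as the GLPP recursion driven by the shifted noise, this reduces to a short elementary check; the rest of the argument is essentially bookkeeping with boundary values.
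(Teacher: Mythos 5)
Your argument is correct and takes essentially the same route as the paper: both proofs couple the two percolations through a common driving randomness (the paper via a boundary-conditioned GLPP $\TL_\omega$ plus a quantile coupling invoked at the end, you via shifted i.i.d.\ uniforms built in from the start) and then induct along the lattice, with \C~\ref{cond:subadd} controlling the inductive step. Your repackaging of \C~\ref{cond:subadd} as a nonincreasing, one-Lipschitz bound on $\Delta \mapsto F_\Delta^{-1}(u)$ collapses the paper's four-way case split into a clean two-case check, but the underlying mechanism is the same.
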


\begin{remark}
  Before to do the proof, we define a GLPP on the quarter-plane \emph{with boundary condition} $\omega = (\omega_i)_{i \in \ZZ}$ by taking $\TL((x,0)) = \omega_x$ for any $x \in \NN$ and $\TL((0,y)) = \omega_{-y}$ for any $y \in \NN$ in the definition above. In this case, we denote by $(\TL_\omega(z))_{z \in \NN^2}$, the arrival times of squares, and we keep $\TL(z)$ for $\TL_{0^\ZZ}(z)$.
\end{remark}

\begin{proof}
  Obviously, $\TL(z_1+z_2) = \TL(z_1) + (\TL(z_1+z_2) - \TL(z_1)) = \TL(z_1) + \TL_{\omega}(z_2)$ where $\omega = (\omega_i)_{i \in \ZZ}$ with
  \begin{equation}
    \omega_i = \begin{cases}
      \TL(z_1+(i,0)) - \TL(z_1) & \text{ if } i \geq 0, \\
      \TL(z_1+(0,-i)) - \TL(z_1) & \text{ if } i \leq 0.
    \end{cases}
  \end{equation}

  Now, to obtain the superadditivity property, we prove the following lemma
  \begin{lemma}
    Let $\mu = (\mu_\Delta)_{\Delta \in \NN} \in \mprob{\NN^*}^{\NN}$ be such that \C~\ref{cond:subadd} holds, and let $(\omega_i)_{i \in \ZZ} \in \NN^\ZZ$ be any sequence that decreases on $(-\infty,0] \cap \ZZ$ and increases on $[0,\infty) \cap \ZZ$. Then, for any $z \in \NN^2$, $\TL_\omega(z) \geq \TL(z)$ (stochastically).
  \end{lemma}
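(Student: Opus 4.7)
The plan is to couple the two GLPP processes $(\TL(z))_{z \in \NN^2}$ and $(\TL_\omega(z))_{z \in \NN^2}$ on a single probability space and prove $\TL_\omega(z) \geq \TL(z)$ for all $z$ almost surely. I will drive both processes by one family $(U_z)_{z \in \NN^2}$ of i.i.d.\ uniforms on $[0,1]$, using the inverse-CDF coupling at each site: writing $F_\Delta$ for the CDF of $\mu_\Delta$, set $\xi_z = F_{\delta_z}^{-1}(U_z)$ and $\xi^\omega_z = F_{\Delta_z}^{-1}(U_z)$, with $\delta_z = |\TL(z-e_1)-\TL(z-e_2)|$ and $\Delta_z = |\TL_\omega(z-e_1)-\TL_\omega(z-e_2)|$. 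Conditionally on the past of both processes, these have the correct marginals $\mu_{\delta_z}$ and $\mu_{\Delta_z}$.

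First I would extract the analytic content of \C~\ref{cond:subadd}. Iterating the double inequality gives, for any $\Delta' \geq \Delta$ and any $n \in \NN$,
\begin{equation*}
F_\Delta(n) \leq F_{\Delta'}(n) \leq F_\Delta(n + (\Delta' - \Delta)).
\end{equation*}
Passing to inverse CDFs yields, pointwise in $u \in (0,1)$,
\begin{equation*}
F_\Delta^{-1}(u) - (\Delta' - \Delta) \leq F_{\Delta'}^{-1}(u) \leq F_\Delta^{-1}(u);
\end{equation*}
that is, in the chosen coupling, increasing the gap parameter by $k$ decreases the waiting time by between $0$ and $k$. In particular, at every site $z$ one has $\xi^\omega_z \geq \xi_z - (\Delta_z - \delta_z)^+$.

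I would then induct on the level $x+y$. The base case is immediate from $\omega \geq 0$ on the boundary. For the inductive step let $z = (x,y)$ with $x,y \geq 1$ and set $a = \TL(z-e_1)$, $b = \TL(z-e_2)$, $A = \TL_\omega(z-e_1)$, $B = \TL_\omega(z-e_2)$. The inductive hypothesis applied to $z-e_1$ and $z-e_2$ gives $A \geq a$ and $B \geq b$, so $\eta := \max(A,B) - \max(a,b) \geq 0$. Since $\TL_\omega(z) \geq \TL(z)$ is equivalent to $\xi^\omega_z + \eta \geq \xi_z$, by the previous paragraph it suffices to establish the single key inequality
\begin{equation*}
\Delta_z - \delta_z \leq \eta.
\end{equation*}

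This reduces to a four-case analysis on the signs of $A-B$ and $a-b$. For example, when $A \geq B$ and $a \geq b$, one has $\Delta_z - \delta_z = (A-B)-(a-b) = (A-a)-(B-b) \leq A-a = \eta$, using $B \geq b$. In the mixed case $A \geq B$, $a \leq b$, one first notes $A \geq B \geq b$ and $a \leq b \leq B$, so $\eta = A-b$ and $\Delta_z - \delta_z = A-B-b+a \leq A-b = \eta$. The remaining two cases are symmetric. The hard part will not be conceptual but rather organising this case analysis cleanly; the substance of the argument is the recognition that \C~\ref{cond:subadd} is exactly what makes the inverse-CDF coupling both stochastically monotone and $1$-Lipschitz in $\Delta$, which, combined with $A \geq a$ and $B \geq b$, is precisely what is needed to propagate the dominance from the boundary to the whole quarter-plane.
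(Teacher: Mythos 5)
Your proof is correct and follows essentially the same strategy as the paper's: an inverse-CDF coupling driving both processes, induction on the level $x+y$, and a case analysis establishing the local comparison. Your organisation --- isolating the single inequality $\Delta_z - \delta_z \leq \eta$ together with the monotonicity and $1$-Lipschitz property of $\Delta \mapsto F_\Delta^{-1}(u)$ extracted from \C~\ref{cond:subadd} --- is arguably cleaner than the paper's split into cases on the sign of $a-b$ and on which predecessor attains the max, but the mathematical substance is identical.
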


  \begin{proof}
    The proof is done by induction. First, for any $z = (0,y)$ or $z=(x,0)$, $\TL(z) = 0 \leq \TL_\omega(z)$. Now take $z=(x+1,y+1)$ with $(x,y) \in \NN^2$,
    \begin{equation}
      \TL_\omega(z) = \max(\TL_\omega((x,y+1)) , \TL_\omega((x+1,y))) + \xi_{z}
    \end{equation}
    with $\xi_z \sim \mu_{|\TL_\omega((x,y+1)) - \TL_\omega((x+1,y))|}$.

    Now, by induction, we know that $\TL_\omega((x,y+1)) = \TL((x,y+1) + a$ and $\TL_\omega((x+1,y)) = \TL((x+1,y) + b$ with $a,b \geq 0$. Hence,
    \begin{equation}
      \TL_\omega(z) = \max(\TL((x,y+1)) + a , \TL((x+1,y)) + b) + \xi_{z}.
    \end{equation}

    Now, we have to split into 4 cases, but only 2 by symmetry (we suppose that $\TL((x,y+1)) \geq \TL((x+1,y))$, the case $\TL((x,y+1)) \leq \TL((x+1,y))$ is similar).
    \begin{itemize}
    \item If $\max(\TL((x,y+1)) + a , \TL((x+1,y)) + b) = \TL((x,y+1)) + a$, then
      \begin{equation}
        \TL_\omega(z) = \max(\TL((x,y+1)),\TL((x+1,y))) + (a + \xi_{z}). 
      \end{equation}
      In that case, we have to prove that $a+\xi_z$ is stochastically greater than $\tilde{\xi}_z$ where $\tilde{\xi}_z \sim \mu_{\TL((x,y+1)) - \TL((x+1,y)}$, that is: for any $i$,
      \begin{align}
        \prob{\tilde{\xi_z} \leq i} \geq &\ \prob{a+\xi_z \leq i} \nonumber \\
        \mu_{\TL((x,y+1)) - \TL((x+1,y)}([0,i]) \geq &\ \mu_{\TL_\omega((x,y+1)) - \TL_\omega((x+1,y)}([0,i-a]) \nonumber \\
        \mu_{\TL((x,y+1)) - \TL((x+1,y)}([0,i + a ]) \geq &\ \mu_{\TL((x,y+1)) - \TL((x+1,y) + (a-b)}([0,i]). \nonumber
      \end{align}
      \begin{itemize}
      \item If $a-b =0$, it is obvious because $a \geq 0$.
      \item If $a-b < 0$, we use $(b-a)$ times the left size of~\C~\ref{cond:subadd}, to get that
        \begin{equation}
          \mu_{\TL((x,y+1)) - \TL((x+1,y)}([0,i + a ]) \geq \mu_{\TL((x,y+1)) - \TL((x+1,y) + (a-b)}([0,i + a ])
        \end{equation}
        and we conclude as in the case $a-b=0$.
      \item If $a-b > 0$, then we use $(a-b)$ times the right size of~\C~\ref{cond:subadd} to get that 
        \begin{equation}
          \mu_{\TL((x,y+1)) - \TL((x+1,y)}([0,i + a ]) \geq \mu_{\TL((x,y+1)) - \TL((x+1,y) + (a-b)}([0,i +b])
        \end{equation}
        and we conclude as in the case $a-b=0$ using the fact that $b \geq 0$.
      \end{itemize}

    \item If $\max(\TL((x,y+1)) + a , \TL((x+1,y)) + b) = \TL((x+1,y)) + b = \TL((x,y+1)) + (b-a)$, then
      \begin{equation}
        \TL_\omega(z) = \max(\TL((x,y+1)),\TL((x+1,y))) + ((b-a) + \xi_{z}). 
      \end{equation}
      In that case, we have to prove that, $(b-a)+\xi_z$ is stochastically greater than $\tilde{\xi}_z$ where $\tilde{\xi}_z \sim \mu_{\TL((x,y+1)) - \TL((x+1,y)}$, that is, for any $i$,
      \begin{align}
        \prob{\tilde{\xi_z} \leq i} \geq &\ \prob{(b-a)+\xi_z \leq i} \nonumber \\
        \mu_{\TL((x,y+1)) - \TL((x+1,y)}([0,i]) \geq &\ \mu_{\TL_\omega((x,y+1)) - \TL_\omega((x+1,y)}([0,i-(b-a)]) \nonumber \\
        \mu_{\TL((x,y+1)) - \TL((x+1,y)}([0,i + (b-a)]) \geq &\ \mu_{\TL((x,y+1)) - \TL((x+1,y) + (b-a)}([0,i]). \nonumber
      \end{align}
      The last condition is obtained by applying the right size of~\C~\ref{cond:subadd} $(b-a)$ times.
    \end{itemize}
    That permits to conclude, that, for any $z \in \NN^2$, $\TL_\omega(z) \geq \TL(z)$ (stochastically). 
  \end{proof}

  So, by this lemma, we find that
  \begin{equation}
    \TL(z_1+z_2) \geq \TL(z_1) + \TL(z_2) \text{ a.s.}
  \end{equation}
  The ``almost sure'' is obtainable by choosing a good coupling between $\xi_z$ and $\tilde{\xi}_z$. We can use the most naive one: let $U$ be uniform on $[0,1]$, $\xi_z = F^{-1}(U)$ and $\tilde{\xi}_z = \tilde{F}^{-1}(U)$ where $F$ and $\tilde{F}$ are the cumulative distribution function of $\xi_z$ and $\tilde{\xi}_z$.\par

  Hence, we get the superadditivity property.
\end{proof}

To conclude this section, we present three simulations of GLPP on the quarter-plane. In any case, we are under the integrability condition~\C~\ref{cond:int} and we choose $\mu_0$ is a Poisson law, a geometrical law (classical LPP), and a Zeta law of parameter $\alpha > 2$ (i.e.~$\prob{i} = \frac{1}{Z} \frac{1}{i^\alpha}$). See~Figure~\ref{fig:simu}.\par

\begin{figure}
  \begin{center}
    \includegraphics[width=0.3 \textwidth]{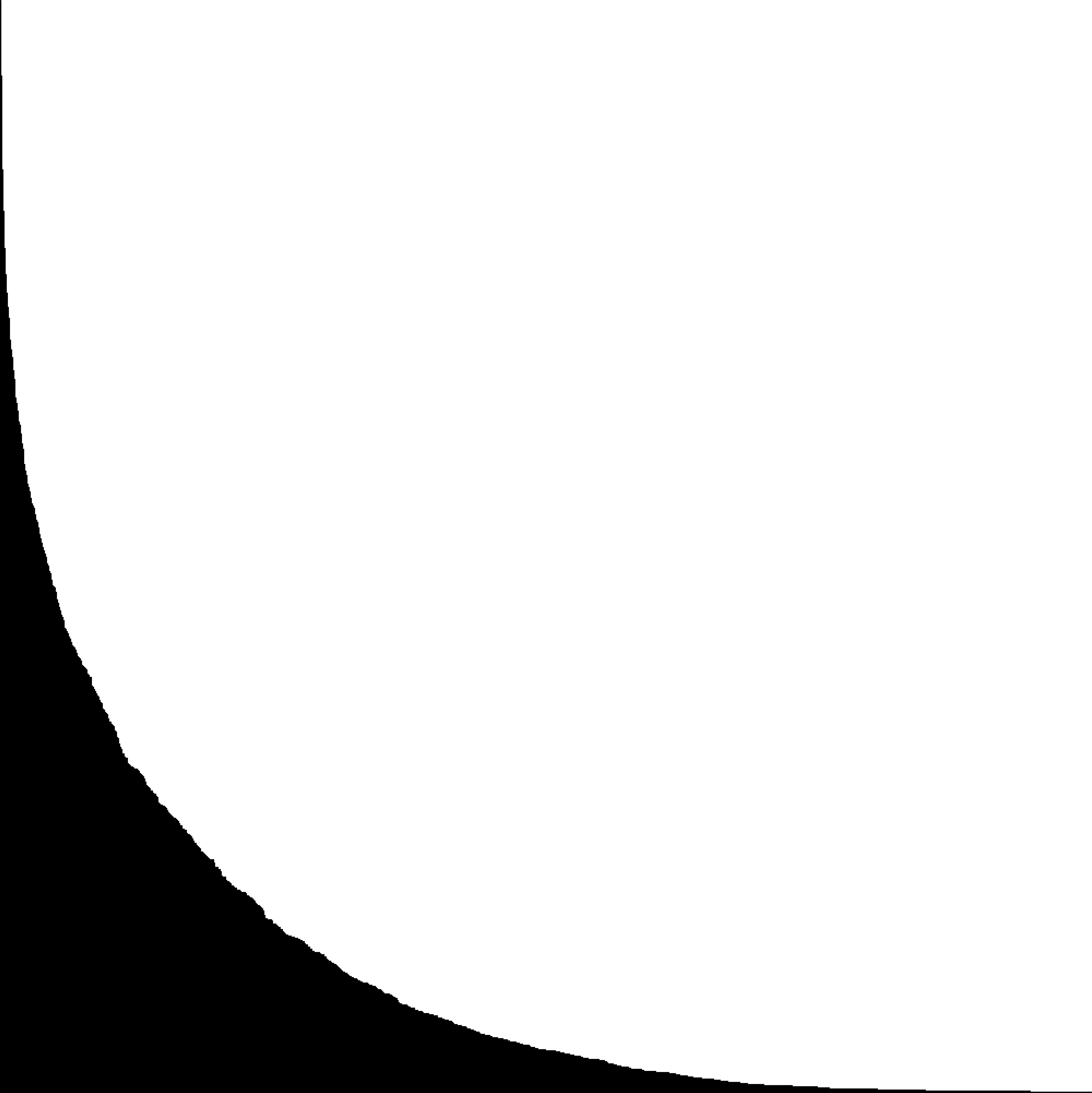} \hfill
    \includegraphics[width=0.3 \textwidth]{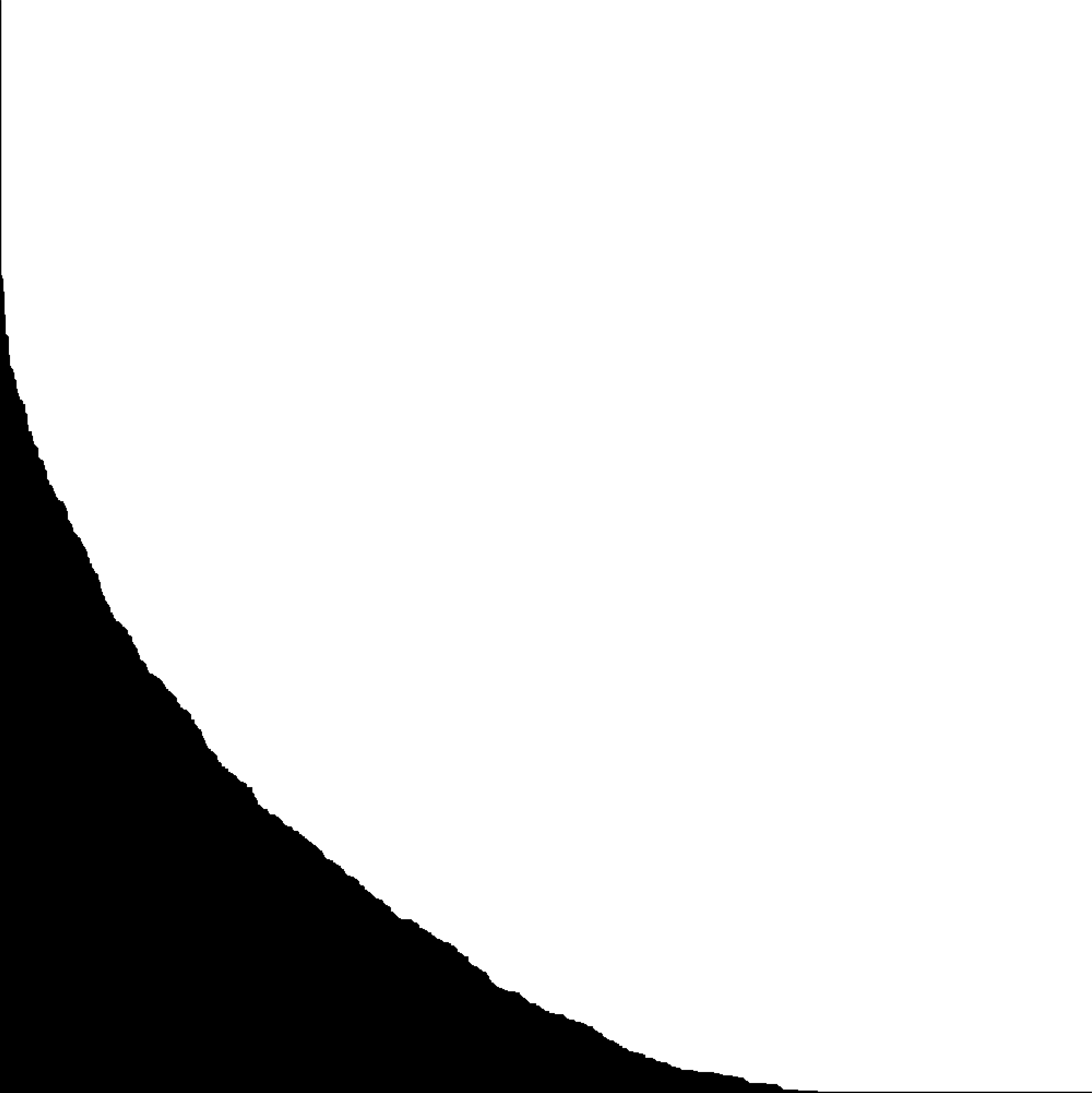} \hfill
    \includegraphics[width=0.3 \textwidth]{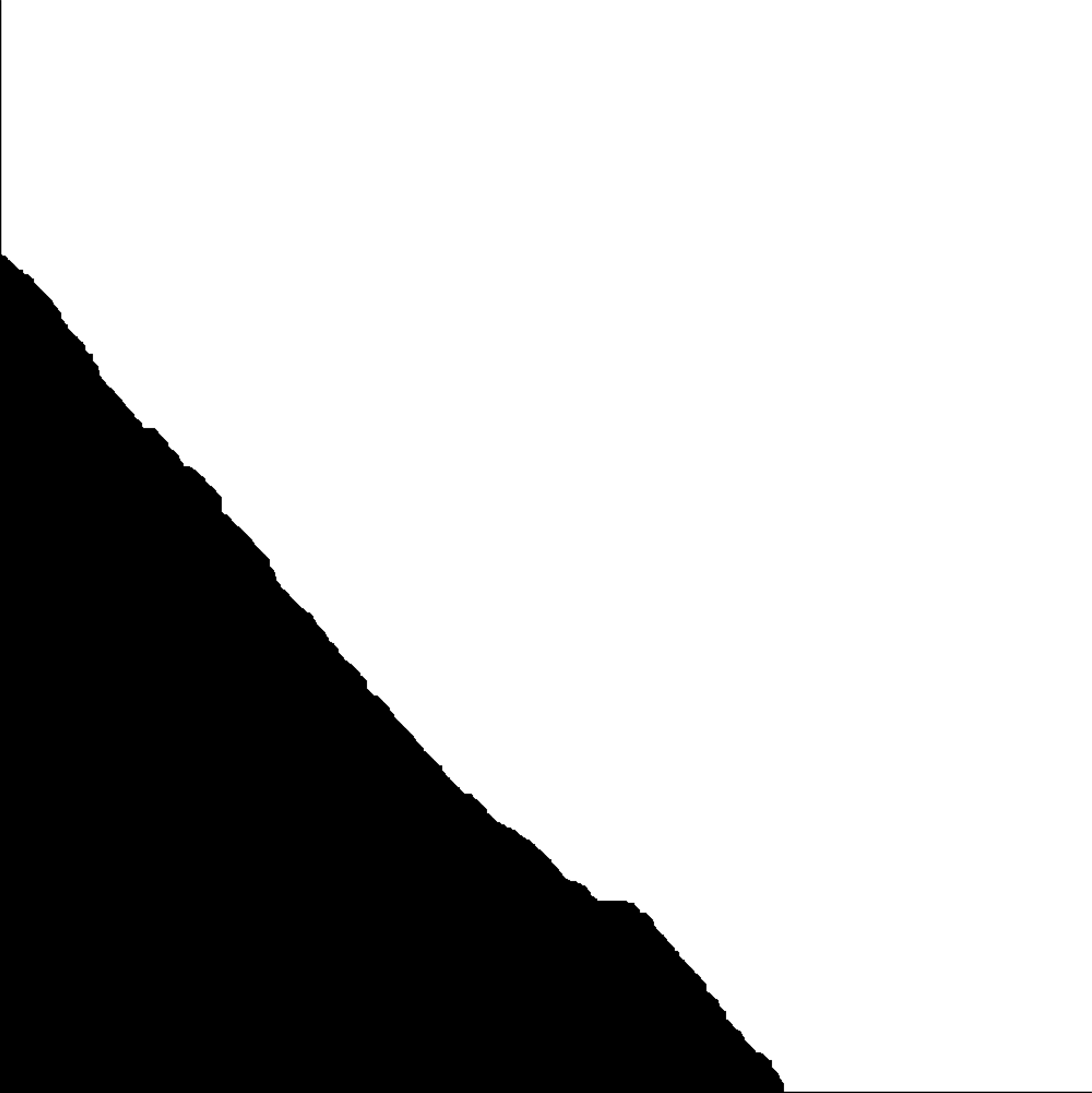}
  \end{center}
  \caption{Simulation of the GLPP on the quarter-plane under integrability condition when, from left to right, $\mu_0$ is a Poisson law of parameter $1$ (left), geometrical law of parameter $0.75$ (middle) and Zeta law of parameter $6$ (right). The growth model is represented at time $1000$ in a $1000 \times 1000$ box.} \label{fig:simu}
\end{figure}

We can remark that all the three lines seem asymptotically more or less concave. When $\mu_0$ is a Poisson law, it is easy to prove the left size in~\C~\ref{cond:subadd}, we try to check the right size, but it's still open. When $\mu_0$ is a Zeta law, the line seems to be straight. Moreover, when $\mu_0$ is a Zeta law, the left size in~\C~\ref{cond:subadd} does not hold.

\section{Open questions} \label{sec:open}
To conclude this article, we would like to give some interesting directions and open questions about this new model of LPP.

\begin{itemize}
\item The first one is to determine the asymptotic of $\nu_L$ when $L \to \infty$, firstly when the model is integrable and, maybe after, for any parameter $\mu \in \mprob{\NN^*}^\NN$. This could be interesting to know if these GLPP converge all to the Brownian bridges (as we can deduce from~Propositions~\ref{prop:clasCyl} and~\ref{prop:clasCylR} for integrable LPP on the cylinders) or not.
  
\item The second one is to determine the asymptotic shapes of the front line when we study integrable GLPP on the quarter-plane. That is done when $\mu_0$ is an exponential law or a geometrical law~\cite{Rost81,CEP96}. But we could ask what happens for any other values of $\mu_0$. In Figure~\ref{fig:simu}, we simulate the case where $\mu_0$ is a Poisson law and when $\mu_0$ is a Zeta law. In the Zeta law case, the asymptotic shape seems to be a straight line.
    
\item Another interesting question is to ask about the invariant laws invariant by translation when we consider the LPP on the half-plane. For now, we can describe, as said in Remark~\ref{rem:alpha}, some of them, that are parameterised by $\alpha \in \RR_+^*$. Probably, they are the only ones, but we are not able to prove it due to a lack of ergodicity results. So, more works should be done about ergodicity of PCA or just about ergodicity of these models of GLPP on the half-plane. 
  
\item Moreover, this kind of generalisation could be done for the directed First Passage Percolation, we just need to replace the $\max$ by the $\min$ in the definition of PCA. Unfortunately, none of these new models is integrable. Also, if we use PCA of memory 2 (see~\cite{CM19}) we can model First Passage Percolation on the triangular lattice and can probably define some new and interesting generalisations, but none of them could be integrable via our methods.

\item Finally, what we have done could be done maybe for other functions $f(a,b)$ different of $f(a,b) = |a-b|$. The approach should not be too different, but we are not sure about the physical interest and meaning of doing it.
\end{itemize}

\section*{Acknowledgement}
I am very grateful to Nathana\"el Enriquez, Jean-François Marckert and Irène Marcovici. Their comments and suggestions have been of great benefit. I am also very grateful to the Laboratoire Mathématiques d'Orsay for the support and supply.

\bibliographystyle{plain}
\bibliography{aaa}

\section{Annex} \label{sec:ann-LPP}
In this annex, we give a proof of Proposition~\ref{prop:clasCyl} as a corollary of Theorem~\ref{thm:int}, i.e.\  we compute $\sum_{t \in \Time_b} W_{(b,t)}$ as given in~\eqref{eq:wbt-usimple} for the LPP case.

\begin{proof}[Proof of Proposition~\ref{prop:clasCyl}]
  Because~\C~\ref{cond:conv} holds with $\alpha = 1-p$, we know that the invariant probability measure $\nu_L$ is unique.\par
  First, we compute $\tilde{\nu}_L$ that is, according to a multiplicative constant,
\begin{align*}
  W_{(b,t)} & = \left( \prod_{i: b_i=b_{i+1}} \sqrt{p (1-p)^{|t_{i+1}-t_i|-1}} \right) \left( \prod_{i: b_i=1,b_{i+1}=-1} \left(\sum_{s \geq 1} \sqrt{p (1-p)^{s+t_i-1}} \sqrt{p (1-p)^{s+t_{i+1}-1}} \right) \right) \\
            & = p^{L} (1-p)^{-L}\left( \prod_{i: b_i = b_{i+1}} \sqrt{1-p}^{|t_{i+1}-t_i|} \right) \left( \prod_{i: b_i=1,b_{i+1}=-1} \sqrt{1-p}^{t_i+t_{i+1}} \sum_{s \geq 1} (1-p)^s \right)\\
            & = \left( \frac{p}{1-p} \right)^{L} \left( \prod_{i: b_i = b_{i+1}} \sqrt{1-p}^{|t_{i+1}-t_i|} \right) \left( \prod_{i: b_i=1,b_{i+1}=-1} \sqrt{1-p}^{t_i+t_{i+1}} \frac{1-p}{p} \right)\\
            & = \left( \frac{p}{1-p} \right)^{L} \left(\frac{1-p}{p} \right)^{\card{i: b_i=1,b_{i+1}=-1}} \\
            & \quad  \left( \prod_{i: b_i = b_{i+1}} \sqrt{1-p}^{|t_{i+1}-t_i|} \right) \left( \prod_{i: b_i=1,b_{i+1}=-1} \sqrt{1-p}^{t_i+t_{i+1}} \right).
\end{align*}
Now, for any $b \in \Bridges_L$, to find $\nu_L(b)$, we have to sum on $t = (t_i) \in \Time_b$. Before to do it, let us introduce some few notations about the local maxima and minima of bridges. First, for any bridge $b$, we say that $i+1/2$ (shorted in $i$ in the following) is a local minimum if $b_i=-1$ and $b_{i+1}=1$ and a local maximum if $b_i=1$ and $b_{i+1}=-1$. Because $b$ is a bridge, the number $k_b$ (rewritten $k$ when confusion on $b$ could not occur) of local maxima is equal to the number of local minima
\begin{equation}
k_b = \card{i: b_i=-1,b_i=1} = \card{i: b_i=1,b_i=-1}.
\end{equation}
In addition, we denote by $(m_1,\dots,m_k)$ the sequence of the positions of local min and $(M_1,\dots,M_k)$ the sequence of the position of local max such that $m_1 < M_1 < m_2 < \dots < M_k < m_1 +2L$. In those terms, $W_{(b,t)}$ rewrites as
\begin{align*}
  W_{(b,t)} = & \left(\frac{p}{1-p} \right)^{L} \left(\frac{1-p}{p}\right)^{k} \prod_{l=1}^k \sqrt{1-p}^{t_{M_l} - t_{m_l+1}} \sqrt{1-p}^{t_{M_l+1} - t_{m_{l+1}}} \prod_{l=1}^k \sqrt{1-p}^{t_{M_l}+t_{M_l+1}}\\
  = & \left(\frac{p}{1-p} \right)^{L} \left(\frac{1-p}{p}\right)^{k} \prod_{l=1}^k (1-p)^{t_{M_l}} (1-p)^{t_{M_{l}+1}} (1-p)^{-t_{m_l}}.  
\end{align*}
To write the last line, we use the fact that $t_{m_l+1} = t_{m_l}$. 
Now, $W_{(b,t)}$ depends only on values of $t$ around local minima and local maxima. But, be careful, we have constraints on them induced by the constraints given in $\Time_b$, see~\eqref{eq:timeset}. They are, for any $l \in \ZZ/k\ZZ$,
\begin{enumerate}
\item $t_{m_l} = t_{m_l+1}$,
\item $t_{M_l} \geq t_{m_l+1} + (M_l-m_l-1)$ if $M_l >  m_l + 1$,
\item $t_{M_l} = t_{m_l+1}$ if $M_l = m_l + 1$ (i.e.\ $b_{m_l+2} = -1$), 
\item $t_{M_l+1} \geq t_{m_{l+1}} + (m_{l+1}-M_l-1)$ if $m_{l+1} > M_l +1$,
\item $t_{M_l+1} = t_{m_{l+1}}$ if $M_l +1 = m_{l+1}$ (i.e.\ $b_{m_{l+1}-1} = 1$). 
\end{enumerate}
Now, we sum on $(t_i)$ that are not local extrema. In the following, we consider $M_l$ and $m_{l}+1$, but the reasoning is the same for $M_{l}+1$ and $m_{l+1}$. For that, we need to enumerate the number of increasing sequences of length $M_l-m_l$ in $[t_{M_l},t_{m_l+1}] \cap \ZZ$:
\begin{itemize}
\item if $M_l-m_l= 1$, then we need that $t_{M_l} = t_{m_l+1}$ that is the constraint 3,
\item if $M_l-m_l=2$, then we have $1$ sequence that is $(t_{m_l+1},t_{M_l})$,
\item if $M_l-m_l=k$ (with $k \geq 3$), then we have $\binom{t_{M_l}-t_{m_l+1}-1}{k-2} = \binom{t_{M_l}-t_{m_l+1}-1}{M_l-m_l-2}$ sequences, indeed we just have to choose $k-2$ numbers in the set $\{t_{m_l+1}+1,\dots,t_{M_l}-1\}$ of cardinal $t_{M_l}-t_{m_l+1}-1$. Note that $\binom{t_{M_l}-t_{m_l+1}-1}{M_l-m_l-2}= 1$ if $M_l-n_l=2$.
\end{itemize}
Now we sum on $(t_i)$ such that $i \notin \mathcal{E} = \{M_1,M_1+1,\dots,M_{k},M_{k}+1,m_1,m_1+1,\dots,m_k,m_k+1\}$,
\begin{align*}
  \sum_{t_i: i \notin \mathcal{E}} W_{(b,t)} = & \left(\frac{p}{1-p} \right)^{L} \left(\frac{1-p}{p}\right)^{k} \prod_{l=1}^k (1-p)^{-t_{m_l}} \left(\ind{M_l=m_l+1} (1-p)^{t_{M_l}} + \ind{M_l>m_l+1} \binom{t_{M_l}-t_{m_l+1}-1}{M_l-m_l-2} (1-p)^{t_{M_l}} \right)\\
  & \qquad \left(\ind{m_{l+1} = M_l+1} (1-p)^{t_{M_l+1}} + \ind{m_{l+1}> M_l+1} \binom{t_{M_{l}+1}-t_{m_{l+1}}-1}{m_{l+1}-M_{l}-2} (1-p)^{t_{M_l+1}}\right). 
\end{align*}

Now, we sum on $(t_{M_l})_{1 \leq l \leq k}$ and $(t_{M_l+1})_{1 \leq l \leq k}$, for that, we need the following lemma:
\begin{lemma} \label{lem:comb}
  For any $k \in \NN^*$, for any $q \in (0,1)$,
  \begin{equation}
    \sum_{i=k}^\infty \binom{i-1}{k-1} q^i = \left( \frac{q}{1-q} \right)^k.
  \end{equation}
\end{lemma}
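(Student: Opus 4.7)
The plan is to prove the identity by a combinatorial/generating-function argument, which is the quickest and most transparent route. The key observation is that $\binom{i-1}{k-1}$ equals the number of compositions of $i$ into $k$ positive parts, i.e.\ the number of tuples $(a_1,\dots,a_k) \in (\NN^*)^k$ with $a_1+\cdots+a_k = i$. This is exactly the same combinatorial content that appears in the counting of increasing sequences earlier in the proof of Proposition~\ref{prop:clasCyl}, so the identity fits naturally with the rest of the computation.

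With this in mind, I would rewrite the left-hand side as
\begin{equation*}
\sum_{i=k}^\infty \binom{i-1}{k-1} q^i
= \sum_{i \geq k}\ \sum_{\substack{(a_1,\dots,a_k) \in (\NN^*)^k \\ a_1+\cdots+a_k = i}} q^{a_1+\cdots+a_k}
= \sum_{(a_1,\dots,a_k) \in (\NN^*)^k} q^{a_1} q^{a_2} \cdots q^{a_k},
\end{equation*}
and then factorise this multiple sum into a product of $k$ identical geometric series, giving
\begin{equation*}
\sum_{i=k}^\infty \binom{i-1}{k-1} q^i = \left( \sum_{a \geq 1} q^a \right)^k = \left( \frac{q}{1-q}\right)^k,
\end{equation*}
which is the desired identity. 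The convergence of the geometric series (and therefore of the double sum, by Fubini applied to non-negative terms) is guaranteed since $q \in (0,1)$.

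There is no real obstacle here: the only thing to verify is the combinatorial identification $\binom{i-1}{k-1} = \#\{(a_1,\dots,a_k) \in (\NN^*)^k : \sum a_j = i\}$, which is the standard stars-and-bars count (equivalently, the number of ways to place $k-1$ separators among the $i-1$ gaps between the $i$ units). If one prefers a purely analytic argument, one can instead start from the well-known formula $\sum_{j \geq 0} \binom{j+k-1}{k-1} q^j = (1-q)^{-k}$ (obtained by differentiating $\sum_{j \geq 0} q^j = (1-q)^{-1}$ a total of $k-1$ times and dividing by $(k-1)!$) and then perform the index shift $i = j+k$ to recover the same result. Either route is short and requires no estimate beyond absolute convergence.
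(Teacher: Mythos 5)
Your proof is correct, but it takes a genuinely different route from the paper. The paper proceeds by induction on $k$: the base case is the geometric series, and the inductive step writes $\bigl(q/(1-q)\bigr)^{k+1}$ as a Cauchy product of $\sum_j q^j$ with $\sum_i \binom{i-1}{k-1} q^i$, then collapses the resulting double sum using the auxiliary hockey-stick identity $\sum_{j=k}^{n}\binom{j}{k}=\binom{n+1}{k+1}$ (Lemma~\ref{lem:sumcomb}), which the paper proves separately by another induction. Your argument instead interprets $\binom{i-1}{k-1}$ as the number of compositions of $i$ into $k$ positive parts, rewrites the series as a $k$-fold sum over $(\NN^*)^k$, and factorises it into $\bigl(\sum_{a\geq 1}q^a\bigr)^k$ in one line. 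Your route is shorter and self-contained: it eliminates both inductions and the need for Lemma~\ref{lem:sumcomb} entirely, and the stars-and-bars identification you invoke is exactly the same combinatorial fact already used in the body of the proof of Proposition~\ref{prop:clasCyl} to enumerate increasing integer sequences, so it integrates naturally. The paper's inductive argument has the minor advantage of being purely algebraic (no combinatorial model needed), but your version makes the structural reason for the identity visible and is the one most readers would find easier to verify. Your appeal to Tonelli for the rearrangement of a nonnegative series is the right justification. The alternative analytic derivation you sketch (differentiating the geometric series $k-1$ times and reindexing) is also valid and equivalent to the standard negative-binomial expansion.
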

\begin{proof}
  The proof is done in Section~\ref{sec:lem-comb} just after this one.\par
\end{proof}

And, so, if $M_l > m_l+1$,
\begin{align*}
  \sum_{t_{M_l} = t_{m_l+1} + M_l-m_l-1}^{\infty} \binom{t_{M_l}-t_{m_l+1}-1}{M_l-m_l-2} (1-p)^{t_{M_l}}
  = & \sum_{u = M_l-m_l-1}^{\infty} \binom{u-1}{M_l-m_l-2} (1-p)^{u+t_{m_l+1}}\\
  = & \left(\frac{1-p}{p} \right)^{M_l-m_l-1} (1-p)^{t_{m_l+1}}.
\end{align*}
Note that if $M_l=m_l+1$, we obtain $\left(1-p\right)^{t_{m_l+1}}= \left(1-p\right)^{t_{M_L}}$. So, after the sum on $(t_{M_l})_{1 \leq l \leq k}$ and $(t_{M_l+1})_{1 \leq l \leq k}$, we obtain
 \begin{align*}
   \sum_{t_i: i \notin \{m_l: 1 \leq l \leq k\}} W_{(b,t)} = & \left(\frac{p}{1-p} \right)^{L} \left(\frac{1-p}{p}\right)^{k} \\
                                                             & \quad \prod_{l=1}^k (1-p)^{-t_{m_l}} \left(\frac{1-p}{p} \right)^{M_l-m_l-1} (1-p)^{t_{m_{l+1}}}  \left(\frac{1-p}{p} \right)^{m_{l+1}-M_l-1} (1-p)^{t_{m_{l+1}}}.
 \end{align*}
 We recall once again that $t_{m_l} = t_{m_l+1}$, so
 \begin{align*}
   \sum_{t_i: i \notin \{m_l: 1 \leq l \leq k\}} W_{(b,t)} = & \left(\frac{p}{1-p} \right)^{L} \left(\frac{1-p}{p}\right)^{k} \prod_{l=1}^k (1-p)^{t_{m_l}} \left( \frac{1-p}{p} \right)^{m_{l+1}-m_l-2}\\
   = &  \left(\frac{p}{1-p} \right)^{L} \left(\frac{1-p}{p}\right)^{k} \left(\frac{1-p}{p}\right)^{-2k} \left(\frac{1-p}{p} \right)^{(m_1+2L - m_k) + (m_k -m_{k-1}) + \dots + (m_2 - m_1) }  \prod_{l=1}^k (1-p)^{t_{m_l}}\\
   = &  \left(\frac{p}{1-p} \right)^{L} \left(\frac{1-p}{p}\right)^{k} \left(\frac{1-p}{p}\right)^{-2k} \left(\frac{1-p}{p} \right)^{2L}  \prod_{l=1}^k (1-p)^{t_{m_l}}\\
 \end{align*}

 Now, we sum on $(t_{m_l})_{1 \leq l \leq k}$ to find
 \begin{equation}
   \nu_L(b) = \frac{1}{Z} \left( \frac{p}{1-p} \right)^k \left( \frac{1}{p} \right)^k = \frac{1}{(1-p)^k}.  \qedhere
 \end{equation} 
\end{proof}

\subsection{Proof of Lemma~\ref{lem:comb}} \label{sec:lem-comb}
  In this annex, we prove Lemma~\ref{lem:comb}. First, we need to prove the following lemma on sums of binomials.
\begin{lemma} \label{lem:sumcomb}
  For any $n,k \in \NN$ such that $k \leq n$, 
  \begin{equation}
    \sum_{j=k}^{n} \binom{j}{k} = \binom{n+1}{k+1}.
  \end{equation}
\end{lemma}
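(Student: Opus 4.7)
The plan is to prove this classical hockey-stick identity by induction on $n$, with $k$ fixed. The statement is symmetric in character to Pascal's rule $\binom{m+1}{k+1} = \binom{m}{k+1} + \binom{m}{k}$, so induction should collapse everything cleanly.

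For the base case $n = k$, the left-hand side is $\binom{k}{k} = 1$ and the right-hand side is $\binom{k+1}{k+1} = 1$, so the equality holds trivially. For the inductive step, assume
$$\sum_{j=k}^{n} \binom{j}{k} = \binom{n+1}{k+1}$$
for some $n \geq k$, and consider the sum up to $n+1$. Splitting off the last term and invoking the inductive hypothesis gives
$$\sum_{j=k}^{n+1} \binom{j}{k} = \binom{n+1}{k+1} + \binom{n+1}{k},$$
and Pascal's rule applied to the right-hand side yields $\binom{n+2}{k+1}$, which closes the induction.

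Alternatively, a one-line combinatorial argument would work: the right-hand side counts $(k+1)$-element subsets of $\{0,1,\dots,n\}$, and partitioning such subsets according to the value $j$ of their maximum element (which ranges over $\{k,\dots,n\}$) gives $\binom{j}{k}$ choices for the remaining $k$ elements from $\{0,\dots,j-1\}$. Either route is short; I would present the inductive proof since it is self-contained and mechanical. There is no real obstacle here — the identity is standard, and the only thing to be careful about is the boundary behaviour at $j = k$, which is handled automatically since $\binom{k}{k} = 1$.
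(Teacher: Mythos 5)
Your proof is correct and takes essentially the same approach as the paper: induction on $n$ with Pascal's rule closing the inductive step. The only cosmetic difference is that you fix $k$ and induct from the base case $n=k$, whereas the paper runs a single induction on $n$ that covers all $k\le n$ simultaneously (treating $k=n+1$ as a fresh base case at each step); your organisation is arguably cleaner. The alternative combinatorial argument you sketch (classifying $(k+1)$-subsets of $\{0,\dots,n\}$ by their maximum) is a genuinely different and equally valid route, but since you chose to present the inductive proof, the substance matches the paper.
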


 \begin{proof}
   It is proved by induction on $n$. If $n=0$ (and so $k=0$), it is $\binom{0}{0} = 1 = \binom{1}{1}$. Now, take $n \geq 0$. If $k=n+1$, it is $\binom{n+1}{n+1} = \binom{n+2}{n+2}$. Now, take any $k \leq n$, by induction hypothesis and Pascal's rule, 
   \begin{equation}
     \sum_{j=k}^{n+1} \binom{j}{k} = \sum_{j=k}^n \binom{j}{k} + \binom{n+1}{k} = \binom{n+1}{k+1} + \binom{n+1}{k} = \binom{n+2}{k+1}. \qedhere 
   \end{equation}
 \end{proof}

 Now Lemma~\ref{lem:comb} is a corollary of Lemma~\ref{lem:sumcomb}.
\begin{proof}[Proof of Lemma~\ref{lem:comb}]
  It is proved by induction on $k$. If $k=1$, the sum is
  \begin{equation}
    \sum_{i=1}^\infty \binom{i-1}{0} q^i = \sum_{i=1}^\infty q^i = \frac{q}{1-q}.
  \end{equation}
  
  Now, we suppose that $k \geq 1$, then, by induction and Lemma~\ref{lem:sumcomb},
  \begin{align}
    \left( \frac{q}{1-q} \right)^{k+1}
    = & \left( \sum_{j=1}^\infty q^j \right) \left(\sum_{i=k}^\infty \binom{i-1}{k-1} q^i \right) 
        = \sum_{i=k}^\infty \sum_{j=1}^\infty \binom{i-1}{k-1} q^{i+j} \\
    = & \sum_{l=k+1}^\infty q^{l} \left(\sum_{m=k}^{l-1} \binom{m-1}{k-1} \right)
        = \sum_{l=k+1}^\infty q^{l} \binom{l-1}{k}. \qedhere
  \end{align}
\end{proof}

\end{document}